\newcommand{\q}[1]{``#1''}
\newcommand{\Sp}{{Spin$(7)$}} 
\theoremstyle{plain}
\newtheorem{thm}{Theorem}[section]
\newtheorem{lem}[thm]{Lemma}
\newtheorem{cor}[thm]{Corollary}
\newtheorem{prop}[thm]{Proposition}
\newtheorem*{thm*}{Theorem}
\theoremstyle{definition}
\newtheorem{defn}[thm]{Definition}
\newtheorem{remark}[thm]{Remark}
\newtheorem{example}[thm]{Example}
\title{Explicit solutions to the gradient flow of Spin(7)-structures}
\author{Joseph Duthie\footnote{Mathematical Institute, University of Oxford, Oxford OX2 6GG, United Kingdom\\
Email: joseph.duthie@maths.ox.ac.uk, ORCID: \href{https://orcid.org/0009-0008-2661-3573}{\color{black}{0009-0008-2661-3573}}}}
\date{21 November, 2025}
\begin{document}

\maketitle
\begin{abstract}
    We study the gradient flow of Spin($7$)-structures and construct the first explicit solutions, in the homogeneous setting. As an intermediate step, we obtain formulae expressing the Spin($7$)-torsion tensor and  gradient flow in terms of the Spin($7$)-torsion forms, which makes explicit computations more tractable. We use these formulae to find  explicit solutions to the gradient flow of Spin($7$)-structures, obtaining a shrinking soliton on $\mathrm{SU}(3)$ as well as another explicit solution on a certain $T^7$-bundle over $S^1$. We also find an explicit solution to the coupled Ricci-harmonic flow of Spin($7$)-structures. Finally, we consider the question of stability of solitons for the renormalised gradient flow, and show that the soliton on $\mathrm{SU}(3)$ admits stable directions, unstable directions, and zero modes.
\end{abstract}

{\hypersetup{linkcolor=black}
\tableofcontents
}

\section{Introduction}

In 1955, Berger classified which groups could possibly arise as holonomy groups of simply connected, irreducible, non-symmetric Riemannian manifolds \cite{Berger}. The question of which of these groups actually do arise as holonomy groups, namely all of them except Spin($9$), took longer to resolve. Two of the most interesting holonomy groups are the so-called \emph{exceptional holonomy groups}, $G_2$ and \Sp, which can only arise for Riemannian manifolds of dimensions $7$ and $8$ respectively. Here, we focus on the case of \Sp. Manifolds with holonomy Spin($7$) enjoy many noteworthy geometric properties. In particular, they are Ricci-flat, and more generally, manifolds with special holonomy currently provide the only non-trivial examples of compact Ricci-flat manifolds. The construction of manifolds with holonomy Spin($7$) (and $G_2$) proved to be a very challenging problem. The non-compact case was settled by Bryant in 1987 \cite{Bryant87}, and the first complete examples were constructed by Bryant-Salamon in 1989 \cite{BryantSalamon}. Finally, the compact case was resolved by Joyce in 1996 \cite{Joyce1996}.
There are now several more constructions, but they remain very rare.

More abundant than \Sp-manifolds are manifolds with \Sp-structures. A manifold $M$ admitting a \Sp-structure is equivalent to having a globally-defined $4$-form $\Phi$ of a particular type, and $M$ has holonomy contained in Spin($7$) if and only if $\Phi$ is parallel with respect to the Levi-Civita connection, in which case $\Phi$ is called \emph{torsion-free}.
The question of when an arbitrary \Sp-structure can be deformed into a torsion-free one is very difficult. At present, the only method in the compact case is due to Joyce \cite{Joyce1996} \cite{Joyce1999}, who developed a powerful perturbation technique for \Sp-structures with small torsion on certain manifolds obtained by resolving singularities on orbifolds of the form $T^8/\Gamma$ and Calabi-Yau-$4$-orbifolds.

Motivated by the successes of geometric flows in other areas of geometry, perhaps most notably in the resolution of the Poincaré conjecture by Perelman \cite{perelman2002entropyformularicciflow} \cite{perelman2003ricciflowsurgerythreemanifolds}, it is natural to approach the problem of finding manifolds with special holonomy via suitable geometric flows. In the $G_2$-setting, Bryant proposed the so-called \emph{Laplacian flow} \cite{BryantRemarks}, aiming to flow a closed $G_2$-structure towards a torsion-free one. The Laplacian flow has received significant attention, but there was until very recently a notable lack of study of flows of \Sp-structures. In \cite{Dwivedi24}, Dwivedi introduced the \emph{gradient flow of} \Sp-\emph{structures}, defined as the gradient flow of the $L^2$-norm of the torsion of a \Sp-structure, and whose critical points are precisely torsion-free \Sp-structures. 
\begin{defn}[\cite{Dwivedi24}]
    Let $(M,\Phi_0)$ be a compact $8$-manifold with initial Spin($7$)-structure $\Phi_0$. The gradient flow of \Sp-structures is the following initial value problem:

    \begin{equation}\label{GF}
            \begin{cases}
                 \frac{\partial}{\partial t} \Phi(t) = (-\operatorname{Ric} + 2(\mathcal{L}_{T_8}g) + T\star T - |T|^2g + 2 \text{ div}T)_t \diamond_t \Phi(t),\\
                 \Phi(0) = \Phi_0, \\
                 
            \end{cases} \tag{GF}
        \end{equation}
        where $T = T_{\Phi_t}$ is the torsion tensor, $T_8$ is a $1$-form component of $T$, $T \star T$ is a certain contraction of $T$ with itself,  $\operatorname{Ric}$ is the Ricci tensor, $|T|^2$ is the norm of $T$ taken with respect to the metric $g = g_{\Phi_t}$, $\operatorname{div}T_{jk} = g^{nm}\nabla_nT_{m;jk}$ and $\diamond_t$ is the infinitesimal action of $2$-tensors on $4$-forms, depending on $\Phi_t$. 
        
\end{defn}
All of the terms involved in this definition are those induced by the Spin($7$) structure $\Phi_t$, and will be defined in Section \ref{SectionPrelims}. We note that we use the summation convention throughout, and write $g^{ij}$ for the inverse of $g_{ij}$.

Currently, very little is known about this flow, with the only results being short-time existence and uniqueness and the non-existence of compact expanding solitons \cite{Dwivedi24}. A flow equivalent to this one in dimension $8$ but which exists in all dimensions has been studied from the spinorial point of view \cite{AmmannWeissWitt}, where some stability results have also been obtained \cite{schiemanowski2017stabilityspinorflow}.
However, there were no known explicit solutions and no knowledge of compact shrinking solitons. Part of the obstruction to obtaining such results was the difficulty in computing the terms of the flow equation above in explicit examples, and we address this issue in Section \ref{SectionReformulation}. 

\bigskip

\noindent\textbf{Outline of paper.}
In this paper, after recalling some preliminary information on \Sp-structures in Section \ref{SectionPrelims}, we express the torsion tensor $T$ in terms of the torsion forms $T^1_8$ and $T^5_{48}$, which are easier to compute. We also introduce the geometric flows of interest here. Then, in Section \ref{SectionReformulation}, we use a representation theoretic argument to express the terms of the gradient flow equation in terms of these torsion forms (Proposition \ref{PropReformulation}), resulting in the following.
\begin{thm*}[\ref{ThmTorsionForms}]
    Writing the gradient flow of \Sp-structures \ref{GF} as 
    \[
    \frac{\partial}{\partial t}\Phi_t = A_t \diamond_t \Phi,
    \]
    we can express $A$ as
    \begin{equation}
    \begin{split}
        A_t& = \left( -\frac{3}{16}\delta T^1_{8} - \frac{5}{32}| T^1_{8}|^2 +  \frac{1}{28}| T^5_{48}|^2\right)g_\Phi \\
        &+\mathsf{j} \left(  -\frac{1}{2}\delta(T^1_8 \wedge \Phi) -4\delta T^5_{48}   -\frac{3}{2}(T^1_8 \wedge *_\Phi T^5_{48}) -\frac{3}{8}*_\Phi (T^1_8 \wedge \Phi) \wedge T^1_8\right) \\
        & -\frac{1}{2}g_\Phi(\cdot \lrcorner *_\Phi T^5_{48},\cdot \lrcorner *_\Phi T^5_{48} ) \\
        &+\frac{7}{6}\pi_7(\mathrm{d}T^1_8) +\frac{7}{12}\pi_7(*_{\Phi}(T^1_8 \wedge T^5_{48})).
    \end{split}         
    \end{equation}
    where $\delta = -*_{\Phi_t}d*_{\Phi_t}$ is the codifferential of forms, $T^1_8$ and $T^5_{48}$ are the torsion forms, $\pi_7$ is the projection $\pi_7: \Omega^2 \rightarrow\Omega^2_7$ and $\mathsf{j}$ is a map from symmetric $2$-tensors to $4$-forms, depending on $\Phi_t$ and defined in Section \ref{SectionReformulation}.
\end{thm*}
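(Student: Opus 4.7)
The plan is to work term-by-term in
$A = -\operatorname{Ric} + 2\mathcal{L}_{T_8}g + T\star T - |T|^2 g + 2\operatorname{div}T$,
substitute the formula for $T$ in terms of the torsion forms $T^1_8$ and $T^5_{48}$ established earlier in the paper, and repackage the result via the Spin$(7)$-irreducible decomposition
\[
\operatorname{End}(TM) \;\cong\; \mathbb{R}\,g \;\oplus\; S^2_0(TM) \;\oplus\; \Omega^2_7 \;\oplus\; \Omega^2_{21}.
\]
The $\diamond$-action on $\Phi$ kills the $\Omega^2_{21}$-stabiliser, while the remaining three summands appear in the statement through the scalar coefficient of $g_\Phi$, the map $\mathsf{j}$, and the projection $\pi_7$. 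This representation-theoretic framework is precisely the content of Proposition~\ref{PropReformulation}, so the theorem reduces to computing the scalar trace, the traceless symmetric part, and the $\Omega^2_7$-part of each of the five summands of $A$ in terms of $T^1_8$ and $T^5_{48}$.

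First I would use the Spin$(7)$-analogue of Karigiannis-type intrinsic-torsion identities to write $\operatorname{Ric}$ as a quadratic expression in the torsion forms plus a linear expression in their first derivatives. Since $T_8$ is (a multiple of) the metric dual of $T^1_8$, the tensor $\mathcal{L}_{T_8}g$ is the symmetrisation of $\nabla T^1_8$, with trace $\delta T^1_8$ and traceless symmetric part arising from $\mathrm{d}T^1_8$ together with quadratic torsion terms. The contraction $T\star T$ is purely algebraic once $T$ is expanded, and Schur's lemma restricts its output to the pieces $|T^1_8|^2 g$, $|T^5_{48}|^2 g$, the traceless symmetric $(T^5_{48})^2$-tensor $g_\Phi(\cdot\lrcorner *_\Phi T^5_{48},\cdot\lrcorner *_\Phi T^5_{48})$, the mixed symmetric terms involving $T^1_8 \wedge *_\Phi T^5_{48}$ and $*_\Phi(T^1_8\wedge\Phi)\wedge T^1_8$, and the antisymmetric $\pi_7(*_\Phi(T^1_8\wedge T^5_{48}))$. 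Finally, the symmetric part of $\operatorname{div}T$ reduces to $\delta(T^1_8\wedge\Phi)$ and $\delta T^5_{48}$ via the Spin$(7)$-Bianchi identity, while its antisymmetric part contributes $\pi_7(\mathrm{d}T^1_8)$.

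Collecting by representation, the scalars combine, after invoking the Spin$(7)$ scalar curvature formula, into the coefficients $-3/16,\,-5/32,\,1/28$ of $g_\Phi$; the traceless-symmetric contributions are assembled, via $\mathsf{j}$, into the 4-form in the second line; the quadratic $(T^5_{48})^2$ tensor is most natural as a rank-$2$ object and appears directly in the third line; and the $\Omega^2_7$-projections yield the last line with coefficients $7/6$ and $7/12$. The main obstacle is pinning down the numerical prefactors: the representation-theoretic framework fixes the \emph{shape} of each contribution only up to rational constants in each isotypic piece, and these constants must be determined either by a direct computation in a Spin$(7)$-adapted orthonormal frame or by testing on Spin$(7)$-structures with only one of $T^1_8, T^5_{48}$ switched on. The mixed-torsion quadratic couplings and the normalisations of $\pi_7$ are the trickiest points; I would cross-check them against known Spin$(7)$ identities for scalar curvature and codifferentials of $T^1_8\wedge\Phi$, and compare signs and prefactors with the $G_2$-analogues available in the literature.
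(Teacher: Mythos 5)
Your overall strategy matches the paper's: use representation theory (the basis of Spin$(7)$-invariant endomorphisms listed in Proposition~\ref{PropReformulation}'s setup) to pin down the \emph{shape} of each term, and then fix the unknown rational coefficients by computing on concrete examples. This is exactly what the paper does: Theorem~\ref{ThmTorsionForms} is obtained by adding Fowdar's torsion-form expression for $\operatorname{Ric}$ to the formulas of Proposition~\ref{PropReformulation}, each of which is derived by the Schur-plus-examples method.

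However, your term-by-term analysis contains misattributions that would derail the intermediate bookkeeping. First, $\operatorname{div}T$ lies entirely in $\Omega^2_7$ (this is part of its definition in the paper), so it has \emph{no} symmetric part; your claim that its symmetric part gives $\delta(T^1_8\wedge\Phi)$ and $\delta T^5_{48}$ is wrong. Those two traceless-symmetric second-order terms come from the Ricci tensor (Fowdar's formula), not from $\operatorname{div}T$. Second, $T\star T$ is by construction symmetric in its two free indices, so it cannot produce the $\pi_7(*_\Phi(T^1_8\wedge T^5_{48}))$ piece you attribute to it; that skew term actually arises from $\operatorname{div}T$, alongside $\pi_7(\mathrm{d}T^1_8)$. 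Finally, there is no appeal to a ``Spin$(7)$-Bianchi identity'' in the paper's argument -- the paper simply notes \emph{a priori} vanishings (e.g.\ $|T|^2 g$ has no derivative terms; $\mathcal{L}_{T_8}g$ vanishes when $T^1_8=0$) and then evaluates the remaining unknown coefficients on examples. These misattributions do not contradict the representation-theoretic framework, but if carried out literally they would lead you to look for the wrong isotypic components in the wrong place, so they should be corrected before determining coefficients.
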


This formulation makes explicit computations much more tractable. We also expect this formulation to be of independent interest, and work in progress is to use this to study dimensional reductions of the flow. We use this expression in Section \ref{SectionExamples} to construct the first explicit solutions to the gradient flow of \Sp-structures (Examples \ref{ExampleSU3Flow} and \ref{ExampleOtherSolution}), using homogeneous methods. In particular, we find that the natural bi-invariant \Sp-structure on $\operatorname{SU}(3)$ is a shrinking soliton.
We roughly summarise these results here, combining Examples \ref{ExampleSU3Flow}, \ref{ExampleOtherSolution} and Theorem \ref{ThmSolitonsExist}.

\begin{thm}
    There exist compact shrinking solitons to the gradient flow of \Sp-structures given by \eqref{GF}. More precisely, one shrinking soliton is given by the \Sp-structure on $\operatorname{SU}(3)$ given by $\Phi_t = \frac{1}{4}(2-3t)^2\Phi_0$, described in Example \ref{ExampleSU3Flow}.
Moreover, the one-parameter family of \Sp-structures 
\begin{equation}
\begin{split}
    \Phi_t = &\frac{k^2}{(2t+1)^2}(e^{1234}-e^{1256}-e^{1357}- e^{1467}-e^{3456}+e^{2457}-e^{2367})\\
    &+\frac{k^2}{(2t+1)}(- e^{1278}+e^{1368}-e^{1458}+e^{5678} -e^{3478}- e^{2468}-e^{2358}).
\end{split} 
\end{equation}
on the $T^7$-bundle over $S^1$ described in Example \ref{ExampleOtherSolution} is a solution to the gradient flow of \Sp-structures, along which the $T^7$-fibres shrink uniformly and the $S^1$-base expands.
\end{thm}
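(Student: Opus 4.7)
The statement consolidates three claims: the existence of compact shrinking solitons for \eqref{GF}, the verification that $\Phi_t = \frac{1}{4}(2-3t)^2\Phi_0$ on $\operatorname{SU}(3)$ is such a soliton, and the verification that the displayed one-parameter family on the $T^7$-bundle over $S^1$ solves \eqref{GF}. Once the latter two are established by direct computation, existence is automatic. The reformulated flow $\partial_t\Phi = A \diamond \Phi$ from Theorem~\ref{ThmTorsionForms} is what makes both computations feasible, since it expresses every term on the right-hand side of \eqref{GF} purely in terms of the two torsion forms $T^1_8$ and $T^5_{48}$, which in turn are computable at a single point in each example thanks to homogeneity.

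For the $\operatorname{SU}(3)$ example, I would equip $\operatorname{SU}(3)$ with the bi-invariant metric from minus the Killing form and fix a left-invariant orthonormal coframe $(e^1,\dots,e^8)$ in which the standard admissible 4-form defines a left-invariant \Sp-structure $\Phi_0$. By left-invariance, $T^1_8$ and $T^5_{48}$ are left-invariant forms explicitly computable from the structure constants of $\mathfrak{su}(3)$. Substituting into the expression for $A$ in Theorem~\ref{ThmTorsionForms} and evaluating each term at the identity via left-invariant linear algebra, one verifies that $A_0 \diamond_0 \Phi_0$ is a scalar multiple of $\Phi_0$. The scaling ansatz $\Phi_t = h(t)\Phi_0$ then reduces \eqref{GF} to a single scalar ODE for $h$; tracking the $h$-dependence of the induced metric and of the torsion-form bilinears in $A$, one obtains an ODE whose unique solution with $h(0)=1$ is $h(t) = \frac{1}{4}(2-3t)^2$. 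Since $h(t)\to 0$ as $t\to 2/3$ and $\operatorname{SU}(3)$ is compact, this is a compact shrinking soliton.

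For the $T^7$-bundle over $S^1$, I would fix the coframe and structure equations $de^i$ from Example~\ref{ExampleOtherSolution}, and split the ansatz as $\Phi_t = a(t)\Phi^H + b(t)\Phi^V$, with $a(t) = k^2/(2t+1)^2$, $b(t) = k^2/(2t+1)$, where $\Phi^H$ collects the seven $e^{ijkl}$ with $8 \notin \{i,j,k,l\}$ and $\Phi^V$ the seven terms containing $e^8$. The anisotropic induced metric contracts the $T^7$-fibres faster than it expands the $S^1$-base, and the two different scaling exponents are precisely the reflection of this at the level of $\Phi$. Using the $de^i$ relations, $T^1_8$ and $T^5_{48}$ are explicit polynomials in $(a,b)$ and the coframe; plugging into Theorem~\ref{ThmTorsionForms}, every contribution to $A$ decomposes along the same horizontal/mixed split, so that $A_t \diamond_t \Phi_t = \alpha(a,b)\Phi^H + \beta(a,b)\Phi^V$. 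Matching with $\dot a\Phi^H + \dot b\Phi^V$ reduces \eqref{GF} to two coupled scalar ODEs which $(a,b)$ above indeed solves.

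The main obstacle is computational bookkeeping rather than conceptual difficulty. Each contribution to $A$ involves a codifferential, a wedge product with a torsion form, a Hodge dual or a $\pi_7$-projection, together with the operators $\mathsf{j}$ and $\diamond$, and keeping these consistent throughout the chain of identifications is where almost all the work lies. Theorem~\ref{ThmTorsionForms} is precisely the tool that makes this level of explicit calculation tractable; without the reformulation the original $\operatorname{Ric}$, $\mathcal{L}_{T_8}g$ and $\operatorname{div}T$ terms would be far harder to evaluate in either example.
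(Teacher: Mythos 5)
Your proposal captures the overall strategy and is correct in spirit, but it diverges from the paper on the $\operatorname{SU}(3)$ example in a way worth flagging. You propose substituting the torsion forms $T^1_8, T^5_{48}$ into the reformulated expression for $A$ from Theorem~\ref{ThmTorsionForms}; the paper, however, explicitly declines to do this for $\operatorname{SU}(3)$, noting that the torsion forms there are not of a simple shape (indeed $T^1_8 \neq 0$ and $T^5_{48}$ has many nonzero components), so the reformulation gives no saving. Instead the paper computes the full torsion tensor $T$ directly via Proposition~\ref{PropTorsion}, reads off $\operatorname{Ric}$, $T\star T$, $|T|^2$, $\mathcal{L}_{T_8}g$ and $\operatorname{div}T$ from it (finding $\mathcal{L}_{T_8}g = \operatorname{div}T = 0$ and the other terms proportional to $g$), and obtains $A = -\tfrac{3}{4}g$, whence $A\diamond\Phi = -3\Phi$ and the soliton equation with $Y=0$, $\lambda=-3$. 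Your route would reach the same $A$, but the terms $\mathsf{j}(\delta(T^1_8\wedge\Phi))$, $\mathsf{j}(\delta T^5_{48})$, and the various wedge products are at least as painful to evaluate here as the tensor contractions the paper uses, so it is genuinely less efficient rather than merely a stylistic alternative. For the $T^7$-bundle example your approach matches the paper: there $T^1_8 = 0$, most terms in Theorem~\ref{ThmTorsionForms} vanish, and the horizontal/vertical split you describe (equivalently, the paper's ansatz $e_i \mapsto f(t)e_i$ for $i\neq 8$, $e_8\mapsto h(t)e_8$, giving $\Phi_t = f^4\Phi^H + f^3 h\,\Phi^V$) reduces \eqref{GF} to the two coupled ODEs for $f,h$. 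One small missing step in your $\operatorname{SU}(3)$ sketch: to derive the scalar ODE for $h(t)$ you need the explicit scaling behaviour of the right-hand side under $\Phi\mapsto c^4\Phi$, namely equation~\eqref{EquationRescaledFlow}, which is what turns the ansatz into $4f^3\dot f = -3 f^2$; you gesture at "tracking the $h$-dependence" but the paper makes this rescaling law precise and uses it directly.
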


Having found an explicit example of a compact shrinking soliton to the gradient flow, it is natural to consider the question of whether this soliton is a stable critical point of the (suitably renormalised) flow. Schiemanowski's stability result \cite[Theorem 2]{schiemanowski2017stabilityspinorflow} does not apply here, since our manifold \emph{does} admit Killing fields.
Indeed, in Section \ref{SectionStability}, we obtain the following result.

\begin{thm*}[\ref{ExampleSU3Unstable}]
    The \Sp-soliton constructed in Example \ref{ExampleSU3Flow} is an unstable critical point for the renormalised gradient flow \eqref{EquationRescaledFlow}, modulo rescaling. Moreover, there exist stable directions, unstable directions and zero modes.
\end{thm*}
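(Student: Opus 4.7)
The strategy is to linearise the renormalised flow \eqref{EquationRescaledFlow} at the soliton $\Phi_0$ on $\mathrm{SU}(3)$ and use representation theory to exhibit eigendirections of each sign as well as in the kernel. The reformulation in Theorem~\ref{ThmTorsionForms} is what makes this tractable, since it expresses the right-hand side of the flow as a manageable second-order differential operator in the torsion forms $T^1_8$ and $T^5_{48}$.

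First, I would compute the linearisation at $\Phi_0$. Writing $\Phi = \Phi_0 + \varepsilon h$ with $h$ tangent to the space of \Sp-structures (so that pointwise $h$ lies in the $\mathbf{1} \oplus \mathbf{7} \oplus \mathbf{35}$ \Sp-submodule of $\Omega^4$), I would expand $T^1_8(\Phi)$ and $T^5_{48}(\Phi)$ to first order in $\varepsilon$, substitute into the formula of Theorem~\ref{ThmTorsionForms}, and subtract the scaling mode $h \propto \Phi_0$, which is precisely the direction removed by the renormalisation. The result is a linear operator $L$ on variations whose spectrum governs stability.

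Next, I would exploit the bi-invariance of $\Phi_0$. Since the soliton is bi-invariant, $L$ is $\mathrm{SU}(3) \times \mathrm{SU}(3)$-equivariant for the action by left and right translation on forms. By Peter--Weyl, the space of variations decomposes as a Hilbert direct sum of finite-dimensional isotypic components, and on each component $L$ acts by an equivariant endomorphism whose eigenvalues can be determined by finite linear algebra. In particular, the space of bi-invariant variations is finite-dimensional, and $L$ restricts there to an explicit matrix that can be diagonalised by hand.

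Finally, I would exhibit the required modes. On the bi-invariant subspace, direct substitution into the formula of Theorem~\ref{ThmTorsionForms} (with simple perturbations $h$ built from wedge products of left-invariant $1$-forms adapted to the \Sp-structure on $\mathrm{SU}(3)$) should produce eigenvalues of both signs, giving the stable and unstable directions. For zero modes, the bi-invariant metric on $\mathrm{SU}(3)$ admits a large space of Killing vector fields $X$, and each Lie derivative $\mathcal{L}_X \Phi_0$ lies in $\ker L$ by the diffeomorphism-invariance of the flow; this is also the feature that prevents Schiemanowski's stability criterion from applying. The main obstacle is the first step: linearising $T^1_8$ and $T^5_{48}$ requires differentiating the $\Phi$-dependent projections appearing in their definitions, and care is needed to separate the genuine spectrum from the gauge directions coming from scaling and diffeomorphisms. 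Once this is done, Steps 2 and 3 amount to concrete finite-dimensional linear algebra on a small number of $\mathrm{SU}(3)$-isotypic components.
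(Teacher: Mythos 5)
Your overall strategy — linearise the renormalised flow at $\Phi_0$ and exhibit eigendirections of each sign — is the natural one and aligns with the goal, but it diverges from the paper's method and contains one genuine gap and one practical misstep.

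The paper does not compute the full linearised operator $L$ and diagonalise it. Instead it evaluates the quadratic form $\langle L\Psi_0,\Psi_0\rangle = \frac{1}{2}\frac{\partial}{\partial t}\big|_{t=0}\langle\Psi_t,\Psi_t\rangle$ on a handful of explicit bi-invariant deformations $\Psi_0$: seven deformations built by scaling Hodge-dual pairs in $\Phi_0$ by $e^{\pm s}$ (all found to be unstable, e.g.\ giving $67/18>0$), a deformation in the direction $e^{1234}-e^{5678}$ (stable, giving $-16/9$), and a deformation in the $\Omega^4_7$-direction using the explicit family from Karigiannis (zero mode). This sidesteps the need to determine the whole spectrum: finding one positive and one negative value of the quadratic form already proves instability with stable and unstable directions. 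Your Peter--Weyl plan would in principle recover the same information, but the restriction to bi-invariant variations and the diagonalisation by hand is considerably more work than the paper's targeted evaluation.

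The genuine gap is the zero-mode argument. You propose that Lie derivatives $\mathcal{L}_X\Phi_0$ along Killing fields $X$ of $g_{\Phi_0}$ give nontrivial elements of $\ker L$. But $\Phi_0$ here is the bi-invariant Cayley form on $\mathrm{SU}(3)$, and the Killing fields of the bi-invariant metric are precisely the left- and right-invariant vector fields, all of which preserve $\Phi_0$. Hence $\mathcal{L}_X\Phi_0 = 0$ identically, and your proposed zero modes are trivial. The zero modes the paper actually exhibits are of a different origin: they are deformations in $\Omega^4_7$, i.e.\ rotations of $\Phi_0$ inside the $\mathrm{SO}(8)/\mathrm{Spin}(7)$-family of Cayley forms inducing the \emph{same} Riemannian metric. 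These change $\Phi$ without changing $g_\Phi$, are not generated by diffeomorphisms, and must be detected separately (the paper plugs in Karigiannis's explicit one-parameter $\Omega^4_7$-family and checks the quadratic form vanishes). Your argument as stated does not produce these.

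A smaller point: you assert that Theorem~\ref{ThmTorsionForms} is \q{what makes this tractable,} but the paper explicitly notes at the start of Section~\ref{SectionExamples} that for $\mathrm{SU}(3)$ the torsion forms are not of a simple form, so the reformulation does \emph{not} simplify things there, and the full torsion tensor $T$ is computed directly instead. The reformulation is genuinely useful in Example~\ref{ExampleOtherSolution}, where $T^1_8$ vanishes, but not in the $\mathrm{SU}(3)$ calculation underlying this stability result. Finally, your remark that the renormalisation \q{removes} the scaling mode is imprecise: the $-\lambda\Phi$ term makes the soliton a fixed point but does not project out the $\Omega^4_1$-direction; what the paper does is simply choose $\Psi_0$ orthogonal to $\Phi_0$, so the scaling direction never enters.
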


\bigskip
\noindent\textbf{Acknowledgements.}
The author would like to thank his supervisor Jason D. Lotay for his constant support and many helpful discussions that led to this article. This work was supported by a doctoral scholarship from the Engineering and Physical Sciences Research Council (Project reference EP/W524311/1 2929148).

\section{\texorpdfstring{Preliminaries on \Sp-structures}{Preliminaries on Spin(7)-structures}}\label{SectionPrelims}

In this section, we recall some necessary information about \Sp-structures, primarily to fix notation and to introduce the objects of interest in this paper. More detail can be found in \cite{JoyceBook} and, particularly pertaining to flows, \cite{KarigiannisFlows} and \cite{Dwivedi24}. We also make explicit the well-known relationship between torsion forms and torsion tensors, using the expression for the torsion tensor in terms of the intrinsic torsion from \cite{Niedzialomski}, which will be useful when dealing with explicit computations in Sections \ref{SectionReformulation} and \ref{SectionExamples}.

We begin by defining the Lie group Spin($7$) and \Sp-structures.
\begin{defn}
    Let $\mathbb{R}^8$ have an orthonormal basis given by $\{e_1,\cdots, e_8\}$ and define the $4$-form $\Phi_0$ as
    \begin{equation}
        \begin{split}
            \Phi_0 =& e^{1234}+ e^{1256} + e^{1278} + e^{1357} - e^{1368} - e^{1458} - e^{1467}\\ 
            &+e^{5678}+ e^{3478} + e^{3456} + e^{2468} - e^{2457} - e^{2367} -e^{2358},
        \end{split}
    \end{equation}
    where $e^i$ denotes the $1$-form dual to $e_i$, and $e^{ijkl} = e^i \wedge e^j \wedge e^k \wedge e^l$. We define the Lie group Spin($7$) to be the subgroup of $\operatorname{GL}(8,\mathbb{R})$ that fixes $\Phi_0$.
\end{defn}

A \textit{Spin($7$)-structure} on an $8$-manifold $M$ is a reduction of the structure group of the frame bundle $Fr(M)$ from GL($8,\mathbb{R})$ to Spin($7$) $\subset$ SO($8$) $\subset$ GL($8,\mathbb{R}$). For what follows, we will almost always use the following equivalent definition.

\begin{defn}\label{DefSPin7Structure}
    A \Sp-structure on an $8$-manifold $M$ is a choice of $4$-form $\Phi$ on $M$ such that, for each $p\in M$, there exists an oriented isomorphism between $T_pM$ and $\mathbb{R}^8$ for which $\Phi|_p$ is identified with the $4$-form $\Phi_0$ on $\mathbb{R}^8$ defined above. Such a $4$-form $\Phi$ is called \emph{admissible} and referred to as a \emph{Cayley form} or a \emph{\Sp-form}.
    The space of admissible $4$-forms is denoted $\mathcal{A}M.$
\end{defn}

Since Spin($7$) is a subgroup of $\operatorname{SO}(8)$, the existence of a \Sp-structure implies, in particular, the existence of a reduction of $Fr(M)$ to SO($8$) which provides orientability and existence of a Riemannian metric. The induced Riemannian metric is explicitly described in \cite{KarigiannisDeformations}, but we will not need such a formula here. We will denote the Riemannian metric induced by a \Sp-structure $\Phi$ by $g_\Phi$, and the associated Hodge star by $*_\Phi$. We have that $*_\Phi \Phi = \Phi$, and that $\Phi \wedge \Phi = 14\operatorname{vol}_\Phi$. The question of whether a given $8$-manifold admits a \Sp-structure is a topological one, and boils down to the following criteria.

\begin{prop}[\cite{LawsonMichelson}]\label{PropTopologicalCondition}
    An $8$-manifold $M$ admits a \Sp-structure if and only if both of the following criteria hold.
    \begin{enumerate}
        \item The first and second Stiefel-Whitney classes of $M$ vanish.
        \item Either one of the following: $p_1(M)^2 - 4p_2(M) \pm \chi(M)=0$,
    \end{enumerate}
    where $p_i(M)$ denotes the $i$th Pontryagin class of $M$, and $\chi(M)$ is the Euler characteristic.
\end{prop}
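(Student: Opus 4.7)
The plan is to proceed via obstruction theory, reducing the existence of a $\operatorname{Spin}(7)$-structure to a sequence of successive reductions of the structure group of the frame bundle. First I would fix the setup: a $\operatorname{Spin}(7)$-structure is, by definition, a reduction of the frame bundle $\operatorname{Fr}(M)$ from $\operatorname{GL}(8,\mathbb R)$ to $\operatorname{Spin}(7)$, and this factors through the chain
\[
\operatorname{Spin}(7)\subset \operatorname{Spin}(8)\to \operatorname{SO}(8)\subset \operatorname{GL}(8,\mathbb R).
\]
I would carry out the reduction along this chain. The first step, $\operatorname{GL}(8,\mathbb R)\to \operatorname{SO}(8)$, is just the choice of an orientation and a Riemannian metric, which exists iff $w_1(M)=0$. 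The next step, $\operatorname{SO}(8)\to \operatorname{Spin}(8)$, is the existence of a spin structure, obstructed exactly by $w_2(M)\in H^2(M;\mathbb Z/2)$; the lift through $\operatorname{Spin}(8)$ is required because $\operatorname{Spin}(7)$ is simply connected, so any principal $\operatorname{Spin}(7)$-bundle maps into $\operatorname{SO}(8)$ through $\operatorname{Spin}(8)$. This gives the necessity, and also the reduction of the problem, once conditions $w_1=w_2=0$ hold, to the question of further reducing a principal $\operatorname{Spin}(8)$-bundle $P_{\operatorname{Spin}(8)}$ to $\operatorname{Spin}(7)$.

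The key geometric input is that the homogeneous space $\operatorname{Spin}(8)/\operatorname{Spin}(7)$ is diffeomorphic to $S^7$; the three conjugacy classes of $\operatorname{Spin}(7)$ inside $\operatorname{Spin}(8)$ (related by triality) correspond to stabilisers of a unit vector in the vector representation and in the two half-spinor representations. A reduction of $P_{\operatorname{Spin}(8)}$ to $\operatorname{Spin}(7)$ is therefore the same as a nowhere-zero section of the associated rank-$8$ real vector bundle $E$ in one of these three representations, equivalently a section of an $S^7$-bundle over $M^8$. In this dimension the primary obstruction is the only one, since $\pi_i(S^7)=0$ for $i<7$ and the obstruction class lives in $H^8(M;\pi_7(S^7))=H^8(M;\mathbb Z)$; this class is exactly the Euler class $e(E)$ of the associated rank-$8$ bundle.

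The final step is the computation of the Euler classes of the three associated bundles in terms of characteristic classes of $TM$. For the vector representation one obtains $e(TM)$, so $\chi(M)$. For the two half-spinor bundles $S^\pm$ of a spin $8$-manifold, a standard Chern--character / splitting-principle computation gives
\[
e(S^\pm)=\tfrac{1}{8}\bigl(p_1(TM)^2-4p_2(TM)\bigr)\pm \tfrac{1}{2}\chi(M),
\]
up to normalisation conventions. Combining with triality (which acts transitively on the three sphere bundles), the vanishing of any one of these Euler classes is equivalent to $p_1(M)^2-4p_2(M)\pm \chi(M)=0$, giving precisely condition (2). Putting the pieces together yields the biconditional.

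The main obstacle I anticipate is the last step: establishing the Euler class formulae for $S^\pm$ cleanly, and handling the sign/normalisation conventions so that the statement $p_1^2-4p_2\pm\chi=0$ appears in the correct form; this is essentially a characteristic-class bookkeeping exercise via the splitting principle applied to the spin representation of $\operatorname{Spin}(8)$, together with an appeal to triality to avoid treating $S^+$ and $S^-$ separately. Everything else is standard obstruction theory, and the result is exactly what Lawson--Michelson record.
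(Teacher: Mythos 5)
The paper does not prove this proposition; it is quoted from Lawson--Michelsohn, so there is no internal proof to compare against. Your strategy---obstruction theory, the chain through $\operatorname{Spin}(8)$, the identification $\operatorname{Spin}(8)/\operatorname{Spin}(7)\cong S^7$, and the Euler class of an associated rank-$8$ bundle as the sole obstruction on an $8$-manifold---is the standard argument. However, the step you flag as the ``main obstacle'' is exactly where your proof is incomplete, and the formula you wrote is wrong. By the splitting principle the weights of the half-spin representations $\Delta^{\pm}$ of $\operatorname{Spin}(8)$ are $\tfrac12(\pm x_1\pm x_2\pm x_3\pm x_4)$ with fixed sign-parity, so the Euler class is a product of four linear factors each carrying a $\tfrac12$, giving $\tfrac{1}{16}$ and not $\tfrac18$. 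Expanding, one finds
\[
e(\Delta^{\pm}) \;=\; \tfrac{1}{16}\bigl(p_1(M)^2 - 4p_2(M)\bigr) \,\pm\, \tfrac12\,e(TM),
\]
so the obstruction vanishes iff $p_1^2 - 4p_2 \pm 8\chi = 0$. This is the form in Lawson--Michelsohn (and is what holds, e.g., for $\mathbb{HP}^2$, where $p_1^2=4$, $p_2=7$, $\chi=3$); the proposition as printed, with $\pm\chi(M)$, appears to have dropped the factor of $8$. Since this coefficient is the entire content of condition (2), you cannot leave it ``up to normalisation conventions''; the splitting-principle computation must actually be carried out.

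There is also a conceptual slip in the appeal to triality. Triality permutes the three $8$-dimensional representations at the group level, but the three associated bundles over a fixed spin $8$-manifold have genuinely different Euler numbers, namely $\chi$ and $\tfrac{1}{16}(p_1^2-4p_2)\pm\tfrac12\chi$, so it does not let you treat the three reductions as interchangeable. Moreover, the relevant copy of $\operatorname{Spin}(7)\subset\operatorname{SO}(8)$ is the Cayley-form stabiliser of Definition~\ref{DefSPin7Structure}, which acts \emph{irreducibly} on $\mathbb{R}^8$; its preimage in $\operatorname{Spin}(8)$ is therefore one of the spinor stabilisers, not the preimage of $\operatorname{SO}(7)$. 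The reduction in the vector representation, with obstruction $e(TM)$, is a nowhere-zero vector field and is not what you want. The $\pm$ in the theorem reflects the choice between $\Delta^+$ and $\Delta^-$ (equivalently, the orientation of $M$), not an appeal to triality collapsing three conditions into one.
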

What is more interesting, and far more difficult, is finding so-called \emph{torsion-free} \Sp-structures.

\begin{defn}
    Let $(M,\Phi)$ be an $8$-manifold with \Sp-structure. We say $\Phi$ is torsion-free if
    \[
    \nabla^{g_\Phi}\Phi = 0,
    \]
    where $\nabla^{g_\Phi}$ is the Levi-Civita connection with respect to the Riemannian metric $g_\Phi$.
\end{defn}

We now note the following equivalent conditions, the first of which is the primary motivation behind finding torsion-free \Sp-structures.
\begin{thm}\label{equivalence}
    Let $(M,\Phi)$ be an $8$-manifold with \Sp-structure. The following are equivalent:
    \begin{enumerate}
        \item $Hol(g_\Phi) \subseteq \text{Spin}(7)$,
        \item $\nabla^{g_\Phi}\Phi = 0$,
        \item $\mathrm{d}\Phi = 0$,
    \end{enumerate}
    where $Hol(g_\Phi)$ is the holonomy group of the Riemannian metric $g_\Phi$ induced by $\Phi$.
\end{thm}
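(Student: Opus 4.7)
The plan is to establish the three implications $(2) \Rightarrow (3)$, $(1) \Leftrightarrow (2)$, and $(3) \Rightarrow (2)$ separately, with the last being the substantive content (Fern\'andez's theorem). The implication $(2) \Rightarrow (3)$ is immediate, since $\mathrm{d}\Phi$ is the antisymmetrisation of $\nabla^{g_\Phi}\Phi$. For $(1) \Leftrightarrow (2)$ I would invoke the holonomy principle. If $\nabla^{g_\Phi}\Phi = 0$, then every element of $\mathrm{Hol}(g_\Phi)$ fixes $\Phi|_p$ and thus lies in \Sp by Definition \ref{DefSPin7Structure}; conversely, if $\mathrm{Hol}(g_\Phi) \subseteq \mathrm{Spin}(7)$, the Levi-Civita connection reduces to the Spin(7)-reduction of the frame bundle, and $\Phi$ corresponds to the constant section $\Phi_0$ of the associated bundle whose fibre is the single Spin(7)-fixed point, and so is parallel.

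The heart of the argument is $(3) \Rightarrow (2)$, which I would prove by a representation-theoretic decomposition. The failure of $\Phi$ to be parallel is encoded by the intrinsic torsion $\tau$, a section of $T^*M \otimes (\mathfrak{so}(8)/\mathfrak{spin}(7))$. Using $\dim \mathfrak{so}(8) = 28$ and $\dim \mathfrak{spin}(7) = 21$, this bundle has rank $8 \cdot 7 = 56$. Since Spin(7) acts irreducibly on $\mathfrak{so}(8)/\mathfrak{spin}(7) \cong \mathbb{R}^7$ and on $T^*M \cong \mathbb{R}^8$ (as the spin representation), the tensor product decomposes into Spin(7)-irreducibles as $\mathbf{8} \oplus \mathbf{48}$. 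On the other hand, $\mathrm{d}\Phi$ is a section of $\Lambda^5 T^*M$, which via the Hodge star is Spin(7)-equivariantly isomorphic to $\Lambda^3 T^*M$, and this also decomposes as $\Lambda^3_8 \oplus \Lambda^3_{48}$, matching the same irreducible constituents.

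The key step is then to note that the natural map $\tau \mapsto \mathrm{d}\Phi$ is a Spin(7)-equivariant bundle homomorphism between two bundles that are isomorphic as Spin(7)-representations, and to verify that it is non-trivial on each of the irreducible summands $\mathbf{8}$ and $\mathbf{48}$, so that Schur's lemma upgrades it to a bundle isomorphism. In practice, I would verify this either by testing on explicit one-parameter families of \Sp-structures whose intrinsic torsion is known to lie in a single summand, or, more efficiently, by using the torsion-form formulae developed in Section \ref{SectionReformulation}, which express $T^1_8$ and $T^5_{48}$ in terms of $\mathrm{d}\Phi$ and jointly parametrise $\tau$. Granted such an isomorphism, the hypothesis $\mathrm{d}\Phi = 0$ forces $\tau = 0$, which by definition is $\nabla^{g_\Phi}\Phi = 0$.

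The main obstacle is the non-triviality verification on each irreducible factor: dimension counting alone cannot distinguish an equivariant isomorphism from the zero map. I expect to circumvent this either by a direct computation in a well-chosen frame, or, in the spirit of the paper, by exhibiting explicit formulae (to be established in Section \ref{SectionReformulation}) showing that $T^1_8$ and $T^5_{48}$ are non-zero multiples of the two irreducible components of $\mathrm{d}\Phi$, making the equivalence with the vanishing of $\tau$ manifest.
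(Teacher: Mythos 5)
The paper does not actually prove this theorem; it is stated without proof or citation, being the classical result of Fern\'andez (1986) for $\mathrm{Spin}(7)$-structures. Your proposal reproduces the standard argument and is essentially correct: $(2)\Rightarrow(3)$ by antisymmetrisation, $(1)\Leftrightarrow(2)$ by the holonomy principle, and $(3)\Rightarrow(2)$ by identifying the intrinsic torsion space $T^*M\otimes(\mathfrak{so}(8)/\mathfrak{spin}(7))\cong\mathbf{8}\oplus\mathbf{48}$ with $\Lambda^5\cong\mathbf{8}\oplus\mathbf{48}$ via a $\mathrm{Spin}(7)$-equivariant map and invoking Schur's lemma. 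Two small clarifications. First, for $(1)\Rightarrow(2)$ you should make explicit that $\mathrm{Hol}(g_\Phi)\subseteq\mathrm{Spin}(7)$ is read as holonomy lying inside the specific conjugate of $\mathrm{Spin}(7)$ determined by the reduction $P_\Phi$; otherwise the implication fails (flat $\mathbb{R}^8$ has trivial holonomy yet carries non-parallel $\mathrm{Spin}(7)$-structures inducing the flat metric). Second, the explicit non-triviality verification you defer to actually lives in the paper's Section 2.2, not Section 3: Proposition \ref{PropT18T8} (the nonzero factor $-7/16$) shows the $\mathbf{8}$-block is an isomorphism, and the subsequent proposition expressing $T_{m;ab}$ via $T^c=-\frac{1}{6}*(T^1_8\wedge\Phi)+*T^5_{48}$ shows the $\mathbf{48}$-block is as well. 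Your final sentence slightly misstates the needed check: $T^1_8$ and $T^5_{48}$ are the components of $\mathrm{d}\Phi$ \emph{by definition}; what must be shown nonzero is the map from the components $T_8,T_{48}$ of the intrinsic torsion to them, which is exactly what those propositions provide.
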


A manifold with holonomy contained in Spin($7$)  is called a \emph{Spin(7)-manifold}.
We mention one of the most important consequence of having holonomy contained in \Sp.

\begin{thm}[\cite{Bonan}]\label{TheoremRicciFlat}
    Let $(M,\Phi)$ be an $8$-manifold with \Sp-structure. If $Hol(g_\Phi)\subseteq \text{Spin}(7)$, then $(M,g_\Phi)$ is Ricci-flat.
\end{thm}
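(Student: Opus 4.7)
The plan is to derive Ricci-flatness directly from the infinitesimal holonomy reduction. From $Hol(g_\Phi)\subseteq \text{Spin}(7)$, Theorem \ref{equivalence} gives $\nabla^{g_\Phi}\Phi=0$, and applying the Ricci identity to $\Phi$, viewing curvature as a derivation on tensors, yields $R(X,Y)\cdot\Phi=0$ for every $X,Y$. Read algebraically, this is exactly the statement that for every $X,Y$ the skew-endomorphism $R(X,Y)\in\mathfrak{so}(TM)$ lies in the subalgebra $\mathfrak{spin}(7)\subset\mathfrak{so}(8)$ that stabilises $\Phi$ infinitesimally.

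Next I would identify $\mathfrak{spin}(7)$ with $\Lambda^2_{21}\subset\Lambda^2 T^*M$, characterised (in the Spin(7) decomposition of 2-forms to be recalled in Section \ref{SectionReformulation}) as the $(-2)$-eigenspace of the endomorphism $P:\Lambda^2\to\Lambda^2$ given by $P(\omega)_{ij}=\tfrac{1}{2}\Phi_{ij}{}^{kl}\omega_{kl}$. The curvature condition above therefore becomes
\[
\Phi_{ij}{}^{kl}R_{klmn}=-2R_{ijmn},
\]
and by the pair-exchange symmetry $R_{ijkl}=R_{klij}$ the identity also holds with the roles of the two index pairs swapped.

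Finally, I would extract the Ricci tensor by contracting with a suitable combination of inverse metrics, using the first Bianchi identity $R_{i[jkl]}=0$ to break the pair-symmetry of $R$, and applying the standard Spin(7) contraction identity $\Phi_{iabc}\Phi_{j}{}^{abc}=42\,g_{ij}$ to reduce the result. The bookkeeping returns two multiples of $\operatorname{Ric}$ with distinct numerical coefficients on the two sides, forcing $\operatorname{Ric}=0$.

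The main obstacle is this final step: naive contractions often collapse to a tautology rather than the desired non-trivial linear relation, so one must choose the contraction carefully and apply first Bianchi at the right moment to exploit the asymmetry between the two index pairs. A more conceptual alternative is representation-theoretic: decompose the Spin(7)-module of algebraic Riemann tensors (elements of $S^2(\Lambda^2)$ in the kernel of the Bianchi map); the submodule $S^2(\mathfrak{spin}(7))\cap\ker b$ to which $R$ is constrained to belong contains no copy of the trivial or the $S^2_0\mathbb{R}^8$ representation, and since these are precisely the representations encoding the scalar and trace-free Ricci components of $R$, one concludes $\operatorname{Ric}=0$.
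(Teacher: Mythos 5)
The paper does not prove this theorem; it is stated and attributed to Bonan without proof, so there is no internal argument to compare against. That said, your outline is the classical proof and it is essentially correct; the skeleton $\nabla\Phi=0 \Rightarrow R(X,Y)\cdot\Phi=0 \Rightarrow R(X,Y)\in\mathfrak{spin}(7)\cong\Lambda^2_{21}$, followed by a contraction or a representation-theoretic count, is exactly the standard route (the latter being the argument in Besse, \emph{Einstein Manifolds}, Ch.\ 10). Two remarks. First, a normalization slip: with the paper's convention $*(\alpha\wedge\Phi)=-\alpha$ on $\Lambda^2_{21}$, and your $P(\omega)_{ij}=\tfrac12\Phi_{ij}{}^{kl}\omega_{kl}$, the eigenvalue on $\Lambda^2_{21}$ is $-1$, not $-2$ (equivalently $\Phi_{ij}{}^{kl}\omega_{kl}=-2\,\omega_{ij}$); this only affects the constant and not the conclusion. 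Second, the concern that the contraction \emph{``collapses to a tautology''} is actually avoidable here, and the bookkeeping is lighter than you suggest: consider the quantity $\Phi_i{}^{jkl}R_{jmkl}$. On one hand, since $\Phi$ is totally skew in $j,k,l$, only the alternation $R_{[j|m|kl]}$ contributes, and the first Bianchi identity (together with pair symmetry) gives $R_{[j|m|kl]}=0$, so $\Phi_i{}^{jkl}R_{jmkl}=0$. On the other hand, since $R_{jm\cdot\cdot}\in\Lambda^2_{21}$, the eigenvalue identity gives $\Phi_{ia}{}^{kl}R_{jmkl}=-2R_{jmia}$, and contracting with $g^{ja}$ yields $\Phi_i{}^{jkl}R_{jmkl}=-2\operatorname{Ric}_{mi}$. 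Equating the two gives $\operatorname{Ric}=0$ directly, without invoking the identity $\Phi_{iabc}\Phi_j{}^{abc}=42\,g_{ij}$ (which you quote correctly but do not in fact need). Your representation-theoretic alternative is also sound and would serve equally well.
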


We note that \Sp-manifolds currently provide the only non-trivial, non-complex, compact examples of Ricci-flat $8$-manifolds, which is one reason why their study is interesting.

\subsection{Decomposition of the space of forms}\label{SubsectionFormDecompositions}
The existence of a \Sp-structure $\Phi$ on an $8$-manifold $M$ defines an action of Spin($7$) on the spaces of differential forms, and so induces a decomposition of the space of differential forms on $M$ into irreducible \Sp-representations.

\begin{prop}[{\cite[Prop. 11.4.4]{JoyceBook}}]\label{PropDecompositionForms}
    Let $(M,\Phi)$ be an $8$-manifold with \Sp-structure. Then, the spaces $\Lambda^kT^*M$ decompose orthogonally (with respect to the metric on forms induced by $g_\Phi$) as follows, where $\Lambda^k_l$ corresponds to an irreducible \Sp-representation of dimension $l$:
    
    \begin{equation}\label{EquationFormSplitting}
  \begin{split}
    &\Lambda^1T^*M = \Lambda^1_8,\\
    &\Lambda^2T^*M  = \Lambda^2_7 \oplus \Lambda^2_{21},\\
    &\Lambda^3T^*M  = \Lambda^3_8 \oplus \Lambda^3_{48},\\
    &\Lambda^4T^*M  = \Lambda^4_1 \oplus \Lambda^4_7 \oplus \Lambda^4_{27} \oplus \Lambda^4_{35}.
\end{split}  
\end{equation}
Moreover, the Hodge star defines an isomorphism \[ *_\Phi : \Lambda^k_l \to \Lambda^{n-k}_l,
\]
so the above decompositions also give decompositions for $\Lambda^5T^*M ,\cdots,\Lambda^8T^*M $.
In particular,

\begin{equation} \label{Equation5Forms}
    \Lambda^5T^*M  = \Lambda^5_8 \oplus \Lambda^5_{48}.
\end{equation}

\end{prop}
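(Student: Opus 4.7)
The plan is to reduce the proposition to pointwise representation theory of Spin(7). At each point $p \in M$, Definition \ref{DefSPin7Structure} supplies an orientation-preserving linear isomorphism $T_p^*M \cong (\mathbb{R}^8)^*$ identifying $\Phi|_p$ with $\Phi_0$; any two such trivialisations differ by an element of Spin(7). Thus any Spin(7)-invariant decomposition of $\Lambda^k(\mathbb{R}^8)^*$ into irreducible subrepresentations extends globally to a decomposition of $\Lambda^kT^*M$ into subbundles depending only on $\Phi$. Moreover, wedge product, contraction with $\Phi$, and the Hodge star $*_\Phi$ (which is determined by $g_\Phi$ and the induced orientation, both Spin(7)-invariant) are all Spin(7)-equivariant, so the projections onto the summands are canonical and globally defined.

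The heart of the argument is then to decompose $\Lambda^\bullet(\mathbb{R}^8)^*$ as a Spin(7)-module. I would first collect the irreducible Spin(7)-representations of small dimension: the trivial $\mathbf{1}$; the $\mathbf{7}$ coming from the double cover $\mathrm{Spin}(7)\to\mathrm{SO}(7)$; the $\mathbf{8}$-dimensional spin representation on $\mathbb{R}^8$; the adjoint $\mathbf{21}\cong\mathfrak{spin}(7)$; and the further irreducibles $\mathbf{27}$, $\mathbf{35}$, $\mathbf{48}$. Then one verifies the decompositions degree by degree. For $k=1$ the claim is definitional. For $k=2$, the $28$-dimensional $\mathfrak{so}(8)\cong\Lambda^2$ contains $\mathfrak{spin}(7)\cong\mathbf{21}$, and the $7$-dimensional orthogonal complement is the remaining irreducible; both summands are realised as the $+3$ and $-1$ eigenspaces of the Spin(7)-equivariant operator $\omega\mapsto *_{\Phi_0}(\omega\wedge\Phi_0)$. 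For $k=3$, the map $\alpha\mapsto *_{\Phi_0}(\alpha\wedge\Phi_0)$ embeds an $\mathbf{8}$-copy into the $56$-dimensional $\Lambda^3$, and the orthogonal complement, of dimension $48$, is identified as irreducible. For $k=4$, the invariant $\Phi_0$ spans an $\mathbf{1}$, and the Hodge involution splits $\Lambda^4=\Lambda^4_+\oplus\Lambda^4_-$ into $35$-dimensional self-dual and anti-self-dual pieces; $\Lambda^4_-$ is the irreducible $\mathbf{35}$, while $\Lambda^4_+$ refines as $\mathbf{1}\oplus\mathbf{7}\oplus\mathbf{27}$ via Spin(7)-equivariant maps such as $\omega\mapsto\omega\wedge\Phi_0$ applied to elements of the $\Lambda^2_7$-summand.

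For the Hodge-star isomorphism, I would note that $*_\Phi$ is Spin(7)-equivariant, hence sends each irreducible subbundle $\Lambda^k_l$ into some irreducible Spin(7)-subbundle of $\Lambda^{n-k}T^*M$. Because the distinct Spin(7)-irreducibles appearing in $\Lambda^\bullet(\mathbb{R}^8)^*$ all have different dimensions, $*_\Phi$ must match $\Lambda^k_l$ with $\Lambda^{n-k}_l$, which simultaneously yields the decompositions of $\Lambda^5,\ldots,\Lambda^8$ and the claimed isomorphisms.

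The main obstacle is irreducibility of the putative summands, particularly the $\mathbf{48}\subset\Lambda^3$ and the refined splitting $\mathbf{1}\oplus\mathbf{7}\oplus\mathbf{27}\subset\Lambda^4_+$. Ruling out the possibility that these are reducible combinations with the same total dimensions is not a formal dimension count but a genuine representation-theoretic input, established either via Spin(7) character and Weyl-dimension formulas, or, as in \cite{JoyceBook}, by exhibiting explicit highest-weight vectors and Spin(7)-equivariant projection operators onto each summand. Once this is accepted, the rest of the proof is assembly of the pieces outlined above.
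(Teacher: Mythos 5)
The paper does not prove this proposition itself; it cites it directly from Joyce (\cite[Prop.~11.4.4]{JoyceBook}). Your proposal is a correct reconstruction of the standard argument that Joyce gives: reduce to the pointwise $\mathrm{Spin}(7)$-representation theory of $\Lambda^\bullet(\mathbb{R}^8)^*$, exhibit the summands via the equivariant operators $\alpha\mapsto *(\alpha\wedge\Phi_0)$ and the Hodge involution, establish irreducibility by representation theory, and deduce the $\Lambda^5,\dots,\Lambda^8$ decompositions from $\mathrm{Spin}(7)$-equivariance of $*_\Phi$ plus the fact that the irreducibles occurring have pairwise distinct dimensions. You also correctly single out irreducibility of the $\mathbf{48}$ and the refined splitting of the self-dual part $\Lambda^4_+ \cong \mathbf{1}\oplus\mathbf{7}\oplus\mathbf{27}$ as the substantive representation-theoretic input, which is precisely where the character/highest-weight computation is needed rather than a dimension count. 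No gaps.
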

In what follows, we will write $\Omega^k(M)$ for the space of $k$-forms on $M$ (i.e., $\Omega^k(M) = \Gamma(\Lambda^kT^*M)$), or simply $\Omega^k$. In the presence of a \Sp-structure, each space of forms $\Omega^k$ decomposes as above, e.g.,
\begin{equation}
    \Omega^4 = \Omega^4_1 \oplus \Omega^4_7 \oplus \Omega^4_{27}\oplus \Omega^4_{35}.
\end{equation}
Although the metric $g_\Phi$ and the Hodge star $*_\Phi$ depend on $\Phi$, we will often write simply $g$ and $*$.
We can describe some of the irreducible subspaces more explicitly, in terms of $\Phi$, as follows \cite[Section 4.2]{KarigiannisDeformations}:

\begin{align}\label{forms}
  \Omega^2_7   &= \{\alpha \in \Omega^2 \mid *(\alpha \wedge \Phi) = 3\alpha\}, 
  &\Omega^2_{21} &= \{\alpha \in \Omega^2 \mid *(\alpha \wedge \Phi) = -\alpha\}, \\[4pt]
  \Omega^3_8   &= \{*(\alpha \wedge \Phi) \mid \alpha \in \Omega^1_8\}, 
  &\Omega^3_{48} &= \{\alpha\in \Omega^3 \mid \alpha \wedge \Phi = 0\}, \\[4pt]
  \Omega^4_1   &= \{\lambda \Phi \mid \lambda \in \mathbb{R}\}, 
  &\Omega^4_{35} &= \{\alpha \in \Omega^4 \mid *\alpha = -\alpha\}.
\end{align}

Again, applying the Hodge star gives, for example, 

\begin{equation}\label{EquationLambda58}
    \Omega^5_{8} = \{\alpha \wedge \Phi \mid \alpha \in \Omega^1_8 \}.
\end{equation}

The equations above allow us to write down projection formulae for $\pi^k_l:\Omega^k \to \Omega^k_l$, as well as expressions for the decompositions in local coordinates. The ones we will need are listed in the following proposition.

\begin{prop}[{\cite[Prop. 2.1]{KarigiannisFlows}}]
    Writing $\pi^k_l:\Omega^k \to \Omega^k_l$ for the projection maps from the space of $k$-forms to the (pointwise) $l$-dimensional irreducible component described above, we have the following explicit formulae:
    
\begin{align}
  \pi^2_7(\alpha)   &= \frac{\alpha + *(\Phi \wedge \alpha)}{4}, 
  &\pi^2_{21}(\alpha) &= \frac{3\alpha - *(\Phi \wedge \alpha)}{4}, \\[4pt]
  \pi^4_1(\alpha)   &= \frac{g_\Phi(\alpha,\Phi)}{|\Phi|^2}\,\Phi, 
  &\pi^4_{35}(\alpha) &= \frac{\alpha - *\alpha}{2}.  
\end{align}

\end{prop}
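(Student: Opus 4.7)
The plan is to treat each formula as a direct consequence of the eigenspace description of the corresponding subspace given in \eqref{forms}, combined with the fact that the decompositions in Proposition \ref{PropDecompositionForms} are orthogonal with respect to $g_\Phi$. The verification is entirely linear-algebraic and pointwise, so no analytic input is needed beyond the definitions already stated.

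For the two-form projections, I would introduce the operator $L:\Omega^2\to\Omega^2$ defined by $L(\alpha)=*(\Phi\wedge\alpha)$. The defining equations $\Omega^2_7=\ker(L-3\,\mathrm{Id})$ and $\Omega^2_{21}=\ker(L+\mathrm{Id})$ together with $\Omega^2=\Omega^2_7\oplus\Omega^2_{21}$ exhibit $L$ as a diagonalisable operator with eigenvalues $3$ and $-1$. Writing $\alpha=\alpha_7+\alpha_{21}$ and applying $L$ gives the $2\times 2$ linear system
\begin{equation*}
\alpha=\alpha_7+\alpha_{21},\qquad L(\alpha)=3\alpha_7-\alpha_{21},
\end{equation*}
which inverts to $\alpha_7=\tfrac14(\alpha+L(\alpha))$ and $\alpha_{21}=\tfrac14(3\alpha-L(\alpha))$, matching the stated formulae. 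The only thing one needs here, beyond the eigenspace description, is that these really are all of $\Omega^2$, which is precisely \eqref{EquationFormSplitting}.

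The two four-form projections are even more elementary. Since $\Omega^4_1=\mathbb{R}\cdot\Phi$ is one-dimensional and the decomposition is $g_\Phi$-orthogonal, the orthogonal projection onto it is simply $\alpha\mapsto \tfrac{g_\Phi(\alpha,\Phi)}{|\Phi|^2}\Phi$, which is the stated expression. For $\pi^4_{35}$, the identity $*^2=\mathrm{Id}$ on $\Omega^4$ in dimension eight lets one split $\Omega^4=\Omega^4_+\oplus\Omega^4_-$ into self-dual and anti-self-dual parts via $\alpha=\tfrac12(\alpha+*\alpha)+\tfrac12(\alpha-*\alpha)$. Using $*\Phi=\Phi$ together with the explicit descriptions in \eqref{forms} for the $\Omega^4_l$, one checks that $\Omega^4_1$, $\Omega^4_7$ and $\Omega^4_{27}$ lie in $\Omega^4_+$, so by dimension count ($1+7+27=35=\dim\Omega^4_-$ of the remaining complement) $\Omega^4_{35}=\Omega^4_-$, and the formula $\pi^4_{35}(\alpha)=\tfrac12(\alpha-*\alpha)$ follows.

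There is no substantive obstacle here; the work is really done by \eqref{forms} and the orthogonality of the Spin($7$)-decomposition. The only mild point of care is making sure that the $\pm 1$-eigenspaces of $*$ on $\Omega^4$ align correctly with the representation-theoretic pieces $\Omega^4_1\oplus\Omega^4_7\oplus\Omega^4_{27}$ vs.\ $\Omega^4_{35}$; this can either be checked directly at a single point using the model $4$-form $\Phi_0$ and the computation $\Phi_0\wedge\Phi_0=14\,\mathrm{vol}$, or deduced from the fact that $*$ commutes with the Spin($7$)-action, so its eigenspaces are subrepresentations whose dimensions match uniquely as above.
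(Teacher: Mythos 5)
The paper does not prove this proposition; it is quoted verbatim from Karigiannis (cite given in the statement), so there is no in-paper proof to compare against. Your argument is correct and is the standard one. The two-form projections follow exactly as you say: $L=*(\Phi\wedge\cdot)$ is diagonalisable on $\Omega^2$ with eigenvalues $3$ and $-1$ on $\Omega^2_7$ and $\Omega^2_{21}$ respectively, and inverting the $2\times2$ system gives the stated formulae; $\pi^4_1$ is the orthogonal projection onto the line $\mathbb{R}\Phi$; and $\pi^4_{35}=\tfrac12(1-*)$ because $*$ is a self-adjoint involution on $\Omega^4$ in dimension eight, so its $\pm1$-eigenspaces are orthogonal, $\Omega^4_{35}$ is by definition the $(-1)$-eigenspace, and hence $\Omega^4_1\oplus\Omega^4_7\oplus\Omega^4_{27}=(\Omega^4_{35})^\perp=\Omega^4_+$ automatically.

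One small imprecision worth fixing: you say one checks $\Omega^4_1,\Omega^4_7,\Omega^4_{27}\subset\Omega^4_+$ \q{using the explicit descriptions in \eqref{forms} for the $\Omega^4_l$,} but \eqref{forms} only gives explicit descriptions of $\Omega^4_1$ and $\Omega^4_{35}$, not of $\Omega^4_7$ or $\Omega^4_{27}$. The representation-theoretic fallback you supply does close this: $*$ commutes with the $\mathrm{Spin}(7)$-action, so $\Omega^4_\pm$ are $35$-dimensional subrepresentations, and since $*\Phi=\Phi$ forces $\Omega^4_1\subset\Omega^4_+$, the only possibility among the irreducible summands $\boldsymbol{1}\oplus\boldsymbol{7}\oplus\boldsymbol{27}\oplus\boldsymbol{35}$ is $\Omega^4_+=\Omega^4_1\oplus\Omega^4_7\oplus\Omega^4_{27}$. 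Alternatively, and more directly, once you note $*$ is self-adjoint, orthogonality of the $\mathrm{Spin}(7)$-decomposition plus $\Omega^4_{35}=\Omega^4_-$ already gives $\Omega^4_+=(\Omega^4_-)^\perp=\Omega^4_1\oplus\Omega^4_7\oplus\Omega^4_{27}$ without any dimension count.
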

When there is little risk of ambiguity, we will simply write $\pi_l$ for the projection.
We will also write $\pi^k_{l\oplus m} : \Omega^k \to \Omega^k_{l}\oplus \Omega^k_m$ for the natural projections obtained from those defined above.

In order to describe the space of $4$-forms, and therefore deformations and flows of \Sp-structures, Karigiannis introduces the \emph{diamond} map, which we recall here. (Note that Karigiannis calls this map $D$ \cite{KarigiannisFlows}. Here, we adopt the notation $\diamond$, as used more recently in, for example, \cite{Dwivedi24} \cite{DLE24}).
Let $A$ be a $(0,2)$-tensor and consider the following map:
\begin{equation}
    A \mapsto(A \diamond \Phi)_{ijkl} = \left( A_{ip} g^{pq} \Phi_{qjkl} + A_{jp} g^{pq} \Phi_{iqkl} + A_{kp}g^{pq}  \Phi_{ijql} + A_{lp} g^{pq} \Phi_{ijkq} \right). 
\end{equation}

The diamond map $\diamond$ depends on the metric $g_\Phi$ and therefore on $\Phi$, but we will simply write $\diamond$ to avoid an overload of notation.
Using the metric, $A$ can be viewed as a $(1,1)$-tensor, and under the identification $\Gamma(TM\otimes T^*M) \cong \operatorname{End}(TM) \cong \mathfrak{gl}(8,\mathbb{R})$, the diamond map describes the infinitesimal action of $\operatorname{GL(8,\mathbb{R})}$ on $\Phi$. Hence, any infinitesimal deformation of a \Sp-structure $\Phi$ can be written as $A\diamond\Phi$ for some $A$. We can decompose $A$ into symmetric and skew-symmetric parts : $A = h + X$, for $h\in S^2$ and $X \in \Omega^2$. Moreover, Karigiannis shows that the kernel of $\diamond$ is $\Omega^2_{21}$, so we can take $X$ to lie in $\Omega^2_7$.
\begin{prop}[{\cite[Proposition 2.3]{KarigiannisFlows}}]\label{PropDiamondKernel}
    The kernel of the map $A \mapsto A \diamond \Phi$ is isomorphic to $\Omega^2_{21}$.
\end{prop}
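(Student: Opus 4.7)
The plan is to use a representation-theoretic argument based on Spin(7)-equivariance, combined with a dimension count coming from the $\mathrm{GL}(8,\mathbb{R})$-orbit of $\Phi$. First I would note that the space of $(0,2)$-tensors $T^*M\otimes T^*M$ decomposes orthogonally as $S^2T^*M\oplus \Lambda^2T^*M$, and that under the induced Spin(7)-action these further split as
\begin{equation*}
    T^*M\otimes T^*M \;\cong\; \mathbb{R}\langle g_\Phi\rangle \,\oplus\, S^2_0T^*M \,\oplus\, \Omega^2_7 \,\oplus\, \Omega^2_{21},
\end{equation*}
with irreducible pieces of dimensions $1,\,35,\,7,\,21$ respectively. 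Since $\Phi$ is Spin(7)-invariant, the diamond map $A\mapsto A\diamond\Phi$ is Spin(7)-equivariant, so it restricts to an equivariant map on each summand.

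Next I would compare with the decomposition of $\Omega^4$ recalled in Proposition~\ref{PropDecompositionForms}, namely $\Omega^4 = \Omega^4_1\oplus\Omega^4_7\oplus\Omega^4_{27}\oplus\Omega^4_{35}$. The crucial observation is that $\Omega^4$ contains no irreducible Spin(7)-summand of dimension $21$. Schur's lemma then forces the restriction of $\diamond$ to $\Omega^2_{21}$ to vanish, giving the inclusion $\Omega^2_{21}\subseteq\ker\diamond$.

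For the reverse inclusion, I would apply Schur's lemma again: on each of the remaining summands $\mathbb{R}\langle g_\Phi\rangle$, $\Omega^2_7$, $S^2_0T^*M$, the map $\diamond$ is Spin(7)-equivariant into $\Omega^4$ and, by irreducibility, is either zero or injective onto the matching summand $\Omega^4_1$, $\Omega^4_7$, $\Omega^4_{35}$. For $g_\Phi$ itself one has the immediate computation $(g_\Phi\diamond\Phi)_{ijkl}=4\Phi_{ijkl}\neq 0$, ruling out the zero case there. To rule it out on the other two pieces, I would invoke the orbit dimension count: $\mathrm{GL}(8,\mathbb{R})$ has dimension $64$, the stabiliser of $\Phi_0$ is Spin(7) of dimension $21$, so the tangent space at $\Phi$ to the $\mathrm{GL}(8,\mathbb{R})$-orbit (i.e., the image of $\diamond$) is $43$-dimensional. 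Since this image already contains the $1$-dimensional piece $\Omega^4_1$, the remaining $42$ dimensions must be accounted for by non-trivial images from $\Omega^2_7$ and $S^2_0T^*M$, forcing injectivity on both. Hence $\ker\diamond=\Omega^2_{21}$.

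The main obstacle is verifying non-triviality on the $\Omega^2_7$ and $S^2_0$ summands; while one could do this by choosing an explicit test element and computing a single component, the dimension count via the stabiliser is cleaner and avoids any explicit tensor manipulation. An alternative, slightly more hands-on route would be to invoke Karigiannis's deformation description of admissible $4$-forms to identify the image of $\diamond$ directly with $\Omega^4_1\oplus\Omega^4_7\oplus\Omega^4_{35}$, from which the statement is immediate by rank-nullity.
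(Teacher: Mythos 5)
Your argument is correct. Note that this paper does not actually give a proof of this proposition; it is cited from Karigiannis. The standard proof there is a one-step version of your dimension count: since $\mathrm{Spin}(7)$ is \emph{defined} as the stabiliser of $\Phi_0$ in $\mathrm{GL}(8,\mathbb{R})$, the kernel of the infinitesimal action $A\mapsto A\diamond\Phi$ is precisely the Lie algebra of the stabiliser, $\mathfrak{spin}(7)\subset\mathfrak{so}(8)\cong\Lambda^2$, which is $\Lambda^2_{21}$ by the very meaning of the subscript. Your proof reaches the same conclusion by packaging the two inclusions separately: Schur's lemma (no $21$-dimensional summand in $\Lambda^4$) gives $\Omega^2_{21}\subseteq\ker\diamond$, and the orbit-dimension count $64-21=43$ forces the remaining summands to inject, so $\ker\diamond$ has dimension exactly $21$. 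Both directions are valid, but notice that the Schur step is strictly redundant: the Lie algebra of the stabiliser \emph{is} the kernel of the infinitesimal action, so the orbit count alone already tells you $\ker\diamond=\mathfrak{spin}(7)=\Omega^2_{21}$ without ever decomposing $\Lambda^4$. A small bonus of your fuller argument is that it simultaneously recovers Corollary~\ref{CorollaryIsomorphism} (identifying the image with $\Omega^4_1\oplus\Omega^4_7\oplus\Omega^4_{35}$), which the bare stabiliser argument does not immediately give — so you should be careful not to then \emph{cite} Corollary~\ref{CorollaryIsomorphism} as your "alternative route," as that would be circular.
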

One can also show the following.
\begin{cor}[{\cite[Corollary 2.6]{KarigiannisFlows}}]\label{CorollaryIsomorphism}
    The map $A \mapsto A\diamond \Phi$ is injective on $S^2 \oplus \Omega^2_7$, and is therefore an isomorphism onto its image $\Omega^4_1 \oplus \Omega^4_7 \oplus \Omega^4_{35} $. 
    Decompose the space of symmetric $2$-tensors $S^2$ into multiples of the metric and trace-free parts: $S^2 = \langle g_\Phi \rangle  \oplus S^2_0$.
    The summands $\langle g_\Phi \rangle,S^2_0$ and $\Omega^2_7$ are mapped isomorphically onto $\Omega^4_1,\Omega^4_{35}$ and $\Omega^4_7$, respectively.
\end{cor}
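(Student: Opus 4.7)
The plan is to combine the injectivity result of Proposition \ref{PropDiamondKernel} with Schur's lemma applied to the Spin($7$)-equivariant map $\diamond$. First I would establish injectivity on $S^2 \oplus \Omega^2_7$: inside $T^*M \otimes T^*M = S^2 \oplus \Omega^2$, the subspace $S^2 \oplus \Omega^2_7$ intersects $\Omega^2_{21}$ trivially, since symmetric and skew parts are complementary and $\Omega^2_7 \cap \Omega^2_{21} = 0$. Proposition \ref{PropDiamondKernel} identifies $\ker(\diamond)$ with $\Omega^2_{21}$, so the restriction of $\diamond$ to $S^2 \oplus \Omega^2_7$ has trivial kernel.

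Next, I would observe that the map $A \mapsto A \diamond \Phi$ is manifestly Spin($7$)-equivariant, as it is built pointwise from $\Phi$, $g_\Phi$, and $g_\Phi^{-1}$, all of which are Spin($7$)-invariant. Decomposing the domain into Spin($7$)-irreducibles gives
\[
S^2 \oplus \Omega^2_7 \;=\; \langle g_\Phi \rangle \oplus S^2_0 \oplus \Omega^2_7
\]
of dimensions $1,\,35,\,7$, while the codomain decomposes as $\Omega^4 = \Omega^4_1 \oplus \Omega^4_7 \oplus \Omega^4_{27} \oplus \Omega^4_{35}$ with dimensions $1,\,7,\,27,\,35$. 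By Schur's lemma, each irreducible summand of the domain must be sent either to zero or isomorphically into an irreducible summand of $\Omega^4$ of the same isomorphism type.

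Then I would do the matching by dimension and isomorphism type: $\langle g_\Phi \rangle$ can only map to $\Omega^4_1$, the space $\Omega^2_7$ only to $\Omega^4_7$, and $S^2_0$ only to $\Omega^4_{35}$; in particular nothing can hit $\Omega^4_{27}$. None of these maps can be zero: this is forced by the injectivity from step one, and in the case of the trivial summand it can also be verified by the direct computation $(g_\Phi \diamond \Phi)_{ijkl} = 4\Phi_{ijkl}$ from the definition of $\diamond$. Equivariance then upgrades each nonzero map to an isomorphism onto the corresponding summand, which also confirms surjectivity onto $\Omega^4_1 \oplus \Omega^4_7 \oplus \Omega^4_{35}$.

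The main conceptual input that I would need to cite or justify is that $S^2_0$, the trace-free symmetric $2$-tensors, is an irreducible Spin($7$)-representation of dimension $35$, not isomorphic to $\Omega^4_7 \oplus \Omega^4_{27}$; this is standard but essential, since without irreducibility Schur's lemma does not pin down the image. The rest of the argument should be smooth once this representation-theoretic fact is in hand.
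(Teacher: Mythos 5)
Your proof is correct, and it takes the natural route: combine the kernel identification of Proposition~\ref{PropDiamondKernel} with Schur's lemma and dimension counting to pin down the image of each domain summand. Note that the paper itself does not supply a proof of this corollary but cites it from \cite{KarigiannisFlows}; your argument is essentially the one given there, so there is no discrepancy to report. The one input you rightly flag, irreducibility of $S^2_0(\mathbb{R}^8)$ as a Spin$(7)$-representation, is indeed the crux: once you know $S^2_0$ is irreducible of dimension $35$, the injectivity from step one makes $\diamond|_{S^2_0}$ a nonzero equivariant map, so its image is an irreducible $35$-dimensional subrepresentation of $\Omega^4$; since the four summands $\Omega^4_1, \Omega^4_7, \Omega^4_{27}, \Omega^4_{35}$ are pairwise non-isomorphic (distinct dimensions), the isotypic decomposition is unique and the image must be $\Omega^4_{35}$. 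Thus you do not even need to assume $S^2_0 \cong \Omega^4_{35}$ in advance — it falls out of the argument. The explicit check $(g_\Phi \diamond \Phi)_{ijkl} = 4\Phi_{ijkl}$ is a nice concrete confirmation for the trivial summand.
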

In Section \ref{SectionFlows} we will see how to use the above discussion to describe all flows of \Sp-structures. Before that, we use it to define the \emph{torsion tensor}, which we then relate to the \emph{torsion forms}.

\subsection{The torsion tensor and torsion forms of a \texorpdfstring{\Sp-structure}{Spin(7)-structure}}
The obstruction to a \Sp-structure being torsion-free is called \emph{torsion}. In light of Theorem \ref{equivalence}, there are  two equivalent ways of defining and working with the torsion of a \Sp-structure, using $\mathrm{d}\Phi$ or $\nabla\Phi$. Considering $\mathrm{d}\Phi$ and how it decomposes into components will give rise to the notion of \emph{torsion forms}, and studying $\nabla \Phi$ will give rise to the notion of the \emph{torsion tensor}. In this section, we recall both notions, and make concrete the well-known equivalence between them by explicitly expressing the torsion tensor in terms of the torsion forms.
We start with the torsion tensor, where our discussion follows \cite[Section 2.2]{KarigiannisFlows}.

\begin{lem}[{\cite[Lemma 2.10]{KarigiannisFlows}}]
Let $X$ be a vector field and $\Phi$ be a \Sp-structure on an $8$-manifold $M$.
Then, $\nabla_X\Phi \in \Omega^4_7 \subset \Omega^4$. Thus, the $(0,5)$-tensor $\nabla \Phi$ lies in the space $\Omega^1_8 \otimes \Omega^4_7$.   
\end{lem}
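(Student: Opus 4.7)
The strategy I would take is to leverage the fact that the Levi-Civita connection is metric-compatible, so that $\nabla_X$ acts on tensors through the Lie algebra $\mathfrak{so}(T_pM)$, and then use the Spin$(7)$-decomposition $\mathfrak{so}(T_pM)\cong \Lambda^2 T^*_p M = \Omega^2_7|_p\oplus \Omega^2_{21}|_p$ together with the fact that $\Omega^2_{21}$ is precisely the Lie algebra $\mathfrak{spin}(7)$, which annihilates $\Phi$ by definition of Spin$(7)$.

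Concretely, I would first fix a point $p\in M$ and work in a local orthonormal frame in which the Levi-Civita connection is represented by an $\mathfrak{so}(T_pM)$-valued $1$-form. For any $X\in T_pM$ one obtains a $2$-form $\omega_X\in \Lambda^2 T^*_pM$ (viewed as an element of $\mathfrak{so}(T_pM)$ via the metric) such that the action of $\nabla_X$ on a $4$-form is the natural derivation induced by $\omega_X$ acting on each slot. Writing this action out in components and comparing with the formula
\[
(A\diamond \Phi)_{ijkl}=A_{ip}g^{pq}\Phi_{qjkl}+A_{jp}g^{pq}\Phi_{iqkl}+A_{kp}g^{pq}\Phi_{ijql}+A_{lp}g^{pq}\Phi_{ijkq},
\]
I would verify the identity $\nabla_X\Phi = \omega_X \diamond \Phi$ (up to a sign convention that will not matter for the conclusion). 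This is a straightforward index chase, relying only on the skew-symmetry $\omega^a{}_b=-g^{ac}g_{bd}\omega^d{}_c$ and the derivation rule for $\nabla$ on exterior forms.

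The conclusion is then immediate from the representation theoretic facts already in the paper: decompose $\omega_X=(\omega_X)_7+(\omega_X)_{21}$ according to $\Omega^2=\Omega^2_7\oplus\Omega^2_{21}$. By Proposition \ref{PropDiamondKernel}, the $\Omega^2_{21}$-component lies in the kernel of $\diamond$, so it contributes nothing; by Corollary \ref{CorollaryIsomorphism}, $\diamond$ maps $\Omega^2_7$ isomorphically onto $\Omega^4_7$. Hence $\nabla_X\Phi=(\omega_X)_7\diamond\Phi\in \Omega^4_7$ at every point. Since this holds for every vector $X$, the full covariant derivative $\nabla\Phi$, viewed as a section of $T^*M\otimes \Lambda^4 T^*M$, lies in $\Omega^1_8\otimes \Omega^4_7$.

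The only non-trivial step is matching conventions in the computation $\nabla_X\Phi=\omega_X\diamond\Phi$; everything else is a direct application of results already established. An alternative approach would be to argue directly with the projectors $\pi^4_1,\pi^4_{35},\pi^4_{27}$ -- the first vanishes because $|\Phi|^2$ is constant, the second because the Hodge star commutes with $\nabla$ and $*\Phi=\Phi$, which forces $\nabla_X\Phi$ to be self-dual -- but ruling out the $\Omega^4_{27}$-component requires essentially the same representation theoretic input as above, so I consider the $\mathfrak{so}(8)$-action approach the cleanest.
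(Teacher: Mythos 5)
The paper does not reprove this statement---it is quoted verbatim from Karigiannis with a citation and then used as input to Definition \ref{DefTorsion}---so there is no in-text proof to match against; your argument is the standard intrinsic-torsion one and is essentially correct.

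There is, however, one step that needs sharpening: the claim that ``the action of $\nabla_X$ on a $4$-form is the natural derivation induced by $\omega_X$ acting on each slot''. In an arbitrary local orthonormal coframe a $4$-form $\alpha = \tfrac{1}{4!}\alpha_{ijkl}\,e^{ijkl}$ satisfies
\[
(\nabla_X\alpha)_{ijkl} = X(\alpha_{ijkl}) - \omega^p{}_i(X)\,\alpha_{pjkl} - \omega^p{}_j(X)\,\alpha_{ipkl} - \omega^p{}_k(X)\,\alpha_{ijpl} - \omega^p{}_l(X)\,\alpha_{ijkp},
\]
and the first term does not vanish in general, so $\nabla_X$ is \emph{not} just the algebraic derivation. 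What saves the argument for $\alpha = \Phi$ is that a Spin$(7)$-structure provides, near every point, a Spin$(7)$-\emph{adapted} orthonormal coframe in which the $\Phi_{ijkl}$ are the constants of Definition \ref{DefSPin7Structure}; in such a coframe $X(\Phi_{ijkl})=0$, and only then does $\nabla_X\Phi$ reduce to the pure derivation $\omega_X\diamond\Phi$ with $\omega_X\in\Omega^2$. This is precisely where the hypothesis that $\Phi$ is a Spin$(7)$-structure (rather than an arbitrary $4$-form) enters, and you should state it. Once this is added, the rest is right: decomposing $\omega_X$ in $\Omega^2_7\oplus\Omega^2_{21}$, killing the $\Omega^2_{21}\cong\mathfrak{spin}(7)$ piece via Proposition \ref{PropDiamondKernel}, and invoking Corollary \ref{CorollaryIsomorphism} to identify $\diamond(\Omega^2_7)=\Omega^4_7$ gives $\nabla_X\Phi\in\Omega^4_7$. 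Your observation that this route rules out the $\Omega^4_{27}$-component \emph{a priori} (since $\Omega^4_{27}$ is not in the image of $\diamond$) is indeed the cleanest way to close that case, and is a genuine advantage over the projector approach you sketch as an alternative.
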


Because of this, Corollary \ref{CorollaryIsomorphism} implies that for each vector field $e_m$, there exists a $2$-form $T_m\in \Omega^2_7$ (where $m$ is fixed) such that $\nabla_m\Phi = T_m\diamond\Phi$. This motivates the following definition.

\begin{defn}[{\cite[Definition 2.12]{KarigiannisFlows}}] \label{DefTorsion}
    The \emph{torsion tensor} $T$ of a \Sp-structure $\Phi$ is the element of $\Omega^1_8 \otimes \Omega^2_7$ such that
    \begin{equation}\label{EquationDefTorsion}
        \nabla_m\Phi_{ijkl} = (T_m \diamond \Phi)_{ijkl} = T_{m;ip} g^{pq} \Phi_{qjkl} 
+ T_{m;jp} g^{pq} \Phi_{i q k l} 
+ T_{m;kp} g^{pq} \Phi_{ij q l} 
+ T_{m;lp} g^{pq} \Phi_{ijk q}.
    \end{equation}
\end{defn}
Note that the semi-colon in $T_{m;ab}$ does not denote covariant differentiation of $T_m$. Rather, for each fixed $m$, $T_{m;ab}$ lies in $\Omega^2_7$ and the semi-colon serves only to separate the $\Omega^1_8$-index $m$ from the two $\Omega^2_7$-indices $a,b$.

We can explicitly describe the tensor $T_{m;ab}$ in local coordinates, in terms of $\Phi$, as follows. 
\begin{prop}[{\cite[Lemma 2.13]{KarigiannisFlows}}]\label{PropTorsion}
    The torsion tensor $T$ of a \Sp-structure $\Phi$ can be expressed in local coordinates as
    \begin{equation}\label{EquationTorsionTensor}
        T_{m;ab} = \frac{1}{96}(\nabla_m \Phi_{ajkl})(\Phi_{bpqr})g^{jp}g^{kq}g^{lr}.
    \end{equation}
\end{prop}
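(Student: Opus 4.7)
\noindent The plan is to invert the defining relation \eqref{EquationDefTorsion} by contracting both sides with three copies of the inverse metric and a single copy of $\Phi$. Concretely, I would contract $\nabla_m \Phi_{ajkl}$ with $\Phi_b{}^{jkl} := g^{jp}g^{kq}g^{lr}\Phi_{bpqr}$ and show that the right-hand side collapses to $96\,T_{m;ab}$. This is the natural thing to try because, by Corollary~\ref{CorollaryIsomorphism}, the diamond map is an isomorphism $\Omega^2_7 \to \Omega^4_7$, so some contraction linear in $\Phi$ must realise its inverse; and because the triple contraction of $\Phi$ with itself is the only ``obvious'' scalar one can build out of $\Phi$ alone.

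Expanding the right-hand side of \eqref{EquationDefTorsion} produces four terms. The first, coming from the contraction of $T_m$ with the first slot of $\Phi$, gives
\[
T_{m;ap}\,g^{pq}\Phi_{qjkl}\Phi_b{}^{jkl} = 42\,T_{m;ab},
\]
using the standard Spin(7) triple-contraction identity $\Phi_{aijk}\Phi_b{}^{ijk} = 42\,g_{ab}$. The remaining three terms are equal to one another: because $\Phi_b{}^{jkl}$ is totally antisymmetric in $j,k,l$, relabelling the dummy indices identifies each with
\[
T_{m;jp}\,g^{pq}\Phi_{aqkl}\Phi_b{}^{jkl}.
\]
To evaluate this common expression I would apply the standard Spin(7) double-contraction identity for $\Phi_{aqkl}\Phi_b{}^{jkl}$, which expands the product as a combination of $g\otimes g$ pieces, $\Phi$-linear pieces of the shape $g\otimes\Phi$, and $\Phi_{ab}{}^{jp}$-type pieces. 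Pairing with the antisymmetric tensor $T_{m;jp}g^{pq}$ kills all the symmetric contributions, while the key fact $T_m \in \Omega^2_7$, encoded via the coordinate characterisation \eqref{forms} in the form $T_{m;jp}\Phi_{ab}{}^{jp} = 6\,T_{m;ab}$, collapses the surviving $\Phi\otimes\Phi$ piece to a multiple of $T_{m;ab}$. Tracking the coefficients, each of the three equal terms contributes $18\,T_{m;ab}$, so the total is $(42 + 3\cdot 18)\,T_{m;ab} = 96\,T_{m;ab}$, yielding the claimed formula.

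The main obstacle I anticipate is purely computational: the bookkeeping for the double contraction $\Phi_{aqkl}\Phi_b{}^{jkl}$ is delicate, and one must be careful with sign and normalisation conventions (for the Hodge star, for raising indices on $\Phi$, and for the $\Omega^2_7$ projection) to make every constant come out correctly. No conceptual difficulty arises beyond this: the strategy is dictated by Corollary~\ref{CorollaryIsomorphism}, and all the underlying identities for contractions of the Cayley form are standard (see e.g.\ \cite{KarigiannisDeformations,KarigiannisFlows}).
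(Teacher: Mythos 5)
The paper does not prove this proposition; it is cited verbatim from Karigiannis \cite{KarigiannisFlows}, Lemma 2.13, and your argument is the standard (and, I believe, Karigiannis' own) one: contract \eqref{EquationDefTorsion} with $\Phi_b{}^{jkl}$, use $\Phi_{aijk}\Phi_b{}^{ijk}=42\,g_{ab}$, the double-contraction identity $\Phi_{aqkl}\Phi_{bj}{}^{kl}=6g_{ab}g_{qj}-6g_{aj}g_{qb}+4\Phi_{aqbj}$, and the $\Omega^2_7$ eigenvalue relation $\Phi_{abjq}T_m{}^{jq}=6T_{m;ab}$; each of the three non-diagonal terms then contributes $-6+24=18$ copies of $T_{m;ab}$ (the $g_{ab}g_{qj}$ piece dies against the antisymmetry of $T_m$), and $42+3\cdot18=96$ as you claim. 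One small precision worth recording when you write it up: equating Terms 2, 3 and 4 by relabelling uses the total antisymmetry of \emph{both} copies of $\Phi$ in the relevant slots (the two sign flips cancel), not just that of $\Phi_b{}^{jkl}$.
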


For expressing other geometric quantities in terms of torsion, as well as describing the gradient flow of \Sp-structures, it will be useful to decompose the torsion tensor $T$ into two orthogonal components. We start with the following result, which is proved in \cite[p.~8]{Dwivedi24}.

\begin{prop}\label{PropTorsionSplitting}
    Let $M$ be an $8$-manifold with \Sp-structure $\Phi$.
    Then,
    \begin{equation}\label{EquationT8T48}
        \Omega^1_8 \otimes \Omega^2_7 \cong \Omega^1_8 \oplus_\perp\{\gamma_{i;jk} \in \Omega^1_8 \otimes \Omega^2_7 \mid \gamma_{i;jk}g^{ik} = 0\} \cong \Omega^1_8 \oplus \Omega ^3_{48}.
    \end{equation}
    
\end{prop}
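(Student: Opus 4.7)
The plan is to exploit the natural Spin($7$)-equivariant metric contraction
\[
\tau : \Omega^1_8 \otimes \Omega^2_7 \to \Omega^1_8, \qquad \tau(\gamma)_j = \gamma_{i;jk}g^{ik},
\]
which is well-defined as a $1$-form since $\gamma$ is skew in $j,k$ (being in the $\Omega^2_7$ factor), and Spin($7$)-equivariant because $g$ is Spin($7$)-invariant. The trace-free subspace appearing in the statement is by definition $\ker\tau$, so the first isomorphism is equivalent to $\tau$ being surjective together with the standard orthogonal splitting with respect to the inherited inner product.

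Since $\Omega^1_8$ is an irreducible Spin($7$)-representation, Schur's lemma forces $\tau$ to be either zero or surjective. To rule out the former I would compute $\tau$ on a simple tensor: for $\alpha\in\Omega^1_8$ and $\omega\in\Omega^2_7$ one has $\tau(\alpha\otimes\omega)_j = \omega_{ji}\alpha^i$, evidently nonzero for generic choices of $\alpha,\omega$. Orthogonality of the splitting $\Omega^1_8\otimes\Omega^2_7 = \ker\tau \oplus_\perp(\ker\tau)^\perp$ is then automatic in finite dimensions, and Spin($7$)-equivariance of $\tau$ identifies $(\ker\tau)^\perp$ with $\operatorname{im}\tau = \Omega^1_8$, establishing the first isomorphism.

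For the second isomorphism, a dimension count gives $\dim\ker\tau = 56-8 = 48 = \dim\Omega^3_{48}$, so it suffices to construct a nonzero Spin($7$)-equivariant map $\Psi:\ker\tau \to \Omega^3_{48}$: Schur's lemma on the irreducible $\Omega^3_{48}$ then upgrades it to an isomorphism. A natural candidate is full antisymmetrisation, $\Psi(\gamma)_{ijk} = \gamma_{[i;jk]}$, which on simple tensors is $\alpha\otimes\omega\mapsto\alpha\wedge\omega$. I would check that $\Psi(\gamma)\in\Omega^3_{48}$ for $\gamma\in\ker\tau$, equivalently $\Psi(\gamma)\wedge\Phi=0$; the cleanest route is to observe that $\pi^3_8\circ\Psi: \Omega^1_8\otimes\Omega^2_7 \to \Omega^3_8\cong\Omega^1_8$ is Spin($7$)-equivariant and therefore (by multiplicity one of $\Omega^1_8$ inside $\Omega^1_8\otimes\Omega^2_7$) a scalar multiple of $\tau$, hence vanishes on $\ker\tau$. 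Non-vanishing of $\Psi$ on some element of $\ker\tau$ is again a direct check.

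The main obstacle is justifying the multiplicity-one claim invoked above, equivalently the absence of a second $\Omega^1_8$-summand in $\Omega^1_8\otimes\Omega^2_7$. This can be settled either by a character-theoretic decomposition of the tensor product yielding the Spin($7$)-decomposition $V_8\oplus V_{48}$, or, bypassing representation theory, by a direct index calculation verifying $\alpha\wedge\omega\wedge\Phi=0$ for $\alpha\otimes\omega\in\ker\tau$ using the explicit characterisations of $\Omega^2_7$ and $\Omega^3_{48}$ in \eqref{forms}. Either route, once completed, closes the argument.
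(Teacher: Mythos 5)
The paper does not prove this proposition itself: it quotes it from Dwivedi's paper (\textquotedblleft which is proved in \cite[p.~8]{Dwivedi24}\textquotedblright), so there is no paper proof to compare against line by line. Your blind proof is therefore an independent reconstruction, and it follows what is essentially the standard representation-theoretic argument underlying Dwivedi's statement: use the trace map $\tau$ to split off a copy of $\Omega^1_8$, then identify $\ker\tau$ with $\Omega^3_{48}$ via antisymmetrisation. The first half is clean and correct: equivariance plus Schur gives surjectivity of $\tau$, and the orthogonal splitting then identifies $(\ker\tau)^\perp$ with $\Omega^1_8$.

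The gap you flag at the end is the real one, and you correctly locate the cleanest fix: the Clebsch--Gordan decomposition $V_8\otimes V_7\cong V_8\oplus V_{48}$ for $\mathfrak{spin}(7)$ (the spinor tensor vector), which is standard and gives simultaneously that $V_8$ has multiplicity one and that $\ker\tau$ is the irreducible $V_{48}$. One small remark: you do not actually need irreducibility of $\ker\tau$ in the last step; once you know $\Psi(\ker\tau)\subset\Omega^3_{48}$ and $\Psi|_{\ker\tau}\neq 0$, Schur on the irreducible target forces surjectivity, and the dimension count $48=48$ then gives injectivity for free. However, your proposed alternative route for step 2 --- \textquotedblleft a direct index calculation verifying $\alpha\wedge\omega\wedge\Phi=0$ for $\alpha\otimes\omega\in\ker\tau$\textquotedblright\ --- is imprecise as stated: a simple tensor $\alpha\otimes\omega$ lies in $\ker\tau$ only under the constraint $\alpha^\sharp\lrcorner\omega=0$, and it is not immediate (without already knowing $\ker\tau$ is irreducible) that such simple tensors span $\ker\tau$. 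If you want to avoid invoking Clebsch--Gordan, the computation should instead be carried out for a general $\gamma\in\ker\tau$, using the defining identity $*(\omega\wedge\Phi)=3\omega$ for $\omega\in\Omega^2_7$ from \eqref{forms} together with the trace condition $\gamma_{i;jk}g^{ik}=0$ to show $\gamma_{[i;jk]}$ wedged with $\Phi$ vanishes. Modulo tightening this point, your argument is sound and supplies a proof the paper itself omits.
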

Thus, the torsion tensor $T$ decomposes as $T = T_8 + T_{48}$. The dimensions of the components of this splitting coincide with those of the splitting $\Omega^5 = \Omega^5_8 \oplus\Omega^5_{48}$ (Proposition \ref{PropDecompositionForms}), alluding to the aforementioned relationship between $T$ and $\mathrm{d}\Phi$. Note also that \eqref{EquationT8T48} gives that 
\begin{equation}
    (T_8)_j = T_{i;jk}g^{ik}.
\end{equation}
Here, $T_8$ is defined as a $1$-form, but we will also write $T_8$ for the associated dual vector field. We will also need the following definition of the \emph{divergence} of the torsion tensor.
\begin{defn}
    Let $T$ be the torsion tensor of a \Sp-structure $\Phi$ on a manifold $M$. We write $\operatorname{div}T$ for the divergence of the torsion, which is an element of $\Omega^2_7$ and is defined by 
    \begin{equation}
        \operatorname{div}T_{jk} = g^{nm}\nabla_nT_{m;jk}.
    \end{equation} 
\end{defn}
Here, we also mention the following explicit formula for the Ricci tensor of the metric induced by a \Sp-structure $\Phi$ in terms of its torsion tensor $T$, which also illustrates the fact that torsion-free \Sp-structures induce Ricci-flat metrics.
\begin{prop}[{\cite[Proposition 4.6]{KarigiannisFlows}}]\label{PropRicciTensor}
    Let $T$ be the torsion tensor of a \Sp-structure $\Phi$ on a manifold $M$. Then, the Ricci tensor of the metric $g_\Phi$ induced by $\Phi$ can be expressed as
    \begin{equation}\label{EquationRicci}
        R_{ij} = 4\nabla_i(g^{ap}T_{a;jp}) - 4g^{ap}\nabla_aT_{i;jp} - 8T_{i;jb}g^{ap}g^{bq}T_{a;qp} + 8T_{a;jb}g^{ap}g^{bq}T_{i;qp}.
    \end{equation}
\end{prop}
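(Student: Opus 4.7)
The plan is to extract the Ricci tensor from the curvature of $g_\Phi$ by antisymmetrizing the second covariant derivative of $\Phi$ and then performing a careful contraction with $\Phi$ itself. Because $\Phi$ determines $g_\Phi$, the entire Riemann tensor acts on $\Phi$, and in dimension $8$ with $\Phi$ self-dual the action $R_{ij}{}^p{}_q \Phi$ is rich enough that an appropriate trace recovers exactly $\operatorname{Ric}$.

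First, I would differentiate the defining identity \eqref{EquationDefTorsion} once more to obtain an expression for $\nabla_n \nabla_m \Phi_{ijkl}$, substituting every occurrence of $\nabla_n \Phi$ produced by the product rule using \eqref{EquationDefTorsion} itself. This produces a sum of terms linear in $\nabla T$ (four of them, one for each free index on $\Phi$) and a sum of terms quadratic in $T$ coming from applying \eqref{EquationDefTorsion} inside the diamond expansion. Symbolically,
\begin{equation*}
\nabla_n \nabla_m \Phi_{ijkl} \;=\; \bigl(\nabla_n T_m \bigr)\diamond\Phi \;+\; T_m \diamond (T_n \diamond \Phi),
\end{equation*}
where I have suppressed indices for clarity; each $\diamond$ on the right produces four terms by the Leibniz rule over the four free indices.

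Next, I would antisymmetrize in $(n,m)$ and apply the Ricci identity
\begin{equation*}
[\nabla_n,\nabla_m]\Phi_{ijkl} \;=\; -R_{nm}{}^{p}{}_{i}\Phi_{pjkl} - R_{nm}{}^{p}{}_{j}\Phi_{ipkl} - R_{nm}{}^{p}{}_{k}\Phi_{ijpl} - R_{nm}{}^{p}{}_{l}\Phi_{ijkp}.
\end{equation*}
Setting this equal to the antisymmetrization of the expression from the previous step gives a curvature-torsion identity. To isolate $R_{ij}$ I would contract both sides with $\Phi^{m}{}_{jkl}$ and contract the surviving indices using the Spin($7$) identities $\Phi_{aj kl}\Phi_{b}{}^{jkl}=c_{1}g_{ab}$ and $\Phi_{ab kl}\Phi^{cd kl}= c_{2}(\delta^{c}_{a}\delta^{d}_{b}-\delta^{d}_{a}\delta^{c}_{b})+c_{3}\Phi_{ab}{}^{cd}$ (for certain universal constants). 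The algebraic Bianchi identity for $R$ then collapses three of the four Riemann terms on the right into Ricci contributions, and fixes the overall coefficient.

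The remaining work is bookkeeping: expanding the quadratic-in-$T$ terms using the same Spin($7$) contraction identities, and then using the fact that each $T_m\in \Omega^2_7$ (so $*(T_m\wedge\Phi)=3T_m$, equivalently $T_{m;ap}\Phi^{ap}{}_{bc}=\text{(mult. of) }T_{m;bc}$) to collapse the pairings down to the two combinations appearing in \eqref{EquationRicci}. I expect the main obstacle to be precisely this last step: organizing the many quadratic-in-$T$ contractions, some of which come from $T_m \diamond (T_n\diamond\Phi)$ and some from the contracted $\Phi\cdot\Phi$ identities, so that they cancel or combine into the two specific tensors $T_{i;jb}g^{ap}g^{bq}T_{a;qp}$ and $T_{a;jb}g^{ap}g^{bq}T_{i;qp}$ with coefficients $-8$ and $+8$. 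The derivative terms are comparatively straightforward, since the antisymmetrization immediately produces the combination $\nabla_i(g^{ap}T_{a;jp})-g^{ap}\nabla_a T_{i;jp}$ once the contractions are resolved.
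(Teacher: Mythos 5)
The paper does not prove this proposition: it is imported verbatim from Karigiannis's \emph{Flows of $\mathrm{Spin}(7)$-structures} (cited as Proposition 4.6 there), so there is no in-paper argument to compare against. Your proposed route --- differentiate $\nabla_m\Phi = (T_m\diamond\Phi)$, antisymmetrize and apply the Ricci identity to obtain a $\mathrm{Spin}(7)$ Bianchi-type identity relating $\nabla T$ and quadratic-in-$T$ terms to the Riemann tensor acting on $\Phi$, then contract against $\Phi$ and collapse via the first Bianchi identity and the $\mathrm{Spin}(7)$ contraction identities --- is exactly the strategy used in that reference, and the schematic identity $\nabla_n\nabla_m\Phi = (\nabla_n T_m)\diamond\Phi + T_m\diamond(T_n\diamond\Phi)$ is the correct starting point.
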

We now turn to \emph{torsion forms}. A \Sp-structure $\Phi$ is \emph{torsion-free} if and only if $\mathrm{d}\Phi = 0$, so the torsion of a \Sp-structure is captured in the $5$-form $\mathrm{d}\Phi$. By \eqref{Equation5Forms}, $\mathrm{d}\Phi = (\mathrm{d}\Phi)_8 +(\mathrm{d}\Phi)_{48}$, and \eqref{EquationLambda58} tells us that we can write $(\mathrm{d}\Phi)_8 = \alpha \wedge \Phi$ for some $\alpha \in \Omega^1_8$. We denote this $\alpha$ by $T^1_8$ and define $T^5_{48} \coloneq (\mathrm{d}\Phi)_{48}$.
\begin{defn}
Let $\Phi$ be a \Sp-structure.
    Expressing \begin{equation}
        \mathrm{d}\Phi = T^1_8 \wedge \Phi + T^5_{48},
    \end{equation}
    as above, we call the forms $T^1_8$ and $T^5_{48}$ the \emph{torsion forms} of the \Sp-structure $\Phi$.
\end{defn}

Note that, by Theorem \ref{equivalence}, a \Sp-structure is torsion-free if and only if both torsion forms vanish, but the refinement of having two torsion forms rather than one torsion tensor also allows us to consider the cases when just one of the torsion forms vanishes. 
\begin{defn}
    Let $\Phi$ be a \Sp-structure with $\mathrm{d}\Phi = T^1_8 \wedge \Phi + T^5_{48}$. If $T^1_8 = 0$, we say that $\Phi$ is \emph{balanced}, and if $T^5_{48} = 0$, we say that $\Phi$ is \emph{locally conformally parallel}. 
\end{defn}
We now have two notions of torsion, arising from considering $\mathrm{d}\Phi$ and $\nabla \Phi$. By Theorem \ref{equivalence}, these two notions contain essentially the same information, and we make this concrete by expressing the torsion tensor in terms of the two torsion forms.

We first show that the torsion form $T^1_8$ is a constant multiple of the $1$-form $(T_8)_j \coloneq T_{i;jk}g^{ik}.$ The fact that they are constant multiples of each other is well-known, but does not seem to have appeared in the literature, and here we explicitly compute the constant factor. This will be useful when we deal with explicit examples of flows in Section \ref{SectionExamples}, as $T^1_8$ is easier to compute than $T_8$, and we will actually have already computed $T^1_8$ along the way to computing the torsion tensor $T$.
\begin{prop}\label{PropT18T8}
 Let $M$ be a manifold with \Sp-structure $\Phi$, torsion tensor $T$ and torsion $1$-form $T^1_8$. The $T_8$-component of $T$ (see Proposition \ref{PropTorsionSplitting}), can be expressed as 
 \begin{equation}
     T_8 = -\frac{7}{16}T^1_8.
 \end{equation}
 \end{prop}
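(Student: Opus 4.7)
The plan is to observe, via Schur's lemma, that the map sending the torsion tensor $T \in \Omega^1_8 \otimes \Omega^2_7$ to $\mathrm{d}\Phi \in \Omega^5$ (through Definition~\ref{DefTorsion} and the de Rham differential) is Spin(7)-equivariant, and so must respect the irreducible decompositions $\Omega^1_8 \otimes \Omega^2_7 \cong \Omega^1_8 \oplus \Omega^3_{48}$ and $\Omega^5 \cong \Omega^5_8 \oplus \Omega^5_{48}$. Since $\Omega^1_8$ and $\Omega^5_8$ are isomorphic irreducible Spin(7)-representations, Schur immediately implies that $T_8$ and $T^1_8$ are proportional; the task reduces to identifying the constant of proportionality.

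I will compute the scalar $(\mathrm{d}\Phi)_{mnijk}\Phi^{mijk}$ in two ways. On the one hand, substituting $\mathrm{d}\Phi = T^1_8 \wedge \Phi + T^5_{48}$, the $T^5_{48}$-contribution vanishes by Schur (it would define a nonzero Spin(7)-equivariant map $\Omega^5_{48} \to \Omega^1_8$ between non-isomorphic irreducibles), and the $T^1_8 \wedge \Phi$ part can be expanded via the definition of the wedge product and contracted term-by-term against $\Phi^{mijk}$. Using the Spin(7)-invariant contraction identities
\[
\Phi_{nabc}\Phi^{mabc} = 42\,\delta^m_n, \qquad \Phi_{abcd}\Phi^{efcd} = 6\bigl(\delta^e_a\delta^f_b - \delta^e_b\delta^f_a\bigr) + 4\,\Phi_{ab}{}^{ef},
\]
together with $\Phi_{abcd}g^{ac}=0$, this yields an explicit multiple of $(T^1_8)_n$.

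On the other hand, expanding $(\mathrm{d}\Phi)_{mnijk}$ as an antisymmetrization of $\nabla\Phi$, I will use Proposition~\ref{PropTorsion} together with the definition $(T_8)_n = T_{m;nk}g^{mk}$ to observe that $(\nabla_m\Phi_{nijk})\Phi^{mijk} = 96\,(T_8)_n$. The $\nabla_n$-summand in the antisymmetrization of $\nabla\Phi$ contributes zero because $\nabla_n|\Phi|^2 = 0$, while each of the four remaining summands reduces to the same $96(T_8)_n$ after relabelling dummy indices and exploiting the total antisymmetry of $\Phi$. Comparing the two expressions for $(\mathrm{d}\Phi)_{mnijk}\Phi^{mijk}$ then determines the proportionality constant, which computes to $-\tfrac{7}{16}$.

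The main obstacle is the careful tracking of permutation signs when rearranging indices of $\Phi$: this arises both in the five-term expansion of the wedge product $T^1_8 \wedge \Phi$ and in the reduction of the four non-vanishing $\nabla\Phi$-summands to a common form. The two contraction identities for $\Phi$ must also be established: the first follows from $|\Phi|^2=14$ and Spin(7)-invariance via a trace computation; the second from a similar invariance argument combined with a test against an element of $\Omega^2_7$, for which $\Phi_{abef}X^{ef} = 6\,X_{ab}$.
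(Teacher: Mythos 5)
Your proof is correct, and it takes a genuinely different route from the paper to pin down the constant. The paper, like you, invokes Schur's lemma to conclude that $T_8 = c\,T^1_8$ for a universal constant $c$, but then determines $c$ by evaluating both sides on a single concrete \Sp-structure drawn from the literature (Niedzialomski's Example~4.8, where $T^1_8 = \frac{2}{7}e^6$ and $T_8 = -\frac{1}{8}e^6$). You instead compute $c$ universally by contracting $(\mathrm{d}\Phi)_{mnijk}$ against $\Phi^{mijk}$ in two ways. Your index work checks out: the $\nabla$-side gives $4\cdot 96\,(T_8)_n = 384\,(T_8)_n$ (the $\nabla_n$ term vanishes since $|\Phi|^2=14$ is constant, and the other four reduce to $96(T_8)_n$ by total antisymmetry of $\Phi$), while the $T^1_8\wedge\Phi$ side gives $(4\cdot 42 - 336)\,(T^1_8)_n = -168\,(T^1_8)_n$, yielding $-168/384 = -7/16$ as required. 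The two contraction identities you cite are consistent with the paper's normalisation $\Phi\wedge\Phi = 14\,\mathrm{vol}_\Phi$ (so $\Phi_{ijkl}\Phi^{ijkl}=336$) and with $*(X\wedge\Phi)=3X$ for $X\in\Omega^2_7$. The trade-off: your argument is self-contained and avoids any dependence on an external example, at the cost of some careful sign-tracking; the paper's is shorter but imports a computation from elsewhere.
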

\begin{proof}
Let $\boldsymbol{k}$ denote the k-dimensional irreducible \Sp-representation.
    Recall that the torsion tensor $T$ lies in the space $\Omega^1_8 \otimes \Omega^2_7$, and that we can view this space as a \Sp-representation isomorphic to $\boldsymbol{8}\oplus \boldsymbol{48}$ \cite{Dwivedi24}. Moreover, the $5$-form $\mathrm{d}\Phi = T^1_8 \wedge \Phi + T^5_{48}$ lies in $\Omega^5 \cong \boldsymbol{8} \oplus \boldsymbol{48}$. Now, by the definition of $T_8$ as the trace given in Proposition \ref{PropTorsionSplitting}, we can view $T_8$ as a linear \Sp-equivariant map $\boldsymbol{8}\oplus \boldsymbol{48} \to \boldsymbol{8}$. Likewise, we can view $T^1_8$ as such a map and, by Schur's lemma, the space of such maps is one-dimensional. So, \begin{equation}
        T_8 = cT^1_8,
    \end{equation}
    for some universal constant $c$, which can be determined by computing both terms for a single \Sp-structure. More precisely, we compute both terms for the \Sp-structure given in \cite[Example 4.8]{Niedzialomski}, with respect to an orthonormal basis of $1$-forms $e^1,\cdots,e^8$. We compute that $T^1_8 = \frac{2}{7}e^6$ and $T_8 = -\frac{1}{8}e^6$, which gives $T_8 = -\frac{7}{16}T^1_8$, as required.
\end{proof}

We now describe how to express the full torsion tensor $T$ in terms of the two torsion forms $T^1_8$ and $T^5_{48}$, by expressing the results of \cite[Proposition 2.2 and Section 2.3.2]{Niedzialomski} in terms of torsion forms.
\begin{prop}
    Let $M$ be an $8$-manifold with \Sp-structure $\Phi$ and a local orthonormal frame $\{e_1,\cdots,e_8\}$.
    Then, 
    \begin{equation}
        T_{m;ab} = \left(\frac{1}{2}(e_m \lrcorner T^c)_7\right)_{ab},
    \end{equation}
    where $T^c = -\frac{1}{6}*(T^1_8 \wedge \Phi) + *T^5_{48}$ and $\pi_7$ denotes the projection from $\Omega^2$ to $\Omega^2_7$.
\end{prop}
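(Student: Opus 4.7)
The plan is to leverage the intrinsic-torsion formulation of \cite{Niedzialomski} and translate it into the torsion-form language used in this paper. The starting point is \cite[Proposition 2.2]{Niedzialomski}, which already expresses the torsion tensor of a \Sp-structure in the form
\[
T_{m;ab} = \tfrac{1}{2}\bigl(\pi_7(e_m \lrcorner T^c)\bigr)_{ab},
\]
where $T^c$ is a certain $3$-form encoding the intrinsic torsion of $\Phi$. The proposition thus reduces to identifying $T^c$ with $-\tfrac{1}{6}*(T^1_8 \wedge \Phi) + *T^5_{48}$ in our notation.

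To do so, I would appeal to \cite[Section 2.3.2]{Niedzialomski}, where the intrinsic torsion is explicitly related to the components of $\mathrm{d}\Phi$. Since $\mathrm{d}\Phi = T^1_8 \wedge \Phi + T^5_{48}$ lies in $\Omega^5_8 \oplus \Omega^5_{48}$, and Hodge duality identifies this with $\Omega^3_8 \oplus \Omega^3_{48}$ --- the same space where $T^c$ lives --- a representation-theoretic argument via Schur's lemma (analogous to that used in the proof of Proposition \ref{PropT18T8}) forces
\[
T^c = c_1 *(T^1_8 \wedge \Phi) + c_2 *T^5_{48}
\]
for some universal constants $c_1, c_2$. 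Indeed, the only nontrivial equivariant maps $\Omega^1_8 \to \Omega^3_8$ and $\Omega^5_{48} \to \Omega^3_{48}$ are scalar multiples of $\alpha \mapsto *(\alpha \wedge \Phi)$ and $\beta \mapsto *\beta$ respectively, while the two cross-terms vanish by Schur.

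The constants can then be pinned down either directly from Niedzialomski's Section 2.3.2, once conventions are aligned, or by computing both sides explicitly on a test \Sp-structure. The natural candidate is the structure of \cite[Example 4.8]{Niedzialomski} already used in the proof of Proposition \ref{PropT18T8}: there, $T^1_8$ and $T^5_{48}$ can be read off directly from $\mathrm{d}\Phi$, and $T_{m;ab}$ computed from \eqref{EquationTorsionTensor}, so substituting into both sides of the claimed identity and matching coefficients fixes the scalars to $c_1 = -\tfrac{1}{6}$ and $c_2 = 1$.

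The main obstacle I anticipate is convention-matching. The coefficient $-\tfrac{1}{6}$ (as opposed to $\pm 1$) reflects that $\alpha \mapsto \alpha \wedge \Phi$ is not an isometric embedding of $\Omega^1_8$ into $\Omega^5_8$ (a short computation gives $|\alpha \wedge \Phi|^2 = 7|\alpha|^2$), so some care is needed to correctly track this normalisation factor, together with Niedzialomski's conventions for the Hodge star and for the reference form $\Phi_0$, when translating between the two formulations. Beyond this bookkeeping, the proof is essentially a restatement of Niedzialomski's results and requires no further analytic input.
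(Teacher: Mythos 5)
Your proposal is correct and would yield a valid proof, but it takes a genuinely different route from the paper's. The paper does not invoke Schur's lemma here at all: it takes Niedzialomski's explicit expression $T^c = -\delta\Phi - \tfrac{7}{6}*(\theta\wedge\Phi)$ with Lee form $\theta = \tfrac{1}{7}*(\delta\Phi\wedge\Phi)$, then proves the single identity $\theta = T^1_8$ by a direct Hodge-star computation (using $*(\Phi\wedge*(\Phi\wedge\eta)) = -7\eta$ and $\Phi\wedge*T^5_{48} = 0$), after which the stated formula follows by substituting $\delta\Phi = -*(T^1_8\wedge\Phi) - *T^5_{48}$ and simplifying. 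Your approach instead determines the \emph{form} of $T^c$ a priori by Schur's lemma applied to the two irreducible summands $\boldsymbol{8}$ and $\boldsymbol{48}$, and then fixes the universal constants $c_1, c_2$ by evaluating on a test structure (or by aligning conventions with Niedzialomski). What your approach buys is structural clarity and robustness to convention mismatches — you don't need to trust Niedzialomski's exact coefficients, only the existence of the formula — at the cost of an auxiliary explicit computation to pin down the constants. What the paper's approach buys is a self-contained algebraic derivation with no test case required, once the $-7\eta$ identity is granted. Your observation that $|\alpha\wedge\Phi|^2 = 7|\alpha|^2$ (which accounts for the unusual $-\tfrac{1}{6}$) is correct and is indeed the source of the normalisation; in the paper's derivation this same factor of $7$ enters through the identity $*(\Phi\wedge*(\Phi\wedge\eta)) = -7\eta$.
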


\begin{proof}
    In \cite[Section 2.3.2]{Niedzialomski}, Niedzialomski shows the above result, but with $T^c$ defined as 
    \begin{equation}
        T^c = -\delta \Phi - \frac{7}{6} * (\theta \wedge \Phi), \qquad
\theta = \frac{1}{7} * (\delta \Phi \wedge \Phi),
    \end{equation}
    where $\theta$ is called the \emph{Lee form} of $\Phi$. Here, we show that $\theta = T^1_8$, which gives the result.
     We have the following identity for any $1$-form $\eta \in \Omega^1$(\cite[Proposition 4.2.1]{KarigiannisDeformations}):
    \begin{equation}
        *(\Phi \wedge*(\Phi\wedge \eta)) = -7\eta.
    \end{equation}
    Now, $\mathrm{d}\Phi = T^1_8 \wedge \Phi +*T^5_{48}$ so 
    \[\delta\Phi = -*\mathrm{d}\Phi = -*(T^1_8 \wedge \Phi) + *T^5_{48}.\]
     Since $T^5_{48} \in \Omega^5_{48}$, \eqref{forms} gives that $  \Phi \wedge  *T^5_{48} = 0$, so 
    \begin{equation}
    \begin{split}
         *(\delta \Phi \wedge \Phi) &= -*(\Phi \wedge *d\Phi)\\
                                    &=-*(\Phi \wedge (*T^5_{48} +*(\Phi \wedge T^1_8)))\\
                                    &= -*(\Phi \wedge *(\Phi \wedge T^1_8))\\
                                    & = 7T^1_8.
    \end{split}
    \end{equation}
    Rearranging gives that $T^1_8 = \frac{1}{7}*(\delta\Phi \wedge \Phi) = \theta$, and substituting this into the equation for $T^c$ gives the result.
\end{proof}

Thi expression of the torsion tensor in terms of torsion forms is more amenable to explicit computation than the formula given by \eqref{EquationTorsionTensor}, and will be useful in Section \ref{SectionExamples} when we compute explicit examples of the torsion tensor in order to find examples of solutions to various flow equations. Before coming to that, we discuss general flows of \Sp-structures, and introduce the particular flow that we are interested in.

\subsection{A gradient flow of \texorpdfstring{\Sp-structures}{Spin(7)-structures}}\label{SectionFlows}

Geometric flows of \Sp-structures were first studied in \cite{KarigiannisFlows}, where Karigiannis considers the most general flow of \Sp-structures, using the discussion of arbitrary deformations of \Sp-structures that we outlined in Section \ref{SubsectionFormDecompositions}.
Recall that an arbitrary infinitesimal deformation of a \Sp-structure $\Phi$ can be written as $A \diamond \Phi$ for some $A\in S^2 \oplus \Omega^2_7$. Thus, an arbitrary flow of \Sp-structures can be written as 
\begin{equation}\label{EquationGeneralFlow}
    \frac{\partial}{\partial t}\Phi(t) = (A(t) \diamond_t \Phi(t)) = ((h(t)+X(t))\diamond_t \Phi(t) ),
\end{equation}
where $A(t)$ is a one-parameter family of tensors in $S^2 \oplus \Omega^2_7$ and the symbol $\diamond_t$ refers to the fact that the contraction defined by $\diamond_t$ is with respect to the metric $g_{\Phi_t}$ induced by the \Sp-structure $\Phi_t$. We will often write $A_t$ instead of $A(t)$ and $\Phi_t$ instead of $\Phi(t)$. To avoid an overload of notation, we will often omit writing the $t$-dependence of tensors altogether, and note that unless otherwise stated, all terms appearing in the flow equations that follow are dependent on $t$. Karigiannis also computes the induced evolution equations of various metric and \Sp-related quantities under the most general flow of \Sp-structures \cite[Section 3]{KarigiannisFlows}.
In particular, we have that 
\begin{equation}
  \frac{\partial}{\partial t}g_{ij} = 2h_{ij}  
\end{equation}
and
\begin{equation}
  \frac{\partial}{\partial t} T_{m;\alpha\beta}
= A_{\alpha p} \, g^{pq} T_{m;q\beta}
- A_{\beta p} \, g^{pq} T_{m;q\alpha}
+ \pi_7 \left( \nabla_\beta h_{\alpha m}
- \nabla_\alpha h_{\beta m}
+ \nabla_m X_{\alpha\beta} \right),  
\end{equation}
from which the evolution equations of all other metric and torsion-related quantities can be obtained. In particular, this shows that only the symmetric part of $A$ affects the induced metric, and this combined with Corollary \ref{CorollaryIsomorphism} shows that deformations in the $\Omega^4_7$-direction do not affect the metric.

Motivated by finding torsion-free \Sp-structures, it is natural to consider specific flows which have torsion-free \Sp-structures as critical points. Perhaps the most natural flow to consider is a gradient flow of the norm of the torsion tensor, in an attempt to decrease the torsion as quickly as possible. This is the motivation behind the so-called gradient flow of \Sp-structures, introduced by Dwivedi \cite{Dwivedi24}, which we recall now. All details and derivations can be found in Sections $3$ to $5$ of \cite{Dwivedi24}.

\begin{defn}
    Given a compact $8$-manifold with Spin($7$)-structure, $(M, \Phi)$, the energy functional $E$ is defined as 
    \begin{equation}
        E(\Phi) = \frac{1}{2}\int_M \lvert T_\Phi \rvert^2 \mathrm{vol}_\Phi,
    \end{equation}
    where $T_\Phi$ is the torsion tensor of the Spin($7$)-structure $\Phi$, and the norm and volume form are those induced by the Riemannian metric $g_\Phi$. To save on notation, we will write $T$ for $T_\Phi$.
\end{defn}
Dwivedi computes the gradient flow of this energy functional, obtaining the following evolution equation for $\Phi(t)$.

\begin{defn}[\cite{Dwivedi24}]
    Let $(M,\Phi_0)$ be a compact $8$-manifold with initial Spin($7$)-structure $\Phi_0$. The gradient flow of \Sp-structures is the following initial value problem:

    \begin{equation}
            \begin{cases}
                 \frac{\partial}{\partial t} \Phi(t) = (-\operatorname{Ric} + 2(\mathcal{L}_{T_8}g) + T\star T - |T|^2g + 2 \text{ div}T)_t \diamond_t \Phi(t),\\
                 \Phi(0) = \Phi_0, \\
                 
            \end{cases} \tag{GF}
        \end{equation}
        where each term is induced by the Spin($7$) structure $\Phi_t$
        and 
        \begin{equation}
            (T \star T)_{ij} = 4T_{b;il}T_{j;lb} + 4T_{b;jl}T_{i;lb} - 4T_{j;il}T_{b;lb} - 4T_{i;jl}T_{b;lb} + 2T_{i;lb}T_{j;lb}.
        \end{equation}
        This flow is the negative gradient flow of the functional $2E(\Phi)$.
\end{defn}
Dwivedi proves short-time existence of this flow in \cite{Dwivedi24}, as well as non-existence of compact expanding solitons. 
So far, no explicit solutions have been found. One difficulty with finding explicit solutions to this flow is the computational difficulties of evaluating tensorial quantities like $\operatorname{div}T$ and $\mathcal{L}_{T_8}g$. To avoid some of this computational difficulty, we express the terms of the above flow equation in terms of \emph{torsion forms} (Section \ref{SectionReformulation}), which are easier to compute, and then use this formulation to construct explicit solutions (Section \ref{SectionExamples}). In doing so, we will use the soliton equation and rescaling properties obtained in \cite{Dwivedi24}, which we mention now.
\begin{defn}
    A soliton solution for \eqref{GF} on an $8$-manifold $M$ is a triple $(\Phi(t),Y(t),\lambda(t))$, where $Y \in \Gamma(TM)$ and $\lambda \in \mathbb{R}$ such that 
    \begin{equation}\label{EquationSoliton}
     (-\operatorname{Ric} + 2(\mathcal{L}_{T_8}g) + T\star T - |T|^2g + 2\operatorname{div}T)_t\diamond_t \Phi(t) = \lambda \Phi(t) + \mathcal{L}_Y \Phi(t).   
    \end{equation}
\end{defn}
Solitons are of particular interest since they are in one-to-one correspondence with self-similar solutions, i.e.,
\begin{equation}
    \Phi(t) = \lambda(t)^4 f(t)^* \Phi(0),
\end{equation}
where $\lambda(t)$ are scalings and $f(t) : M \rightarrow M$ are diffeomorphisms.

For many geometric flows, solitons are often local models for singularities, and so studying solitons provides information about the possible geometry around the formation of singularities. Studying this for the gradient flow of \Sp-structures would be interesting future work.

A soliton is called \emph{expanding} if $\lambda >0$, \emph{steady} if $\lambda =0$ and \emph{shrinking} if $\lambda <0$.
In discussing the effect of rescaling the \Sp-structure, to avoid fractional powers of the scale factor, we rescale to $\tilde\Phi = c^4\Phi$ for some $c>0$ constant, and decorate any term induced by $\tilde{\Phi}$ with its own tilde. Note that $\tilde \Phi$ also defines a \Sp-structure, and that this rescales the metric by $\tilde g = c^2g$ and so $\tilde{g}^{-1} = c^{-2}g^{-1}$. Moreover, from the definition of the torsion tensor (Definition \ref{DefTorsion}), and using that the diamond operator involves the inverse metric, we see that $\tilde{T} = c^2T$. Using this and writing  $A = (-\operatorname{Ric} + 2(\mathcal{L}_{T_8}g) + T\star T - |T|^2g + 2 \text{ div}T)$, we see that $\tilde{A} = A$ and so 
\begin{equation}\label{EquationRescaledFlow}
    \tilde{A} \tilde{\diamond} \tilde{\Phi}  = c^2(A \diamond \Phi).
\end{equation}
We will refer back to this when discussing explicit examples of solitons in Section \ref{SectionExamples}.

\subsection{Other flows of \texorpdfstring{\Sp-structure}{Spin(7)-structures}}

There are two other flows of \Sp-structures that have received attention in the literature, and we briefly mention each here. The first is the so-called \emph{harmonic flow}, introduced in \cite{DLE24}, and given by the following initial value problem:
\begin{equation}
  \begin{cases}
\displaystyle \frac{\partial \Phi}{\partial t} = \operatorname{div} T \diamond \Phi, \\
\Phi(0) = \Phi_0.
\end{cases}  
\end{equation}

This flow enjoys several nice analytic properties, but does not evolve the induced metric $g_\Phi$, since $\operatorname{div}T \in \Omega^2_7$. For this reason, the harmonic flow is unlikely to be useful in finding torsion-free \Sp-structures, since in doing so it would be necessary to start with an initial \Sp-structure whose induced metric was already Ricci-flat. Nevertheless, it may still lead to interesting information about the space of \Sp-structures, and since the harmonic flow has nice properties, it is natural to consider a modification for which the induced metric \emph{does} evolve. A candidate is the \emph{Ricci-harmonic flow},  introduced in \cite[Remark 4.13]{Dwivedi24}, which is given by the following initial value problem:
\begin{equation}\label{EquationRicciHarmonic}
 \begin{cases}
\displaystyle \frac{\partial \Phi}{\partial t} = (-\operatorname{Ric}+\operatorname{div} T )\diamond \Phi, \\
\Phi(0) = \Phi_0.
\end{cases}   
\end{equation}

The Ricci-harmonic flow induces precisely the Ricci flow on the induced metric, and it also has short-time existence and uniqueness \cite{Dwivedi24}. In section \ref{SectionExamples}, we will also obtain the first explicit solutions to this flow.

\section{A reformulation of the gradient flow}\label{SectionReformulation}

In Section \ref{SectionPrelims}, we showed how to express the torsion tensor $T$ of a \Sp-structure $\Phi$ in terms of its torsion forms $T^1_8$ and $T^5_{48}$, which are easier to compute. This gives us a tangible way to compute the torsion tensor in explicit examples, but in order to work with explicit examples of the gradient flow \eqref{GF}, we also need to be able to compute terms like $T\star T$ and $\mathcal{L}_{T_8}g$, which appear in the evolution equation \eqref{GF}. In this section, we use a representation theory approach to directly express these terms in terms of the torsion forms, without first computing the full torsion tensor. This method is originally due to Bryant \cite[Remark 10]{BryantRemarks} in the $G_2$-setting, and was used by Fowdar to express the Ricci tensor and scalar curvature in terms of \Sp-torsion forms in \cite[Section 4]{FowdarSpin7}.
 Here, we discuss Fowdar's argument and use it to express the other terms of the evolution equation \eqref{GF} in terms of torsion forms. We first fix some notation.
Let $S^2_0(\mathbb{R}^8)$ denote the space of traceless, symmetric $(0,2)$-tensors and define
\begin{equation}
    \begin{split}
        \mathsf{i}: \langle g_\Phi \rangle \oplus S^2_0(\mathbb{R}^8) &\to \Lambda^4_1 \oplus \Lambda^4_{35},\\
        a\circ b &\mapsto a \wedge *(b\wedge \Phi) + b \wedge *(a \wedge \Phi).
    \end{split}
\end{equation}
Note that this map is defined for pure symmetric tensors of the form $a \circ b$, where $a$ and $b$ are $1$-forms, and we extend it linearly to a map on the whole space of symmetric tensors.
We denote by $\mathsf{j}$ the inverse map of $\mathsf{i}$, extended to the zero map on $\Lambda^4_7 \oplus \Lambda^4_{27}$. 
Note that
\[
\mathsf{i} \circ \mathsf{j}(\alpha) = \pi^4_{1\oplus 35}(\alpha) = \frac{g_\Phi(\alpha,\Phi)}{| \Phi|^2}\Phi + \frac{\alpha -*\alpha}{2}.
\]

In \cite{FowdarSpin7}, Fowdar uses a representation-theoretic argument to determine a basis for the space of symmetric first- and second-order \Sp-invariants that can contribute to flows (i.e. \Sp-invariant tensors which depend on $\Phi$ and first and second derivatives of $\Phi$ and which arise as endomorphisms of the tangent bundle). We present this basis in the following proposition, which is essentially contained in the proof of \cite[Proposition 4.1]{FowdarSpin7} and \cite[Section 6]{Fowdar_SU2}, and we extend it to provide a basis for the skew-symmetric invariants. We also outline the method of proof, combining \cite{FowdarSpin7} and \cite{Fowdar_SU2}.

\begin{prop}
    Let $(M,\Phi)$ be a manifold with \Sp-structure. The space of first order \Sp-invariants is generated by:
    \begin{equation}
        T^1_8 \text{ and } T^5_{48}.
    \end{equation}
    The space of linear second order \Sp-invariants which contribute to flows is generated by:
    \begin{itemize}
        \item $\delta T^1_8g_\Phi,$

        \item $ \mathsf{j}(\delta(T^1_8 \wedge \Phi)),$
        \item $\mathsf{j}(\delta T^5_{48}),$
        \item $\pi_7(\mathrm{d}T^1_8).$
    \end{itemize}
    The space of quadratic first order \Sp-invariants which contribute to flows is generated by:
     \begin{itemize}
        \item $| T^1_{8}|^2g_\Phi,$
        \item $ | T^5_{48}|^2g_\Phi,$
        \item $\mathsf{j}((T^1_8 \wedge *T^5_{48})),$
        \item $\mathsf{j}(*(T^1_8 \wedge\Phi)\wedge T^1_8),$
        \item $g_\Phi(\cdot \lrcorner*T^5_{48},\cdot\lrcorner*T^5_{48}),$
        \item $\mathsf{j}((e_i \lrcorner *T^5_{48})\wedge (e_i \lrcorner *T^5_{48})),$ where $\{e_i\}_{i=1}^8$ is a local orthonormal basis,
        \item $\pi_7(*(T^1_8 \wedge T^5_{48})).$
    \end{itemize}
    
\end{prop}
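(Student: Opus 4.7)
The plan is to combine $\mathrm{Spin}(7)$-representation theory with Schur's lemma. First, I would identify the space of tensors that ``contribute to flows'' as $\langle g_\Phi \rangle \oplus S^2_0 \oplus \Omega^2_7$, which as a \Sp-representation decomposes into irreducibles as $\boldsymbol{1} \oplus \boldsymbol{35} \oplus \boldsymbol{7}$. By Schur's lemma, the dimension of the space of \Sp-equivariant linear maps from a source representation $V$ into this target equals the sum of the multiplicities of $\boldsymbol{1}$, $\boldsymbol{7}$, and $\boldsymbol{35}$ inside $V$. The whole argument then reduces to: (a) decomposing the relevant source space into irreducibles, thereby bounding the dimension of each isotypic component; and (b) exhibiting linearly independent equivariant maps realising those bounds.

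For the first-order invariants, the source is the intrinsic torsion, which by Proposition \ref{PropTorsionSplitting} lies in $\Omega^1_8 \otimes \Omega^2_7 \cong \boldsymbol{8} \oplus \boldsymbol{48}$. This decomposition is recorded exactly by $T^1_8$ and $T^5_{48}$, so these forms already generate all first-order invariants. For the second-order linear invariants, the source consists of $\nabla T^1_8 \in \Omega^1 \otimes \Omega^1 \cong \boldsymbol{8} \otimes \boldsymbol{8}$ and $\nabla T^5_{48} \in \boldsymbol{8} \otimes \boldsymbol{48}$. The first factor splits as $\boldsymbol{1} \oplus \boldsymbol{7} \oplus \boldsymbol{21} \oplus \boldsymbol{35}$, contributing one copy of each of the three target irreps. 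The key new computation is the decomposition of $\boldsymbol{8} \otimes \boldsymbol{48}$, which I would derive either by direct character computation on \Sp or by leveraging $\boldsymbol{7} \otimes \boldsymbol{8} \cong \boldsymbol{8} \oplus \boldsymbol{48}$ to reduce to simpler tensor products. Having obtained the multiplicities, one shows the listed generators $\delta T^1_8 \cdot g_\Phi$, $\mathsf{j}(\delta(T^1_8 \wedge \Phi))$, $\mathsf{j}(\delta T^5_{48})$ and $\pi_7(\mathrm{d}T^1_8)$ are linearly independent and realise the corresponding multiplicity spaces.

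For the quadratic first-order invariants, the same method applies to the source $\mathrm{Sym}^2(\boldsymbol{8} \oplus \boldsymbol{48}) = \mathrm{Sym}^2(\boldsymbol{8}) \oplus (\boldsymbol{8} \otimes \boldsymbol{48}) \oplus \mathrm{Sym}^2(\boldsymbol{48})$. Decomposing each summand, counting multiplicities of $\boldsymbol{1}$, $\boldsymbol{7}$ and $\boldsymbol{35}$, and displaying explicit equivariant maps built from $T^1_8$, $T^5_{48}$, the Hodge star, the wedge product and $\mathsf{j}$ or $\pi_7$, produces the seven generators in the third list.

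The main obstacle is the representation-theoretic decomposition of $\boldsymbol{8} \otimes \boldsymbol{48}$ and especially of $\mathrm{Sym}^2(\boldsymbol{48})$ into \Sp-irreducibles, together with the accompanying linear-independence check for the listed generators. Since the symmetric-target cases ($\boldsymbol{1}$ and $\boldsymbol{35}$) are already covered in \cite{FowdarSpin7} and \cite{Fowdar_SU2}, the genuinely new content is the $\boldsymbol{7}$-valued invariants $\pi_7(\mathrm{d}T^1_8)$ and $\pi_7(*(T^1_8 \wedge T^5_{48}))$. For these, the most efficient route is to verify linear independence on a single concrete \Sp-structure (for example using the one appearing in the proof of Proposition \ref{PropT18T8}) and then match this against the upper bound on the total $\boldsymbol{7}$-multiplicity provided by the decomposition computation, to conclude that no further generators are needed.
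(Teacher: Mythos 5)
Your overall strategy---Schur's lemma applied to the decomposition of a source representation into $\mathrm{Spin}(7)$-irreducibles, keeping only the isotypic components matching $\langle g_\Phi\rangle\oplus S^2_0\oplus\Omega^2_7$---is indeed the strategy of the paper, which in turn cites Fowdar's computations for the multiplicity counts and the symmetric generators, adding only the $\Lambda^2_7$-valued ones. However, there is a genuine gap in how you propose to obtain the upper bound on the $\Lambda^2_7$-multiplicity at linear second order.

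You take the source for linear second-order invariants to be $\nabla T^1_8\oplus\nabla T^5_{48}\cong(\boldsymbol{8}\otimes\boldsymbol{8})\oplus(\boldsymbol{8}\otimes\boldsymbol{48})$ and propose to read off multiplicities by decomposing this tensor product. But this naive space is \emph{not} the space $V_2(\mathfrak{spin}(7))$ of second-order invariants. Concretely: $\boldsymbol{8}\otimes\boldsymbol{8}$ contains one copy of $\boldsymbol{7}$, and $\boldsymbol{8}\otimes\boldsymbol{48}$ also contains one copy of $\boldsymbol{7}$ (since $\boldsymbol{7}\otimes\boldsymbol{8}\cong\boldsymbol{8}\oplus\boldsymbol{48}$ has $\boldsymbol{48}$ with multiplicity one), so the naive count gives multiplicity two for $\Lambda^2_7$. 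But the result the paper cites from \cite{Fowdar_SU2} is that $V_2(\mathfrak{spin}(7))\cap\operatorname{End}(\mathbb{R}^8)_{\mathfrak{spin}(7)}\cong\mathbb{R}\oplus\Lambda^2_7\oplus 2S^2_0$, i.e.\ exactly \emph{one} copy of $\Lambda^2_7$. The discrepancy is real: there is a Bianchi-type relation among the second derivatives (morally coming from $\mathrm{d}^2\Phi=0$ applied to $\mathrm{d}\Phi=T^1_8\wedge\Phi+T^5_{48}$, whose $\Lambda^2_7$-projection relates $\pi_7(\mathrm{d}T^1_8)$ to a contraction of $\nabla T^5_{48}$ up to quadratic torsion terms), and $V_2$ is computed as a subquotient that accounts for this. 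So your proposed ``upper bound from the decomposition computation'' is too weak for the $\Lambda^2_7$ component, and matching it against a lower bound from a concrete example would not let you conclude there is only one generator. This is precisely why the paper leans on \cite[Proposition~4.1]{Fowdar}/\cite{Fowdar_SU2} for the multiplicity count rather than rederiving it from $\nabla T$. (For the quadratic first-order invariants your approach is fine, since those are genuinely pointwise-algebraic in $T$ and $\mathrm{Sym}^2(\boldsymbol{8}\oplus\boldsymbol{48})$ is the correct source; the naive count there does match Fowdar's $2\mathbb{R}\oplus\Lambda^2_7\oplus 4S^2_0$.)
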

\begin{proof}
    The space of second order \Sp-invariants $V_2(\mathfrak{spin}(7))$ was computed in \cite[Proposition 4.1]{Fowdar}. We are interested in those \Sp-invariants which can contribute to flows of \Sp-structures, which are those that arise as $\mathfrak{spin}(7)$-invariant endomorphisms of $\mathbb{R}^8$, since they correspond to the natural action of $\mathfrak{gl}(8,\mathbb{R}^8)$ on the defining $4$-form $\Phi$.
    To this end, the authors of \cite{Fowdar_SU2} write down that
    \begin{equation}
        V_2(\mathfrak{spin}(7)) \cap\operatorname{End}(\mathbb{R}^8)_{\mathfrak{spin}(7)} \cong \mathbb{R} \oplus \Lambda^2_7 \oplus 2S^2_0(\mathbb{R}^8),
    \end{equation}
    where $\operatorname{End}(\mathbb{R}^8)_{\mathfrak{spin}(7)}$ denotes the space of $\mathfrak{spin}(7)$-invariant endomorphisms of $\mathbb{R}^8$ and $S^2_0(\mathbb{R}^8)$ is the space of traceless symmetric $2$-tensors on $\mathbb{R}^8$. In \cite{FowdarSpin7}, Fowdar writes down explicit generators for the $\mathbb{R}$ and $S^2_0(\mathbb{R}^8)$ on the right hand side. This copy of $\mathbb{R}$ is generated by $\delta T^1_8g_\Phi$, and the two copies of $S^2_0(\mathbb{R}^8)$ are generated by $\mathsf{j}(\delta T^5_{48})$ and $\mathsf{j}(\delta(T^1_8 \wedge \Phi))$. Since the multiplicity of $\Lambda^2_7$ in the above decomposition is $1$, we only need one generator for this component. Any element of $\Lambda^2_7$ obtained from exterior derivatives of torsion forms will suffice, and we choose $\pi_7(\mathrm{d}T^1_8)$.
    The other terms which contribute to second order flows of \Sp-structures are the terms quadratic in the first-order \Sp-invariants. In \cite{FowdarSpin7}, Fowdar decomposes the space of first order \Sp-invariants $V_1$ as 
    \begin{equation}
        V_1(\mathfrak{spin}(7)) = V_{0,0,1} \oplus V_{1,0,1} \cong \langle T^1_8 \rangle \oplus \langle T^5_{48}\rangle , 
    \end{equation}
    and uses this to decompose the space $S^2(V_1(\mathfrak{spin}(7)))$ of invariants quadratic in first order invariants. Again, we are only interested in those that contribute to flows, and we obtain
    \begin{equation}
        S^2(V_1(\mathfrak{spin}(7))) \cap\operatorname{End}(\mathbb{R}^8)_{\mathfrak{spin}(7)} \cong 2\mathbb{R} \oplus \Lambda^2_7 \oplus 4S^2_0(\mathbb{R}^8).
    \end{equation}
    Again, Fowdar explicitly writes down generators for the copies of $\mathbb{R}$ and $S^2_0(\mathbb{R}^8)$. The two copies of $\mathbb{R}$ are generated by $|T^1_8|^2g_\Phi$ and $|T^5_{48}|^2g_\Phi$, and the four copies of $S^2_0(\mathbb{R}^8)$ are generated by $\mathsf{j}((T^1_8 \wedge *T^5_{48}))$, $\mathsf{j}(*(T^1_8 \wedge\Phi)\wedge T^1_8)$, $g_\Phi(\cdot \lrcorner*T^5_{48},\cdot\lrcorner*T^5_{48})$ and $\mathsf{j}((e_i \lrcorner *T^5_{48})\wedge (e_i \lrcorner *T^5_{48}))$. Again, there is only one copy of $\Lambda^2_7$, and we choose $\pi_7(*(T^1_8 \wedge T^5_{48}))$ as a generator.
\end{proof}

Fowdar uses these results to express the Ricci and scalar curvatures in terms of torsion forms. From the above, one knows that $\operatorname{Ric}$ must be a linear combination of the symmetric terms above, and that $\mathrm{Scal}$ must be a linear combination of the scalar terms. Computing on a few examples to determine coefficients results in the following.

\begin{prop}[{\cite[Proposition 4.1]{FowdarSpin7}}]
    The Ricci and scalar curvatures of a manifold with Spin($7$)-structure $(M,\Phi)$ are given by
    \begin{equation}
    \begin{split}
        \operatorname{Ric}(g_\Phi) & = \left(\frac{5}{8}\delta T^1_{8} + \frac{3}{8}| T^1_{8}|^2 - \frac{2}{7}| T^5_{48}|^2\right)g_\Phi \\
        &+\mathsf{j} \left( -3 \delta(T^1_8 \wedge \Phi) +4\delta T^5_{48} - 2(T^1_8 \wedge *_\Phi T^5_{48}) -\frac{9}{4}*_\Phi (T^1_8 \wedge \Phi) \wedge T^1_8\right) \\
        &+ \frac{1}{2}g_\Phi(\cdot \lrcorner *_\Phi T^5_{48},\cdot \lrcorner *_\Phi T^5_{48} ),\\
        \operatorname{Scal}(g_\Phi) &= \frac{7}{2}\delta T^1_8 + \frac{21}{8}| T^1_{8}|^2 -\frac{1}{2}| T^5_{48}|^2.    
    \end{split}         
    \end{equation}
\end{prop}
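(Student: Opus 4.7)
The plan is to combine the representation-theoretic classification just established with explicit computations on well-chosen Spin(7)-structures to pin down the numerical coefficients. By the preceding proposition, any Spin(7)-equivariant symmetric $2$-tensor built from $\Phi$ and at most its second derivatives lies in the $\mathbb{R}$-span of the nine symmetric generators listed (three linear second-order and six quadratic first-order), so in particular $\operatorname{Ric}(g_\Phi)$ must be such a linear combination. Likewise $\operatorname{Scal}(g_\Phi)$, being a scalar invariant of the same order, must be a linear combination of the three scalar generators $\delta T^1_8$, $|T^1_8|^2$ and $|T^5_{48}|^2$. Moreover, taking the $g_\Phi$-trace of the Ricci formula will yield the Scal formula as a consequence, so only the Ricci coefficients need to be determined independently.

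Next, pin down the numerical coefficients by evaluating both sides of the proposed identity on a diverse list of Spin(7)-structures. A locally conformally parallel structure $\tilde{\Phi} = e^{4f}\Phi_0$ with $\Phi_0$ torsion-free has pure $T^1_8$-torsion and isolates the generators built only from $T^1_8$. A balanced structure ($T^1_8 = 0$) isolates the generators involving only $T^5_{48}$ together with $\mathsf{j}(\delta T^5_{48})$. A structure with both torsion forms non-trivial, such as the example of Niedzialomski used in the proof of Proposition \ref{PropT18T8}, then pins down the coefficient of the cross-term $\mathsf{j}(T^1_8 \wedge *T^5_{48})$. On each chosen structure the left-hand side is computed from the explicit formula for $\operatorname{Ric}$ in terms of $T$ (Proposition \ref{PropRicciTensor}), after substituting the expression of $T$ in terms of $T^1_8$ and $T^5_{48}$ established in Section \ref{SectionPrelims}, while the right-hand side is computed directly from the torsion forms. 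Matching the two sides yields a linear system whose solution gives the stated coefficients.

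The main obstacle will be separating the two a priori distinct $S^2_0$-valued quadratic generators $g_\Phi(\,\cdot\,\lrcorner *T^5_{48},\,\cdot\,\lrcorner *T^5_{48})$ and $\mathsf{j}\bigl((e_i \lrcorner *T^5_{48})\wedge(e_i \lrcorner *T^5_{48})\bigr)$, which live in the same isotypic component of multiplicity four and are not distinguished by their $g_\Phi$-trace. At least two independent balanced examples with genuinely different $T^5_{48}$-profiles are therefore required to separate them; the stated Ricci formula implicitly asserts that the coefficient of the second generator vanishes, and this vanishing must be verified as part of the calculation. A final consistency check is to take the $g_\Phi$-trace of the resulting Ricci formula and confirm that it reproduces the stated $\operatorname{Scal}$ expression, using that $\mathsf{j}$ of a form in $\Lambda^4_{35}$ lies in $S^2_0$ and is hence trace-free, so only the $\Lambda^4_1$-components of the $\mathsf{j}$-terms contribute to the trace.
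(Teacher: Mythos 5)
The proposal matches the paper's approach: the statement is cited from Fowdar, and the paper's own description of the method is precisely what you outline — use the representation-theoretic classification to reduce $\operatorname{Ric}$ and $\operatorname{Scal}$ to linear combinations of the symmetric (respectively scalar) generators, then determine the coefficients by computation on a handful of explicit Spin(7)-structures. One small inaccuracy worth flagging: $g_\Phi(\cdot\lrcorner *T^5_{48},\cdot\lrcorner *T^5_{48})$ is not actually trace-free (its trace is $3|T^5_{48}|^2$), so it and $\mathsf{j}((e_i\lrcorner *T^5_{48})\wedge(e_i\lrcorner *T^5_{48}))$ \emph{are} distinguished by trace; your broader point that separating these two requires balanced examples with genuinely different $T^5_{48}$-profiles remains valid, since knowing $\operatorname{Scal}$ alone does not pin down the $S^2_0$-components.
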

Here, we use a very similar approach to express the other terms of the flow equation in terms of the torsion forms. 
\begin{prop}\label{PropReformulation}
    Each additional term of the flow equation \ref{GF} can be expressed in terms of torsion forms as follows.
    \begin{equation}
    \begin{split}
        | T|^2g_\Phi &= \left (\frac{7}{32}| T^1_{8}|^2 +\frac{1}{4}| T^5_{48}|^2\right )g_\Phi,\\
        T \star T &= \frac{7}{16}| T^1_{8}|^2g_\Phi + \mathsf{j} \left( -7 (T^1_8 \wedge *_{\Phi}T^5_{48}) +\frac{7}{8}*_{\Phi} (T^1_8 \wedge \Phi) \wedge T^1_8\right),
        \\
        \mathcal{L}_{T_8}g &= \frac{7}{32} (\delta T^1_8)g_\Phi + \mathsf{j}\left (-\frac{7}{4} \delta(T^1_8 \wedge \Phi) + \frac{7}{4}(T^1_8 \wedge *_{\Phi}T^5_{48}) -\frac{7}{4}*_{\Phi}(T^1_8 \wedge \Phi) \wedge T^1_8\right ), \\
        \operatorname{div}T &= \frac{7}{12}\pi_7(\mathrm{d}T^1_8) + \frac{7}{24}\pi_7(*_{\Phi}(T^1_8 \wedge T^5_{48})).
    \end{split}
    \end{equation}
\end{prop}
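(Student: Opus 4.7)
The plan is to imitate the representation-theoretic strategy of Bryant--Fowdar that was just used to derive the $\operatorname{Ric}$ formula. Each left-hand side in the statement is a \Sp-equivariant tensor of a fixed differential order and polynomial degree in the torsion, hence by the preceding proposition it must lie in the explicitly described finite-dimensional space of such invariants. Pinning down the coefficients then reduces to evaluating both sides on a small number of explicit \Sp-structures.

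I would first identify the target space for each identity. The scalar $|T|^2 g_\Phi$ is a linear combination of $|T^1_8|^2 g_\Phi$ and $|T^5_{48}|^2 g_\Phi$; a direct route is to use the orthogonal splitting $T = T_8 + T_{48}$ together with Proposition \ref{PropT18T8} and a Schur-lemma identification of $T_{48}$ as a constant multiple of $T^5_{48}$ under $\Omega^3_{48}\cong \Omega^5_{48}$. The tensor $T\star T$ is symmetric and quadratic in $T$, so, discarding the skew $\Omega^2_7$-valued generator, it lies in the 6-dimensional span of the quadratic first-order symmetric generators. The tensor $\mathcal{L}_{T_8} g$ is the deformation of $g$ by the vector field dual to $T_8$, hence by Proposition \ref{PropT18T8} purely linear in $\nabla T^1_8$; since the generator $\mathsf{j}(\delta(T^1_8 \wedge \Phi))$ carries hidden quadratic contributions from the Leibniz rule applied to $\delta$ via $\delta\Phi$, one must add explicit quadratic counter-terms, which explains the presence of $T^1_8\wedge *T^5_{48}$ and $*(T^1_8 \wedge \Phi)\wedge T^1_8$. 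Finally, $\operatorname{div} T \in \Omega^2_7$ is a mixed first- and second-order invariant and so must be a linear combination of $\pi_7(\mathrm{d}T^1_8)$ and $\pi_7(*(T^1_8 \wedge T^5_{48}))$.

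I would then determine the scalar coefficients by evaluating both sides on explicit examples, using the pointwise expression $T_{m;ab} = \tfrac{1}{2}\pi_7(e_m \lrcorner T^c)_{ab}$ with $T^c=-\tfrac{1}{6}*(T^1_8 \wedge \Phi) + *T^5_{48}$ established in the previous section. Niedzialomski's explicit \Sp-structures, suitably modified to realise pure-$T^1_8$, pure-$T^5_{48}$, and genuinely mixed torsion profiles, provide enough data to resolve each of the resulting linear systems in the unknown scalar coefficients; each coefficient then falls out of a short algebraic computation.

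The hardest part will be the book-keeping for the $\mathcal{L}_{T_8} g$ identity: because $\mathcal{L}_{T_8}g$ is \emph{a priori} purely second order, but the representation-theoretic basis mixes second-order and quadratic first-order pieces, one must choose examples where all the quadratic generators are actually activated and verify that the spurious second-order contributions hidden inside $\delta(T^1_8 \wedge \Phi)$ are correctly cancelled by the explicit quadratic counter-terms. As a sanity check, substituting the resulting formulae back into \eqref{GF} should be consistent with the $\operatorname{Ric}$ expression given immediately before.
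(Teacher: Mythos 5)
Your proposal matches the paper's method: both invoke the Fowdar–Bryant invariant-theory basis just constructed, narrow the candidate generators using the same shortcuts (no derivatives in $|T|^2$ and $T\star T$; $\mathcal{L}_{T_8}g$ must vanish when $T^1_8=0$; $\operatorname{div}T$ lands in $\Omega^2_7$), and pin down the remaining coefficients by numerical evaluation on a handful of explicit examples. The extra heuristic you offer for $\mathcal{L}_{T_8}g$ — Leibniz-rule mixing inside $\delta(T^1_8\wedge\Phi)$ forcing quadratic counter-terms — is not needed for the argument to close (the coefficient-fitting takes care of it automatically), and the "spurious" pieces are quadratic first-order rather than second-order, but this is a cosmetic point that does not affect the validity of the approach.
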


\begin{proof}
    We proceed exactly as in \cite[Proposition 4.1]{FowdarSpin7}, but in this case there are a few shortcuts. The norm of the torsion tensor $|T|^2$ does not involve any derivatives of torsion, so must be expressible in terms of $|T^1_8|^2$ and $|T^5_{48}|^2$. Likewise, $T\star T$ does not involve any derivatives, so we need only consider the terms algebraic in torsion above. Finally, $\mathcal{L}_{T_8}g$ must vanish if $T_8 = 0$ (equivalently $T^1_8 =0$ by Proposition \ref{PropT18T8}), so we need not consider the terms only involving $T^5_{48}$. To determine coefficients, we compute on a few examples.
\end{proof}

Combining the above, we obtain the following expression for the gradient flow of \Sp-structures in terms of torsion forms.
\begin{thm}\label{ThmTorsionForms}
    Writing the gradient flow of \Sp-structures \eqref{GF} as 
    \[
    \frac{\partial}{\partial t}\Phi_t = A \diamond \Phi,
    \]
    we can express $A$ as
    \begin{equation}
    \begin{split}
        A& = \left( -\frac{3}{16}\delta T^1_{8} - \frac{5}{32}| T^1_{8}|^2 +  \frac{1}{28}| T^5_{48}|^2\right)g_\Phi \\
        &+\mathsf{j} \left(  -\frac{1}{2}\delta(T^1_8 \wedge \Phi) -4\delta T^5_{48}   -\frac{3}{2}(T^1_8 \wedge *_\Phi T^5_{48}) -\frac{3}{8}*_\Phi (T^1_8 \wedge \Phi) \wedge T^1_8\right) \\
        & -\frac{1}{2}g_\Phi(\cdot \lrcorner *_\Phi T^5_{48},\cdot \lrcorner *_\Phi T^5_{48} ) \\
        &+\frac{7}{6}\pi_7(\mathrm{d}T^1_8) +\frac{7}{12}\pi_7(*_{\Phi}(T^1_8 \wedge T^5_{48})).
    \end{split}         
    \end{equation}
\end{thm}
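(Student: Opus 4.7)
The theorem is essentially a combination of two inputs already established in the paper: Fowdar's formula for the Ricci tensor in terms of torsion forms, and Proposition \ref{PropReformulation} which rewrites the remaining four quantities $|T|^2 g_\Phi$, $T\star T$, $\mathcal{L}_{T_8}g$ and $\operatorname{div}T$ appearing in the flow equation \eqref{GF}. The plan is therefore straightforward: substitute these five expressions into the defining right-hand side
\[
A = -\operatorname{Ric} + 2\mathcal{L}_{T_8}g + T\star T - |T|^2 g_\Phi + 2\operatorname{div}T
\]
and collect coefficients of each \Sp-invariant basis tensor.

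Concretely, I would organise the computation according to the structure of the basis of invariants established in the previous proposition. The contributions split naturally into four classes: (i) scalar multiples of $g_\Phi$ (coming from $\delta T^1_8$, $|T^1_8|^2$ and $|T^5_{48}|^2$); (ii) $\mathsf{j}$-image terms built from $\delta(T^1_8 \wedge \Phi)$, $\delta T^5_{48}$, $T^1_8 \wedge *_\Phi T^5_{48}$ and $*_\Phi(T^1_8\wedge \Phi)\wedge T^1_8$; (iii) the quadratic symmetric tensor $g_\Phi(\,\cdot\lrcorner *_\Phi T^5_{48}, \cdot\lrcorner *_\Phi T^5_{48})$; and (iv) the $\pi_7$ terms coming solely from $2\operatorname{div}T$. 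For each class one simply reads off the coefficient from each of Fowdar's formula and Proposition \ref{PropReformulation} and sums them with the correct signs. For example, the $\delta T^1_8$ coefficient of $g_\Phi$ is $-\tfrac{5}{8} + 2\cdot\tfrac{7}{32} = -\tfrac{3}{16}$, and the $T^1_8 \wedge *_\Phi T^5_{48}$ coefficient inside $\mathsf{j}(\cdot)$ is $2 + 2\cdot\tfrac{7}{4} - 7 = -\tfrac{3}{2}$.

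One simplification worth highlighting is that several potential basis elements do not appear because they have zero coefficient in any of the five input tensors: the quadratic term $\mathsf{j}((e_i\lrcorner *_\Phi T^5_{48})\wedge(e_i\lrcorner *_\Phi T^5_{48}))$ never shows up in the Ricci tensor, in $T\star T$, in $|T|^2 g_\Phi$ or in $\mathcal{L}_{T_8}g$, so it is absent from the final expression. Likewise only a single $\pi_7$ term each of $\mathrm{d}T^1_8$ and $*_\Phi(T^1_8 \wedge T^5_{48})$ survives, and these arrive entirely from $2\operatorname{div}T$.

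There is no real conceptual obstacle here, since both Proposition \ref{PropReformulation} and the Ricci formula package all the representation-theoretic work upstream; what remains is careful bookkeeping. The main risk is arithmetic error in combining fractions with different denominators, particularly the $|T^5_{48}|^2$ contribution to the scalar part, which collects as $\tfrac{2}{7} - \tfrac{1}{4} = \tfrac{1}{28}$, and the $*_\Phi(T^1_8\wedge\Phi)\wedge T^1_8$ coefficient $\tfrac{9}{4} - \tfrac{7}{2} + \tfrac{7}{8} = -\tfrac{3}{8}$. Once these are assembled, the displayed expression for $A$ follows immediately.
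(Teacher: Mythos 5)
Your proposal is correct and is precisely the paper's own argument: the theorem follows by substituting Fowdar's formula for $\operatorname{Ric}$ and Proposition~\ref{PropReformulation} into $A = -\operatorname{Ric} + 2\mathcal{L}_{T_8}g + T\star T - |T|^2 g_\Phi + 2\operatorname{div}T$ and collecting coefficients, and your sample arithmetic checks (e.g.\ $-\tfrac{5}{8}+2\cdot\tfrac{7}{32}=-\tfrac{3}{16}$, $\tfrac{2}{7}-\tfrac{1}{4}=\tfrac{1}{28}$, $2+2\cdot\tfrac{7}{4}-7=-\tfrac{3}{2}$, $\tfrac{9}{4}-\tfrac{7}{2}+\tfrac{7}{8}=-\tfrac{3}{8}$) are all right.
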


    As with the earlier formula expressing the Ricci tensor in terms of the torsion tensor and its covariant derivatives, this formula does not \emph{look} particularly useful for explicit computations. However, we see that in the balanced ($T^1_8 = 0$) case, these formulae simplify quite significantly and, in particular, the terms $T \star T, \mathcal{L}_{T_8}g$ and $\operatorname{div}T$ vanish. In general, the balanced case need not be preserved along the flow\footnote{This can be seen by considering the evolution equation of the torsion tensor $T$ \cite[Theorem 3.4]{KarigiannisFlows}, and using that to obtain the evolution equation for $T_8$.}, but we will see an example in the following section that starts off balanced and remains balanced under the flow (Example \ref{ExampleOtherSolution}). So, the above formula provide a great reduction of the necessary calculations. Even in the non-balanced case, the above formulae are often easier to deal with than explicitly computing the full torsion tensor and its various covariant derivatives and traces.
\section{Explicit solutions on homogeneous manifolds}\label{SectionExamples}
In this section, we construct the first explicit solutions to the flows discussed so far. We obtain non-trivial solutions, including shrinking solitons, to the gradient flow and the coupled Ricci-harmonic flow, as well as a stationary solution to the harmonic flow. All of these examples are in the homogeneous setting, so we begin by reviewing the theory of homogeneous spaces and homogeneous \Sp-structures. 

\begin{defn}
    Let $G$ be a Lie group acting on a manifold $M$. The action of $G$ on $M$ is called \emph{almost effective} if the kernel of the action is finite. That is, the set
    \[
    \{g \in G \mid g(x) = x ~\forall x \in M\}
    \] is finite.
\end{defn}

\begin{defn}
    A Riemannian manifold $(M,g)$ is called a \emph{homogeneous space} if there exists a Lie group $G$ which acts transitively and {almost effectively} by isometries on $M$. If this is the case, $M$ can be expressed as $M=G/H$, where $H$ is defined to be the stabiliser of a fixed point $o \in M$ under the action of $G$ on $M$. The presentation of a homogeneous space as $G/H$ where $G$ and $H$ are minimal is called the \emph{canonical presentation}.
\end{defn}
Intuitively, this says that the manifold \q{looks the same} at every point, since for any $x,y \in M$, there exists $g\in G$ such that $g(x) = y$, and this map is an isometry. We will be interested in \Sp-structures on homogeneous spaces which are compatible with the homogeneous structure, so we make the following definition, from \cite{Homo8}.
\begin{defn}
    A \Sp-structure $\Phi$ on a homogeneous manifold $M = G/H$ is called \emph{homogeneous} or \emph{invariant} if the action of $G$ on $M$ preserves $\Phi$.
\end{defn}

A classification of the canonical presentations $G/H$ of all compact, simply-connected homogeneous $8$-manifolds admitting invariant \Sp-structures was given in \cite{Homo8}, and we recall this result here.

\begin{thm}[{\cite[Theorem B]{Homo8}}]\label{TheoremList}
    The canonical presentations of all compact, simply connected, almost effective homogeneous spaces admitting invariant Spin($7$)-structures are exhausted by:
    \begin{itemize}
        \item $\displaystyle \frac{\mathrm{SU}(3)}{\{e\}},$
        \item The infinite family 
        \[
        C_{k,l,m} \coloneqq \frac{\mathrm{SU}(2) \times \mathrm{SU}(2) \times \mathrm{SU}(2)}{\mathrm{U}(1)_{k,l,m}}, \quad \text{with } k = l + m,
        \]
        where $\mathrm{U}(1)_{k,l,m} \coloneq \{(z^k,z^l,z^m) \mid z\in \mathrm{U}(1)\},$
        \item The Calabi--Eckmann manifold $\displaystyle \frac{\mathrm{SU}(3)}{\mathrm{SU}(2)} \times \mathrm{SU}(2).$
    \end{itemize}
\end{thm}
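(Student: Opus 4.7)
The plan is to reformulate the existence of an invariant $\mathrm{Spin}(7)$-structure on $M = G/H$ as a representation-theoretic condition on the isotropy action, and then carry out an exhaustive case analysis. By Definition \ref{DefSPin7Structure}, such a structure corresponds to an $\mathrm{Ad}(H)$-invariant admissible $4$-form $\Phi_0 \in \Lambda^4 \mathfrak{m}^*$, where $\mathfrak{m} \cong \mathfrak{g}/\mathfrak{h} \cong T_o M$. Equivalently, the image of the isotropy representation $\mathrm{Ad}\colon H \to \mathrm{GL}(\mathfrak{m})$ lies (up to conjugation) in the stabiliser of $\Phi_0$, namely $\mathrm{Spin}(7) \subset \mathrm{SO}(8)$. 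Combined with almost-effectiveness, which forces the kernel of the isotropy to be finite, this essentially identifies the identity component of $H$ with a connected closed subgroup of $\mathrm{Spin}(7)$.

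First I would enumerate the connected closed subgroups $K \subset \mathrm{Spin}(7)$ up to conjugation, and for each one compute the branching of the standard $8$-dimensional spin representation of $\mathrm{Spin}(7)$ under $K$. This branching prescribes exactly the isotropy representation which $H$ must induce on $\mathfrak{m}$. Next, for each candidate $K$, I would list the compact connected Lie groups $G$ admitting a closed subgroup $H$ with identity component conjugate to $K$ and $\dim G - \dim H = 8$, such that the induced $H$-action on $\mathfrak{g}/\mathfrak{h}$ matches the prescribed branching. Classical classifications of transitive actions of compact Lie groups on low-dimensional manifolds (e.g.\ Onishchik's tables) shorten the list dramatically, and dimension plus rank bounds eliminate many more candidates.

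For each surviving pair $(G,H)$, I would then verify in turn that $G/H$ is simply connected (passing to covers where needed), that the action is almost effective so that $(G,H)$ is genuinely the canonical presentation, and finally that $\Lambda^4 \mathfrak{m}^*$ contains an $\mathrm{Ad}(H)$-invariant admissible $4$-form. Since admissibility is an open condition in $\Lambda^4 (\mathbb{R}^8)^*$, in practice one writes down an explicit basis for the invariant subspace and checks whether it intersects the $\mathrm{GL}(8,\mathbb{R})$-orbit of $\Phi_0$. The main obstacle will be the extensive casework together with the admissibility check for marginal candidates whose invariant $4$-forms lie near the boundary of the admissible orbit. In particular, with reducible isotropy representations such as those arising in the infinite family $C_{k,l,m}$, the invariant subspace of $\Lambda^4 \mathfrak{m}^*$ is multi-dimensional and one must analyse how admissibility interacts with the parameters $k,l,m$; and one must take care not to double-count presentations that differ only by an outer automorphism or a change of the embedding $\mathrm{U}(1) \hookrightarrow \mathrm{SU}(2)^3$.
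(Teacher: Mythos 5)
This theorem is cited in the paper from the external reference [Homo8] (Theorem~B) without proof, so there is no argument in the present paper to compare against directly. Your outlined strategy is nonetheless the standard and correct one for this kind of classification: reduce to the representation-theoretic condition that the isotropy representation of $H$ on $\mathfrak{m} \cong \mathfrak{g}/\mathfrak{h}$ lands (up to conjugation) inside $\mathrm{Spin}(7) \subset \mathrm{SO}(8)$, and then carry out a finite case analysis. The reformulation is sound, and your observation about almost-effectiveness is correct (for compact $G$ acting on a connected $M$, the ineffectivity kernel coincides with the kernel of the isotropy representation at a point, since a compact-group element fixing a point with trivial differential there acts trivially on a neighbourhood, hence everywhere).

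One likely difference from the cited reference, and a genuine difficulty with your chosen direction of attack: the literature on homogeneous $G$-structures typically \emph{starts} from the known classification of compact simply connected homogeneous $8$-manifolds (Klaus, Onishchik, Kamerich) and then checks, for each admissible pair $(G,H)$, whether the isotropy image can be conjugated into $\mathrm{Spin}(7)$ and whether an invariant admissible $4$-form exists. You instead begin from the subgroups $K \subset \mathrm{Spin}(7)$ and their branchings and try to build $G$ around them. Both routes reach the same list, but the first gets exhaustiveness for free from the pre-existing classification, whereas yours must independently argue that no other compact $G$ admits an almost effective transitive action realising one of the candidate branchings — this is exactly where Onishchik's tables become load-bearing, as you note, and your proposal should make that dependence explicit. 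Two further points to tighten: ``canonical presentation'' requires minimality of $(G,H)$ among all almost effective transitive presentations of the same $M$, not merely almost-effectiveness, so the verification step needs a minimality argument; and for the infinite family $C_{k,l,m}$ the constraint $k=l+m$ must emerge from an explicit admissibility analysis on the $\mathrm{U}(1)_{k,l,m}$-invariant subspace of $\Lambda^4\mathfrak{m}^*$, which you correctly flag as the delicate computational step but do not carry out.
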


We note that, by short-time uniqueness of solutions to the gradient flow \eqref{GF}, a homogeneous \Sp-structure remains homogeneous as it evolves under the flow.

With this in mind, we begin to investigate some explicit examples of \Sp-structures on homogeneous spaces, with the hope of describing how they evolve under the gradient flow \ref{GF}. The most natural choice of underlying space is $\mathrm{SU}(3)$. The torsion forms are not of a particularly simple form in this case, so our formulae from Section \ref{SectionReformulation} do not simplify things, so we instead compute the full torsion tensor $T$ using the method described in Section \ref{SectionPrelims}.

\begin{example}\label{ExampleSU3Flow} \footnote{We note that this example of a shrinking soliton \Sp-structure on $\operatorname{SU}(3)$ has also been independently discovered by Dwivedi-Singhal (Private communication, October 2025).}
    In \cite[Section 7]{Fernandez}, a \Sp-structure inducing the bi-invariant metric on $\operatorname{SU}(3)$ is written down, with respect to a particular choice of basis. With their choice of basis, this is written as
    \begin{equation}
    \begin{split}
         \Phi = &e^{1235}+e^{1248} + e^{1267}+e^{1346} +e^{1378}+e^{1457} + e^{1568}\\
        & +e^{2347} -e^{2368}-e^{2456} + e^{2578}+e^{3458}-e^{3567}-e^{4678},
    \end{split}
    \end{equation}
    inducing the metric $g = \operatorname{diag}(1, \cdots ,1).$
Note that this choice of \Sp-form differs from the one we used to define \Sp, but only by a change of basis. Explicitly, this change of basis is given by relabelling the element Fernandez calls $1$ as $e_1$, adding $2$ to each index in $e_0,\cdots ,e_6$ and then applying the change of basis given by
\[
\begin{cases}
    e_4\mapsto e_5,\\
    e_5 \mapsto e_4,\\
    e_6 \mapsto e_7,\\
    e_7 \mapsto -e_8,\\
    e_8 \mapsto e_6,\\
    e_i\mapsto e_i \text{ for all } i=1,2,3.    
\end{cases}
\]
For the sake of consistency with \cite{Fernandez}, we will work in the basis given in this example.    
    We then compute the torsion tensor $T$ using the method outlined in Section \ref{SectionPrelims}, and the aid of Maple for the symbolic algebraic manipulation. The intermediate calculations are unpleasant and not particularly enlightening, but we include all details in Appendix \ref{Appendix}, including an explicit expression of the torsion tensor.
    From this, we obtain the following components of the flow equation.
\begin{equation}
\begin{aligned}
    \operatorname{Ric} &= \operatorname{diag}\!\left(\tfrac{1}{4}\right), 
    &\quad T \star T &= \operatorname{diag}\!\left(\tfrac{3}{2}\right), \\[6pt]
    | T |^2 &= 2, 
    &\quad \mathcal{L}_{T_8} g &= \operatorname{div} T = 0.
\end{aligned}
\end{equation}
    We note that here, and for all explicit computations of flows, we represent the $2$-tensors involved as $8$-by-$8$ matrices in the usual way, using the basis $\{e^1,\cdots, e^8\}$.
    With this, writing $$A= (-\operatorname{Ric} + 2(\mathcal{L}_{T_8}g) + T\star T - |T|^2g + 2 \text{ div}T),$$  we compute $A = \operatorname{diag}\left(-\frac{3}{4}\right) = -\frac{3}{4}g$, which implies that $A \diamond \Phi = -3\Phi$ and so $\Phi$ satisfies the soliton equation (\ref{EquationSoliton}) and is a shrinker. We can explicitly solve the flow, by taking the ansatz $\Phi_t = f(t)^4\Phi$ with $f\in C^\infty(\mathbb{R})$ such that $f(0) = 1$. Note that this rescaled \Sp-structure induces the rescaled metric $g_t = \operatorname{diag}(f(t)^2, \cdots, f(t)^2)$.
    Substituting this into the flow equation and using the scaling of the right hand side by \eqref{EquationRescaledFlow} gives the following ODE for $f$:
    \begin{equation}
        4f(t)^3\frac{df(t)}{dt} = -3f(t)^2,
    \end{equation}
    which we solve to find $f(t) = \frac{1}{\sqrt2}\sqrt{2-3t}$ which finally gives 
    \begin{equation}
        \Phi_t = \frac{1}{4}(2-3t)^2\Phi_0
    \end{equation}
    as the solution to the flow equation \ref{GF} starting from the initial \Sp-structure $\Phi$ on $\operatorname{SU}(3)$.
\end{example}

    The calculations above also give rise to solutions for the other two flows of \Sp-structures mentioned in Section \ref{SectionFlows}. Since $\operatorname{div}T = 0$ and $\operatorname{Ric} = \operatorname{diag}\left(\frac{1}{4}\right)$, the \Sp-structure $\Phi$ defined on $\operatorname{SU}(3)$ above is a critical point for the harmonic flow and a shrinking soliton for the coupled Ricci-harmonic flow. To the best of our knowledge, these are the first explicit (non-stationary) solutions to any flow of \Sp-structures. The example above proves, in particular, that compact shrinking solitons exist for the gradient flow of \Sp-structures, in contrast to compact expanders. We collect all of this into the following theorem.

\begin{thm}\label{ThmSolitonsExist}
    There exist compact shrinking solitons to the gradient flow of \Sp-structures given by \eqref{GF}. In particular, 
    \begin{equation}
        \Phi_t = \frac{1}{4}(2-3t)^2\Phi_0
    \end{equation}
    is a shrinking soliton solution to the gradient flow \eqref{GF} starting from the initial \Sp-structure on $\operatorname{SU}(3)$, described in Example \ref{ExampleSU3Flow}. Moreover, the one-parameter family of \Sp-structures given by 
    \[
     \Phi_t = \frac{1}{4}(2-t)^2\Phi_0
    \]
    is a shrinking soliton for the Ricci-harmonic flow \eqref{EquationRicciHarmonic}, with the same initial conditions.
\end{thm}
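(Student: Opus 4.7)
The plan is to verify that the explicit bi-invariant \Sp-structure $\Phi_0$ on $\operatorname{SU}(3)$ from Example \ref{ExampleSU3Flow} satisfies the soliton equation \eqref{EquationSoliton} with $\lambda = -3$ and $Y = 0$, and then to integrate a one-variable ODE obtained from a conformal scaling ansatz $\Phi_t = f(t)^4\Phi_0$. First I would fix Fernandez's expression for $\Phi_0$ and observe that, since $\Phi_0$ and the induced metric $g_{\Phi_0} = \operatorname{diag}(1,\ldots,1)$ are both bi-invariant, every naturally constructed tensor in \eqref{GF} is left-invariant and is therefore determined by its value at the identity. The verification of the soliton condition thus reduces to a single pointwise calculation.

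The technical heart of the argument is the evaluation at the identity of the torsion tensor $T$, which I would carry out via Proposition \ref{PropTorsion} (or equivalently using the torsion-form expression $T_{m;ab} = \tfrac{1}{2}\pi_7(e_m \lrcorner T^c)$ with $T^c = -\tfrac{1}{6}*(T^1_8 \wedge \Phi) + *T^5_{48}$). From $T$ I assemble the five building blocks of \eqref{GF}, confirming the outputs
\begin{equation*}
\operatorname{Ric} = \tfrac{1}{4}g, \qquad T \star T = \tfrac{3}{2}\,g, \qquad |T|^2 = 2, \qquad \mathcal{L}_{T_8}g = 0, \qquad \operatorname{div}T = 0.
\end{equation*}
Adding these with the signs prescribed by \eqref{GF} yields
\begin{equation*}
A := -\operatorname{Ric} + 2\mathcal{L}_{T_8}g + T\star T - |T|^2 g + 2\operatorname{div}T = -\tfrac{3}{4}\,g,
\end{equation*}
and since $g \diamond \Phi = 4\Phi$ by direct inspection of the diamond map, this gives $A \diamond \Phi_0 = -3\Phi_0$, which is precisely the shrinking soliton identity with $\lambda = -3$ and $Y = 0$.

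With the soliton identity in hand, I would insert the ansatz $\Phi_t = f(t)^4 \Phi_0$ with $f(0) = 1$ into \eqref{GF}. The left-hand side is $4 f(t)^3 f'(t)\,\Phi_0$, while the rescaling identity \eqref{EquationRescaledFlow} applied with $c = f(t)$ gives $A_t \diamond_t \Phi_t = f(t)^2 (A \diamond \Phi_0) = -3f(t)^2\,\Phi_0$. Matching scalars yields the separable ODE $4ff' = -3$, which integrates to $f(t)^2 = (2-3t)/2$ and hence $\Phi_t = \tfrac{1}{4}(2-3t)^2 \Phi_0$. For the coupled Ricci-harmonic flow \eqref{EquationRicciHarmonic}, the relevant right-hand side coefficient is $-\operatorname{Ric} + \operatorname{div}T = -\tfrac{1}{4}g$, so the analogous ODE is $4ff' = -1$ with solution $f(t)^2 = (2-t)/2$, producing $\Phi_t = \tfrac{1}{4}(2-t)^2\Phi_0$.

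The only genuine obstacle is step two: computing the full torsion tensor of an explicit bi-invariant \Sp-structure is algebraically heavy and must be done carefully component by component, but it is a finite pointwise task carried out with symbolic software and deferred to the appendix. Once the diagonal form $A = -\tfrac{3}{4}g$ is established, the remainder of the proof --- the passage to $\lambda = -3$ and the integration of the single-variable ODE for $f$ --- is entirely routine, and the corresponding statement for the Ricci-harmonic flow follows from the same computation with the trivial modification indicated above.
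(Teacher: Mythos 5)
Your proposal is correct and follows essentially the same route as the paper: verify $A = -\tfrac{3}{4}g$ on the bi-invariant $\Phi_0$ via the explicit torsion computation (deferred to the appendix), conclude $A\diamond\Phi_0 = -3\Phi_0$, and integrate the rescaling ODE from the ansatz $\Phi_t = f(t)^4\Phi_0$, with the Ricci-harmonic case handled identically using $-\operatorname{Ric} + \operatorname{div}T = -\tfrac{1}{4}g$. The only cosmetic difference is that you cancel $f^2$ from both sides of the ODE before integrating, where the paper writes $4f^3 f' = -3f^2$; the result is the same.
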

We now proceed with a non-simply-connected example, which is therefore not given in the classification of Theorem \ref{TheoremList}, but has appeared in the literature (\cite{Niedzialomski},\cite{Cabrera}). In this case, the torsion forms take a very simple form, so the formulae from Section \ref{SectionReformulation} greatly simplifiy the calculations.

\begin{example}\label{ExampleOtherSolution}
    We begin by describing the construction of the underlying manifold. This manifold and its \Sp-structure were originally discussed in \cite{Cabrera}, and considered more recently in \cite[Example 4.7]{Niedzialomski}.
  
Let \( k \in \mathbb{R} \setminus \{0\} \). Define the Lie group \( G(k) \) as the set of matrices of the form:

\[
G(k) = \left\{ 
\begin{pmatrix}
e^{kz} & 0 & 0 & x \\
0 & e^{-kz} & 0 & y \\
0 & 0 & 1 & z \\
0 & 0 & 0 & 1 \\
\end{pmatrix}
: x, y, z \in \mathbb{R}
\right\}.
\]
Note that $G(k)$ is isomorphic to the $3$-dimensional Lie group $\mathbf{Sol}$ \cite{Sol} and is non-compact, and $\{x,y,z\}$ is a coordinate system on $G(k)$.
We have a basis of right-invariant $1$-forms on $G(k)$ given by
$$\{\eta_1 = \mathrm{d}x-kx\mathrm{d}z, \eta_2 = \mathrm{d}y+ky\mathrm{d}z, \eta_3 = \mathrm{d}z\}.$$ There exists a discrete subgroup $\Gamma(k)$ such that $G(k)/\Gamma(k)$ is a compact manifold \cite{Sol}. 
Since the basis of $1$-forms is right-invariant, this basis descends to a basis on the compact quotient manifold $$H(k) = G(k)/\Gamma(k).$$
We note that $H(k)$ is a $T^2$-bundle over $S^1$ (since $\mathbf{Sol} \cong \mathbb{R}^2 \rtimes \mathbb{R}$), with the $z$-coordinate spanning the $S^1$-factor and the $\{x,y\}$-coordinates spanning the torus. 
Finally, define 
\[M = H(k)\times T^5.\]
Let $\{e^1,e^2,e^3,e^4,e^5\}$ be a basis of closed $1$-forms on $T^5$. Then,
\[\{e^1,e^2,e^3,e^4,e^5,e^6 = \eta_1, e^7 = \eta_2, e^8 = \eta_3\}\]is a basis of $1$-forms on $M$. Define a metric $g$ on $M$ such that the above basis of $1$-forms is orthonormal. From the definitions of $\eta_1$ and $\eta_2$, and the fact that $e^1,\cdots,e^5$ are closed, we see that
\begin{equation}
    \begin{split}
        \mathrm{d}e^i &= 0, \text{for all }i=1,2,3,4,5,8,\\
        \mathrm{d}e^6 &= \mathrm{d}(\mathrm{d}x-kx\mathrm{d}z) = -k\mathrm{d}x \wedge \mathrm{d}z = -ke^{67},\\
        \mathrm{d}e^7 &= \mathrm{d}(\mathrm{d}y+ky\mathrm{d}z) = k\mathrm{d}y\wedge \mathrm{d}z = ke^{78}.
    \end{split}
\end{equation}
Note that in this case there are only two non-closed $1$-forms. This makes the computation of the gradient flow equation much easier than it was in Example \ref{ExampleSU3Flow}, so we will not have to relegate it to an appendix.
From now on, we will take $k$ to be $1$, and at the end of the construction we will remark on what changes for any other choice of non-zero $k$.

On this manifold, we can write down the following \Sp-structure:
\begin{equation}
    \begin{split}
        \Phi = &e^{1234}-e^{1256} - e^{1278}-e^{1357} +e^{1368}-e^{1458} - e^{1467}\\
        & +e^{5678} -e^{3478}-e^{3456} - e^{2468}+e^{2457}-e^{2367}-e^{2358},
    \end{split}
\end{equation}
which induces the Riemannian metric for which the basis defined above is orthonormal.
Note again that this form differs from the one we used to define \Sp. Again, this is only by a change of basis, this time given by $e_5 \mapsto -e_5$ and $e_8 \mapsto -e_8$.
Using the exterior derivatives of the basis $1$-forms, we compute
\begin{equation}
\begin{split}
    \mathrm{d}\Phi &= - e^{12568}+e^{13578}-e^{34568}-e^{24578},\\
    *\mathrm{d}\Phi &= e^{347}-e^{246}+e^{127}-e^{136},
\end{split} 
\end{equation}
at which point we see $*\mathrm{d}\Phi \wedge \Phi = 0$ and so $T^1_8=0$ and $T^5_{48} = \mathrm{d}\Phi$. This simplifies the formula for the evolution equation obtained in Section \ref{SectionReformulation}, and we compute the following:
\begin{equation}
    \begin{split}
        \operatorname{Ric} &= \operatorname{diag}(0,0,0,0,0,0,0,-2),\\
        T\star T&= \mathcal{L}_{T_8}g = \operatorname{div}T=0,\\
       | T|^2&=1.
    \end{split}
\end{equation}
Letting $A= (-\operatorname{Ric} + 2(\mathcal{L}_{T_8}g) + T\star T - |T|^2g + 2 \text{ div}T)$, we compute that
\begin{equation}
    \begin{split}
        A \diamond\Phi &= -4(e^{1234}-e^{1256}-e^{1357}- e^{1467}-e^{3456}+e^{2457}-e^{2367})\\ & -2(- e^{1278}+e^{1368}-e^{1458}+e^{5678} -e^{3478}- e^{2468}-e^{2358}).
    \end{split}
\end{equation}
Observing this initial behaviour under the flow, we see that all of the terms involving $e^8$ shrink with initial speed $2$ and all other terms shrink with initial speed $4$, so a natural choice of ansatz is to write $e_8 \mapsto h(t)e_8$ and $e_i \mapsto f(t)e_i$ for all $i\neq 8$, for some $f,h\in C^\infty(\mathbb{R})$ with $f(0)=h(0) = 1$. This gives
\begin{equation}
\begin{split}
    \Phi_t = &f(t)^4(e^{1234}-e^{1256}-e^{1357}- e^{1467}-e^{3456}+e^{2457}-e^{2367})\\
    &+f(t)^3h(t)(- e^{1278}+e^{1368}-e^{1458}+e^{5678} -e^{3478}- e^{2468}-e^{2358}),
\end{split} 
\end{equation}
which induces the metric $g_t = \operatorname{diag}(f(t)^2,\cdots, f(t)^2,h(t))$.
Substituting this ansatz into the gradient flow equation \ref{GF}, we obtain
\begin{equation}
\begin{split}
    \frac{\partial\Phi_t}{\partial t} = &-\frac{4f(t)^4}{h(t)^2}(e^{1234}-e^{1256}-e^{1357}- e^{1467}-e^{3456}+e^{2457}-e^{2367})\\
    &-\frac{2f(t)^3}{h(t)}(- e^{1278}+e^{1368}-e^{1458}+e^{5678} -e^{3478}- e^{2468}-e^{2358}).
\end{split} 
\end{equation}
Equating basis $4$-forms yields the following system of coupled ODEs.
\[
\begin{cases}
\displaystyle 4f(t)^3 \frac{df(t)}{dt} = -\frac{4f(t)^4}{h(t)^2}, \\
\displaystyle f(t)^3 \frac{dh(t)}{dt} + 3f(t)^2 \frac{df(t)}{dt} h(t) = -\frac{2f(t)^3}{h(t)}, \\
f(0) = 1, \quad h(0) = 1,
\end{cases}
\]
which can easily be decoupled and solved to see that
\[
\begin{cases}
f(t)= \displaystyle \frac{1}{(2t+1)^\frac{1}{2}},\\
h(t)=(2t+1)^\frac{1}{2},
\end{cases}
\]
so the solution to the gradient flow with initial \Sp-structure given by $\Phi$ is 
\begin{equation}
\begin{split}
    \Phi_t = &\frac{1}{(2t+1)^2}(e^{1234}-e^{1256}-e^{1357}- e^{1467}-e^{3456}+e^{2457}-e^{2367})\\
    &+\frac{1}{(2t+1)}(- e^{1278}+e^{1368}-e^{1458}+e^{5678} -e^{3478}- e^{2468}-e^{2358}).
\end{split} 
\end{equation}
Now that we have an explicit solution to the gradient flow, we can explicitly describe the evolution of the underlying metric. The metric induced by $\Phi_t$ is
\begin{equation}
    g_t = \operatorname{diag}\left(\frac{1}{2t+1},\cdots,\frac{1}{2t+1},2t+1 \right),
\end{equation}
so the underlying manifold $M$ evolves in such a way that the $7$-dimensional fibres over $S^1$ shrink and the base $S^1$ expands linearly in $t$. This behaviour is consistent with other geometric flows on bundles over circles.

Finally, for $k$ not equal to $1$, all that changes is $\mathrm{d}\Phi$, and therefore $T$, picks up an overall factor of $k$, which implies that $A$ is scaled by a factor of $k^2$ and so the ODEs for $f$ and $h$ each have a factor of $k^2$ on the right hand side, leading to the solution
\begin{equation}
\begin{split}
    \Phi_t = &\frac{k^2}{(2t+1)^2}(e^{1234}-e^{1256}-e^{1357}- e^{1467}-e^{3456}+e^{2457}-e^{2367})\\
    &+\frac{k^2}{(2t+1)}(- e^{1278}+e^{1368}-e^{1458}+e^{5678} -e^{3478}- e^{2468}-e^{2358}).
\end{split} 
\end{equation}
\end{example}

For the spaces $C_{k,l,m}$ and the Calabi-Eckmann manifold mentioned in Theorem \ref{TheoremList}, the authors of \cite{Homo8} construct $5$-parameter families of homogeneous \Sp-structures. As work in progress, we are currently studying the gradient flow on these families, with the aim of understanding the dynamical system of the evolution of a general \Sp-structure. In particular, it would be interesting to see if a general \Sp-structure on these spaces always converges to a particular one, modulo rescaling. This would suggest that the limiting \Sp-structure was somehow canonical.

\subsection{Linear instability of solitons}\label{SectionStability}

Now that we know shrinking solitons exist for the gradient flow of \Sp-structures, it is natural to consider the question of if and when they are \emph{stable} critical points for the (renormalised) flow.  From the spinorial point of view, Schiemanowski proved that all solitons are stable critical points for the renormalised spinor flow, so long as the associated metric admits no Killing fields \cite[Theorem 2]{schiemanowski2017stabilityspinorflow}. In all of the examples considered so far, there \emph{are} Killing fields so there is more work to be done. Indeed, we shall see that the soliton constructed on $\operatorname{SU}(3)$ (Example \ref{ExampleSU3Flow}) is in fact an unstable critical point.

For a soliton $\Phi_t$ evolving under the gradient flow \ref{GF} as $\frac{\partial}{\partial t}\Phi_t = \lambda \Phi_t$, we consider the renormalised flow 
\begin{equation}
    \frac{\partial}{\partial t} \Phi(t) = (-\operatorname{Ric} + 2(\mathcal{L}_{T_8}g) + T\star T - |T|^2g + 2 \text{ div}T)_t \diamond_t \Phi(t)- \lambda \Phi(t),
\end{equation}
so that the soliton $\Phi_t$ is a critical point for the flow. We will call this flow the \emph{renormalised gradient flow} of \Sp-structures, for a soliton with scaling factor $\lambda$. Since the soliton $\Phi(t)$ is a critical point for the renormalised flow, we can examine the stability of the flow around this point. Since we have an explicit example of a shrinking soliton on $\operatorname{SU}(3)$ (Example \ref{ExampleSU3Flow}) we can examine its stability, and we obtain the following result.
\begin{thm}\label{ExampleSU3Unstable}
        The \Sp-soliton constructed in Example \ref{ExampleSU3Flow} is an unstable critical point for the renormalised gradient flow \eqref{EquationRescaledFlow}, modulo rescaling. Moreover, there exist stable directions, unstable directions and zero modes.
    \end{thm}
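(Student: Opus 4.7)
The plan is to analyse the spectrum of the linearisation $L$ of the renormalised flow operator at $\Phi_0$, exploiting SU(3)-equivariance to reduce the problem to a finite-dimensional computation and to diagonalise $L$ by representation theory.

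First, since the energy functional and the renormalising term are both diffeomorphism- and Spin(7)-natural, and since the bi-invariant $\Phi_0$ is itself preserved by SU(3), the linearised operator $L$ commutes with the SU(3)-action on perturbations. I therefore restrict to left-invariant deformations, identified via Corollary \ref{CorollaryIsomorphism} with left-invariant sections of $S^2 \oplus \Omega^2_7$. This is a finite-dimensional space on which $L$ can be fully analysed. Decomposing under the adjoint action of SU(3) on $\mathfrak{su}(3) \cong \mathbb{R}^8$ (which is irreducible), the space of left-invariant symmetric $2$-tensors splits as
\[
S^2(\mathfrak{su}(3)^*) = \langle g_{\Phi_0} \rangle \oplus \mathrm{ad}(\mathfrak{su}(3)) \oplus W,
\]
where $\langle g_{\Phi_0} \rangle$ is the rescaling direction quotiented out in the renormalised setup, and $W$ collects the remaining isotypic pieces; a similar decomposition applies to the left-invariant sections of $\Omega^2_7$. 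By Schur's lemma, $L$ acts as a scalar on each irreducible isotypic summand of this decomposition.

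Second, I evaluate $L$ on a representative of each isotypic summand. Using the reformulation in Theorem \ref{ThmTorsionForms} together with the explicit data for $\Phi_0$ and its torsion from Example \ref{ExampleSU3Flow}, each such calculation reduces to a symbolic tensor manipulation of essentially the same complexity as the soliton verification in Example \ref{ExampleSU3Flow}. A genuine zero mode (distinct from rescaling) is then exhibited by identifying a summand on which $L$ evaluates to zero; a natural candidate is a direction preserving both the induced metric up to homothety and the norm $|T|^2$, which can be realised within the $\Omega^2_7$ factor. Unstable and stable directions are produced by exhibiting one positive and one negative eigenvalue among the remaining summands, which is what one expects given that a shrinker can neither be a local minimiser nor a local maximiser of $E$ modulo scaling.

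The main obstacle is the direct computation of $L$: even with Theorem \ref{ThmTorsionForms} at hand, linearising the operator $\Phi \mapsto A(\Phi) \diamond \Phi + 3\Phi$ requires differentiating every term in the expression for $A$, including variations of $g_\Phi$, $*_\Phi$, $T^1_8$, $T^5_{48}$, $\mathsf{j}$ and $\pi_7$, together with the variation of the diamond action itself. While the resulting bookkeeping is heavy, SU(3)-equivariance drastically cuts down the work: one test deformation per isotypic summand suffices to pin down the corresponding scalar eigenvalue, making the problem tractable via the same Maple-assisted framework used in Example \ref{ExampleSU3Flow}, and the theorem then follows by reading off the signs of the handful of computed scalars.
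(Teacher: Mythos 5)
Your proposal is organised around the same fundamental mechanism as the paper's proof — linearise the renormalised flow at $\Phi_0$ and probe the sign of the second variation — but you set it up more systematically via SU(3)-equivariance and Schur's lemma, whereas the paper proceeds by direct computation on hand-picked deformations and only introduces the representation-theoretic splitting $S^2_0(\mathbb{R}^8)\cong\boldsymbol{8}\oplus\boldsymbol{27}$ under SU(3) in a closing remark to explain a posteriori why $\Omega^4_{35}$ deformations split into stable and unstable directions. Your framing is cleaner in that respect: Schur's lemma does guarantee $L$ is a scalar on each isotypic summand of the finite-dimensional left-invariant deformation space, so one test vector per summand would indeed suffice, and your guess that zero modes live in the $\Omega^2_7$ (equivalently $\Omega^4_7$) factor matches what the paper finds.

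However, the proposal as written does not actually prove the theorem — it outlines a strategy whose conclusion is asserted rather than established. The crucial step is to pin down the signs of the eigenvalues (or the sign of $\langle L\Psi_0,\Psi_0\rangle$ for specific $\Psi_0$), and you defer this entirely to unperformed computation, closing with the heuristic that \emph{``a shrinker can neither be a local minimiser nor a local maximiser of $E$ modulo scaling.''} That heuristic is not sound: the paper itself cites the result that a compact Ricci shrinking soliton \emph{is} stable if and only if it is a local maximum of Perelman's shrinker entropy, so shrinkers can certainly be local extremisers modulo scaling for the relevant functional. There is no a priori reason here that rules out $\Phi_0$ being a local extremiser of $E$ on a slice transverse to rescaling; the instability is a nontrivial output of the explicit numbers $\tfrac{67}{18}>0$ and $-\tfrac{16}{9}<0$ that the paper computes. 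Until you carry out those evaluations for at least one summand of each sign (and one genuine zero mode distinct from the rescaling direction), the proof has a real gap: the existence of both positive and negative eigenvalues of $L$ is exactly the content of the theorem, and it cannot be obtained by heuristic appeal to saddle-point behaviour.
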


\begin{proof}
    Recall that the soliton $\Phi_t$ constructed in Example \ref{ExampleSU3Flow} satisfies $A_0 \diamond_0 \Phi_0 = -3\Phi_0$, where $A = (-\operatorname{Ric} + 2(\mathcal{L}_{T_8}g) + T\star T - |T|^2g + 2 \text{ div}T)$ is induced by $\Phi_0$.
    So, we consider the flow
    \begin{equation}
    \frac{\partial}{\partial t} \Phi(t) = (-\operatorname{Ric} + 2(\mathcal{L}_{T_8}g) + T\star T - |T|^2g + 2 \text{ div}T)_t \diamond_t \Phi(t) +3 \Phi(t),
\end{equation}
    for which $\Phi_0$ is a critical point, so starting this renormalised flow at $\Phi_0$ gives a stationary solution $\Phi_{0,t}$. To examine stability, we take a variation of $\Phi_0$ within the space of \Sp-structures, and study how it evolves under this flow. With this in mind, we let 
   \begin{equation}
    \begin{split}
         \Phi_{s,0} = &e^se^{1235}+e^{1248} + e^{1267}+e^{1346} +e^{1378}+e^{1457} + e^{1568}\\
        & +e^{2347} -e^{2368}-e^{2456} + e^{2578}+e^{3458}-e^{3567}-e^{-s}e^{4678},
    \end{split}
    \end{equation}
    where the terms $e^s$ and $e^{-s}$ refer to the exponential function.
    This family of \Sp-structures induces the metric
    \begin{equation}
        g_{s,0} = \operatorname{diag}(e^{s/2},e^{s/2},e^{s/2},e^{-s/2},e^{s/2},e^{-s/2},e^{-s/2},e^{-s/2}).
    \end{equation}
    We define 
    \begin{equation}
        \Psi_t = \frac{\partial \Phi_{s,t}}{\partial s}\Bigr \vert_{s=0},
    \end{equation}
    and note that 
    \[
    \Psi_0 = e^{1235}+e^{4678},
    \]
  
    which satisfies $* \Psi_0 = - \Psi_0$, so $\Phi_s$ is a deformation of $\Phi_0$ in one of the $\Omega^4_{35}$-directions (see \eqref{forms}). Note that this deformation is orthogonal to the direction $\Omega^4_1$, which corresponds to rescalings of $\Phi$.
   To study stability, we consider the evolution $\frac{\partial}{\partial t}|_{t=0} \langle\Psi_t,\Psi_t \rangle$, where $\langle \cdot,\cdot\rangle$ is the inner product on $4$-forms induced by $g_{\Phi_{0,t}}=g_{\Phi_{0,0}}$. If this quantity is negative, then, to first order, $\Phi_{s,t}$ is flowing back towards $\Phi_0$, and if it is positive then $\Phi_{s,t}$ flows away from $\Phi_0$, to first order.
   We see that
   \begin{equation}
   \begin{split}
       \frac{\partial}{\partial t}\Bigr \vert_{t=0} \langle\Psi_t,\Psi_t \rangle &= 2\left \langle \frac{\partial}{\partial t}\Bigr \vert_{t=0} \Psi_t,\Psi_0 \right \rangle\\
       &=2\left \langle \frac{\partial}{\partial t}\Bigr \vert_{t=0}\left( \frac{\partial \Phi_{s,t}}{\partial s}\Bigr \vert_{s=0} \right) ,\Psi_0 \right \rangle\\
       &= 2\left \langle \frac{\partial}{\partial s}\Bigr \vert_{s=0}\left( \frac{\partial \Phi_{s,t}}{\partial t}\Bigr \vert_{t=0} \right) ,\Psi_0 \right \rangle\\
       &= 2\left \langle \frac{\partial}{\partial s}\Bigr \vert_{s=0}\left( A_{s,0} \diamond_{s,0} \Phi_{s,0} + 3\Phi_{s,0} \right) ,\Psi_0 \right \rangle.\\
   \end{split}
    \end{equation}
    Finally, computing $\frac{\partial}{\partial s}\Bigr \vert_{s=0}\left( A_{s,0} \diamond_{s,0} \Phi_{s,0}  + 3\Phi_{s,0}\right)$ and taking the inner product results in \[\left \langle \frac{\partial}{\partial s}\Bigr \vert_{s=0}\left( A_{s,0} \diamond_{s,0} \Phi_{s,0} + 3\Phi_{s,0}  \right) ,\Psi_0 \right \rangle =  \frac{67}{18},\] which is positive.
    Thus, the soliton $\Phi_0$ is an unstable critical point for the renormalised gradient flow.

    Here, we have explicitly shown instability by showing that the deformation in the direction of the particular $\Psi_0$ above is an unstable deformation. We can repeat the same argument, starting with a deformation in six other anti-self-dual directions, by defining $\Phi_s$ as above but with the coefficients $e^s$ and $e^{-s}$ on each of the six other Hodge dual pairs of $4$-forms appearing in the expression of $\Phi_{0,0}$. For instance, to obtain a deformation in the direction of $e^{1248}+e^{3567} \in \Omega^4_{35}$, we can take
    \begin{equation}
    \begin{split}
         \Phi_{s,0} = &e^{1235}+e^se^{1248} + e^{1267}+e^{1346} +e^{1378}+e^{1457} + e^{1568}\\
        & +e^{2347} -e^{2368}-e^{2456} + e^{2578}+e^{3458}-e^{-s}e^{3567}-e^{4678}.
    \end{split}
    \end{equation}
    
    A computation gives that all seven of these directions are unstable.

    For deformations in the directions of the other $28$ anti-self-dual $4$-forms, we can proceed as follows. Take $\Phi_{0,0}$ as before and consider a deformation in the direction of the anti-self-dual $4$-form \[
    \Psi_0 = e^{1234} - e^{5678}.
    \]
    To first order\footnote{Note that this $\Phi_{s,0}$ does not define a \Sp-structure for all $s$, in contrast to our explicit deformation in the directions considered above. However, for stability, we are only interested in the first order of the deformation since we will only deal with $\frac{\partial}{\partial s}|_{s=0}\Phi_s$.}, such a deformation can be written as
    \begin{equation} 
        \Phi_{s,0} = \Phi_{0,0} + s\Psi_0,
    \end{equation}
    and the induced metric can be written as 
    \begin{equation}
        g_{s,0} = g_{0,0} + s(-e^1\circ e^7 + e^2 \circ e^6 -e^3 \circ e^8 +e^4\circ e^5),
    \end{equation}
    where $e^i \circ e^j = \frac{1}{2}(e^i \otimes e^j + e^j \otimes e^i)$ denotes the symmetric product of tensors.
    Now, starting from the initial \Sp-structure $\Phi_{s,0}$ and running the renormalised flow as above, a very similar computation gives that
    \begin{equation}
        \frac{\partial}{\partial t}\Bigr \vert_{t=0} \langle\Psi_t,\Psi_t \rangle = -\frac{16}{9},
    \end{equation}
    showing that the direction corresponding to $\Psi_0$ is a \emph{stable} direction. 
    
    The explicit deformations we have considered so far show that anti-self-dual deformations can correspond to both stable and unstable directions. The other subspace of $4$-forms in which we could deform $\Phi_0$ is the space $\Omega^4_7$, which corresponds to those deformations which do not affect the induced Riemannian metric. Because of this, one might expect that these arise as zero modes (i.e. directions that are neither stable nor unstable) and we now show that this is indeed the case.

    Given a \Sp-structure $\Phi_0$, an explicit family of \Sp-structures $\Phi_s$ corresponding to infinitesimal deformations in the $\Omega^4_7$-direction for each $s$ was given in \cite[Theorem 5.3.3]{KarigiannisDeformations}. In our case, with $\Phi_0$ as above, this becomes
    \begin{equation}
        \begin{split}
            \Phi_s = \Phi_0 &+(1- \cos(s))(-e^{2347} + e^{2368} + e^{2456} - e^{2578} - e^{1346} + e^{1378} + e^{1457} + e^{1568})\\
                            &+\sin(s)(e^{2346} + e^{2378} + e^{2457} + e^{2568} - e^{1347} - e^{1368} - e^{1456} + e^{1578}),
        \end{split}
    \end{equation}
    and, as mentioned above, the metric is not affected, so $g_s = g_0$ for all $s$.    
    Once again, repeating the computation above gives 
    \begin{equation}
        \frac{\partial}{\partial t}\Bigr \vert_{t=0} \langle\Psi_t,\Psi_t \rangle = 0,
    \end{equation}
    so this direction of deformations is a zero mode. 
    \end{proof}

    \begin{remark}
        We briefly discuss why we may have expected the deformations in the $\Omega^4_{35}$ to exhibit both stable and unstable directions. A priori, $\Lambda^4_{35}$ is isomorphic to an irreducible \Sp-representation, so we may expect there to be no further splitting, and for all $35$ directions to behave in the same way. However, since our underlying manifold is $\operatorname{SU(3)}$, we also have the presence of a natural action of $\operatorname{SU}(3)$. Recall that $\Lambda^4_{35} \cong S^2_0(\mathbb{R}^8)$. As an $\operatorname{SU}(3)$-representation, we have
        \[
        S^2_0(\mathbb{R}^8) \cong \boldsymbol{8} \oplus \boldsymbol{27},
        \]
        so there is a further decomposition that we are observing here.
    \end{remark}
    We note that this result is not necessarily unexpected. Stable solitons of geometric flows are generally quite rare; for example, a compact Ricci soliton is stable if and only if it is a local maximum for Perelman's shrinker entropy \cite{RicciSoliton}. Additionally, this instability could prove to be a desirable property in using the gradient flow to find torsion-free \Sp-structures. Indeed, one way for the (non-renormalised) gradient flow to develop a singularity would be for it to first flow to a shrinking soliton, and then shrink to a point. The instability illustrated in this example shows that this is unlikely to happen for this particular soliton, and if it did, then small perturbations of the initial condition may avoid the shrinking to a singularity.

\appendix
\section{Computational details for the gradient flow on \texorpdfstring{$\operatorname{SU}(3)$}{SU(3)}}\label{Appendix}
Here, we collect the computational details used in the derivation of the flow equation for the \Sp-structure on $\operatorname{SU}(3)$ discussed in Example \ref{ExampleSU3Flow} and the proof of Theorem \ref{ExampleSU3Unstable}. We work with the following basis of $\mathfrak{su}(3) = T_e\operatorname{SU}(3)$, as used in \cite{Fernandez}.

\[
\begin{aligned}
e_1 &= \frac{1}{\sqrt{12}}
\begin{pmatrix}
\mathrm{i} & 0 & 0\\
0 & 0 & 0\\
0 & 0 & -\mathrm{i}
\end{pmatrix},\qquad
e_2 = \frac{1}{\sqrt{12}}
\begin{pmatrix}
0 & 1 & 0\\
-1 & 0 & 0\\
0 & 0 & 0
\end{pmatrix},\qquad
e_3 = \frac{1}{\sqrt{12}}
\begin{pmatrix}
0 & 0 & 1\\
0 & 0 & 0\\
-1 & 0 & 0
\end{pmatrix},\\[8pt]
e_4 &= \frac{1}{\sqrt{12}}
\begin{pmatrix}
0 & 0 & 0\\
0 & 0 & 1\\
0 & -1 & 0
\end{pmatrix},\qquad
e_5 = \frac{1}{6}
\begin{pmatrix}
\mathrm{i} & 0 & 0\\
0 & -2\mathrm{i} & 0\\
0 & 0 & \mathrm{i}
\end{pmatrix},\qquad
e_6 = \frac{1}{\sqrt{12}}
\begin{pmatrix}
0 & \mathrm{i} & 0\\
\mathrm{i} & 0 & 0\\
0 & 0 & 0
\end{pmatrix},\\[8pt]
e_7 &= \frac{1}{\sqrt{12}}
\begin{pmatrix}
0 & 0 & \mathrm{i}\\
0 & 0 & 0\\
\mathrm{i} & 0 & 0
\end{pmatrix},\qquad
e_8 = \frac{1}{\sqrt{12}}
\begin{pmatrix}
0 & 0 & 0\\
0 & 0 & \mathrm{i}\\
0 & \mathrm{i} & 0
\end{pmatrix}.
\end{aligned}
\]
From the Koszul formula, we have that for all $v,w \in \mathfrak{su}(3) $,
\begin{equation}
    \mathrm{d}e^i(v,w) = -e^i([v,w]),
\end{equation}
where $[v,w]$ is the commutator of matrices. We obtain
\begin{align*}
\mathrm{d}e^1 &= -\frac{\sqrt{3}}{6} e^{26}
       -\frac{\sqrt{3}}{3} e^{37}
       -\frac{\sqrt{3}}{6} e^{48}, \\
\mathrm{d}e^2 &= \frac{\sqrt{3}}{6} e^{16}
       +\frac{\sqrt{3}}{6} e^{34}
       +\frac12 e^{56}
       +\frac{\sqrt{3}}{6} e^{78}, \\
\mathrm{d}e^3 &= \frac{\sqrt{3}}{3} e^{17}
       -\frac{\sqrt{3}}{6} e^{24}
       +\frac{\sqrt{3}}{6} e^{68}, \\
\mathrm{d}e^4 &= \frac{\sqrt{3}}{6} e^{18}
       +\frac{\sqrt{3}}{6} e^{23}
       -\frac12 e^{58}
       +\frac{\sqrt{3}}{6} e^{67}, \\
\mathrm{d}e^5 &= -\frac12 e^{26}
       +\frac12 e^{48}, \\
\mathrm{d}e^6 &= -\frac{\sqrt{3}}{6} e^{12}
       +\frac12 e^{25}
       -\frac{\sqrt{3}}{6} e^{38}
       -\frac{\sqrt{3}}{6} e^{47}, \\
\mathrm{d}e^7 &= -\frac{\sqrt{3}}{3} e^{13}
       -\frac{\sqrt{3}}{6} e^{28}
       +\frac{\sqrt{3}}{6} e^{46}, \\
\mathrm{d}e^8 &= -\frac{\sqrt{3}}{6} e^{14}
       +\frac{\sqrt{3}}{6} e^{27}
       +\frac{\sqrt{3}}{6} e^{36}
       -\frac12 e^{45}.
\end{align*}
Using this, we compute
\begin{align*}
\mathrm{d}\Phi &= -\frac12 e^{12345} - \frac12 e^{12348} + \frac{\sqrt{3}}{3} e^{12358} 
           - \frac{\sqrt{3}}{6} e^{12457} + \frac{\sqrt{3}}{6} e^{12458} + \frac12 e^{12467} 
           - \frac{\sqrt{3}}{6} e^{12567} + \frac{\sqrt{3}}{2} e^{12678} \\
&\quad + \frac12 e^{13457} + \frac{\sqrt{3}}{2} e^{13468} + \frac12 e^{13568} 
           - \frac{\sqrt{3}}{6} e^{13578} - \frac12 e^{14568} + \frac{\sqrt{3}}{2} e^{14578} 
           - \frac{\sqrt{3}}{6} e^{15678} - \frac12 e^{23456} \\
&\quad + \frac{\sqrt{3}}{6} e^{23457} - \frac12 e^{23468} + \frac{\sqrt{3}}{2} e^{23478} 
           + \frac{\sqrt{3}}{6} e^{23567} + \frac{\sqrt{3}}{6} e^{23568} + \frac12 e^{23578} 
           - \frac{\sqrt{3}}{6} e^{24568} - \frac12 e^{24578} \\
&\quad + \frac12 e^{34567} + \frac{\sqrt{3}}{6} e^{34578} + \frac12 e^{34678} - \frac{\sqrt{3}}{2} e^{35678} 
           - \frac{\sqrt{3}}{6} e^{45678},\\
T^1_8 &= \frac{1}{7}\left(-3e^3+3e^4-\sqrt{3}e^5 +3\sqrt{3}e^8\right),
\end{align*}
and using the method discussed in Section \ref{SectionPrelims}, we obtain 
\begin{align*}
   T &= \frac{\sqrt{3}}{48} e^1 \otimes\left(
e^{12} + e^{17} + 2 e^{18}
- 2 e^{24} + e^{26}
- e^{35} + 2 e^{37} + e^{38}
- e^{45} + e^{48}
- 2 e^{56} - e^{67}
\right)\\
& +e^2 \otimes   \bigg(-\frac{\sqrt{3}}{48} e^{13} - \frac{\sqrt{3}}{24} e^{16} - \frac{1}{16} e^{18} + \frac{1}{16} e^{24} - \frac{\sqrt{3}}{48} e^{25} - \frac{\sqrt{3}}{24} e^{34} + \frac{1}{16} e^{37} + \frac{\sqrt{3}}{48} e^{46}\\
&- \frac{1}{16} e^{56} - \frac{\sqrt{3}}{48} e^{78}  \bigg )\\
 &+  \frac{\sqrt{3}}{48} e^3 \otimes\left(
- e^{15} - 2 e^{17} + e^{18} + e^{23} + e^{24}
- 2 e^{26} - e^{37} + 2 e^{38}
- 2 e^{45} - e^{47} - e^{56} - e^{68}
\right)\\
&+  \frac{\sqrt{3}}{48}e^4 \otimes (- e^{12}- e^{15}- \sqrt{3} e^{16}- e^{18}- e^{23}- e^{24}- \sqrt{3} e^{27}+ \sqrt{3} e^{34} \\
&\hspace{1cm}+e^{35}- e^{37}- e^{47}+ e^{48}- e^{56}+ \sqrt{3} e^{58}- e^{67}+ e^{68})\\
 &+\frac{1}{16}e^5\otimes(- e^{12}+ e^{17}+ e^{26}+ e^{35}+ e^{38}- e^{45}- e^{48}+ e^{67})\\
&+\frac{\sqrt{3}}{48}e^6 \otimes (e^{12}+\sqrt{3}e^{13}-e^{15}-e^{17}+e^{23}-\sqrt{3}e^{25}+e^{26}+e^{35}\\
&\hspace{1cm}+e^{38}+e^{45}-\sqrt{3}e^{46}+e^{47}+e^{48}+e^{67}+e^{68}-\sqrt{3}e^{78}),\\
 &+ \frac{\sqrt{3}}{48}e^7 \otimes (e^{13}-e^{14}-3e^{25}+e^{28}-e^{36}+e^{46}+e^{57}+3e^{78})\\
&+\frac{\sqrt{3}}{48}e^8 \otimes(2e^{14}+e^{16}+\sqrt{3}e^{17}-\sqrt{3}e^{26}-e^{27}-e^{34}-2e^{36}+\sqrt{3}e^{38}+\sqrt{3}e^{45}+e^{58}).\\
\end{align*}

Fortunately, the expressions for the terms of the evolution equation for the gradient flow \ref{GF} simplify considerably, and we list them in Example \ref{ExampleSU3Flow}.

\printbibliography

\end{document}